\newtheorem{proposition}{Proposition}[section]
\newtheorem{theorem}[proposition]{Theorem}
\newtheorem{lemma}[proposition]{Lemma}
\newtheorem{definition}[proposition]{Definition}
\newtheorem{remark}[proposition]{Remark}
\newenvironment{proof}[1][Proof]{\noindent\textbf{#1.} }{\ \rule{0.6em}{0.6em}}
\def\0{{\bf 0}}
\title{\bf{A Parallelizable Method for Missing Internet Traffic Tensor Data}}%\thanks{This research was supported by the Hong Kong
\author{ \hspace{1mm} Chen Ling \thanks{Department of Mathematics, School of Science, Hangzhou Dianzi University, Hangzhou, 310018 China; ({\tt macling@hdu.edu.cn}). },
    \ \
Gaohang Yu\thanks{Department of Mathematics, School of Science, Hangzhou Dianzi University, Hangzhou, 310018 China; ({\tt maghyu@hdu.edu.cn}). },
\ \
Liqun Qi \thanks{Department of Applied Mathematics, The Hong Kong Polytechnic University, Hung Hom, Kowloon, Hong Kong; ({\tt liqun.qi@polyu.edu.hk}).}
\ and \
    Yanwei Xu\thanks{Future Network Theory Lab, 2012 Labs
Huawei Tech. Investment Co., Ltd, Shatin, New Territory, Hong Kong, China; ({\tt xuyanwei1@huawei.com}).}}
\begin{document}
\date{\today}
\maketitle
\begin{abstract}
Recovery of internet network traffic data from incomplete observed data is an important issue in internet network engineering and management. In this paper, by fully combining the temporal stability and periodicity features in internet traffic data, a new separable optimization model for internet data recovery is proposed, which is based upon the t-product and the rapid discrete Fourier transform of tensors. Moreover, by using generalized inverse matrices, an easy-to-operate and effective algorithm is proposed.
%Theoretically, we show that, under suitable conditions,
In theory, we prove that under suitable conditions, every accumulation point of the sequence generated by the proposed algorithm is a stationary point of the established model. Numerical simulation results carried on the widely used real-world internet network datasets, show good performance of the proposed method. In the case of moderate sampling rates, the proposed method works very well, its effect is better than that of some existing internet traffic data recovery methods in the literature. The separable structural features presented in the optimization model provide the possibility to design more efficient parallel algorithms.

\end{abstract}

\vskip 12pt \noindent {\bf Key words.} {Internet network traffic, optimization model, tensor completion, t-product, low tubal rank tensors,  %low-rank tensor representation
generalized inverse matrix, singular value decomposition, block gradient descent algorithm.}

\section{Introduction}\label{Introd}
Internet network traffic data can often  be arranged in the form of multidimensional arrays. For example, a traffic matrix is often applied to track the volume of traffic between origin-destination (OD) pairs in a network \cite{V96}. Estimating the end-to-end traffic data in a network is an essential part of many network design and traffic engineering tasks, including capacity planning \cite{Cah98}, load balancing \cite{TR13}, network provisioning \cite{RTZ03}, path setup and  anomaly detection \cite{LPCDKT03}, and failure recovery \cite{TR13}. Unfortunately, direct and precise end-to-end flow traffic measurement is very difficult or even infeasible in the traditional IP network. Missing data is unavoidable. Since many traffic engineering tasks require the complete traffic volume information or are highly sensitive to the missing data, the accurate reconstruction of missing values from partial traffic measurements becomes a key problem.%, and we refer this problem as the traffic data recovery problem.

To infer the missing data in the internet network, many research works have been developed, for instance, see \cite{ACRUW06,NJG13} and the references therein. Using optimization technology to recover incomplete internet traffic data is a very important way, which are somewhat different from the existing methods for dealing with general data recovery \cite{CCS10,CR09,CSZZC19,SLH19,YHHH16},  including Compressive Sensing (CS) \cite{Do06},  Singular Value Thresholding (SVT) algorithm \cite{CCS10} and Low-rank Matrix Fitting (LMaFit) algorithm \cite{WYZ12}, etc.
%\cite{LS01,NJG13}.
Since most of the known approaches for missing internet network data are designed based on purely spatial or purely temporal information \cite{LPCDKT03,V96,ZRLD05}, sometime their data recovery performance is low. To capture more spatial-temporal information in the traffic data, various modified matrix based models and corresponding optimization algorithms were presented to recover missing traffic data, for example, see \cite{AMDJ16,GC12,RZWQ12,TR15}. The first spatio-temporal optimization model of traffic matrices (TMs) was established in \cite{RZWQ12}, which is designed based on low-rank approximation combined with the spatio-temporal operation and local interpolation. The proposed optimization model in \cite{RZWQ12}  can be formulated as
\begin{equation}\label{MMC}
\begin{array}{ll}
{\rm min}_{L,R}&\|\mathcal{A}(LR^\top)-B\|_F^2+\lambda(\|L\|_F^2+\|R\|_F^2)\\
&+\|S(LR^\top)\|_F^2+\|(LR^\top)T^\top\|_F^2,
\end{array}
\end{equation}
where $\mathcal{A}$ is a linear operator, the matrix $B$ contains the measurements, and $S$ and $T$ are the spatial and temporal constraint matrices, respectively, which express the known knowledge about the spatio-temporal structure of the traffic matrix (e.g., temporally nearby its elements have similar values). Based on the model (\ref{MMC}), the authors applied an alternating least squares  procedure to solve it. Numerical experiments show that the proposed method in \cite{RZWQ12}  has better performance.
%When the data loss rate is not too high, the relative error rate of recovery is around ten percent.
Since then, several other matrix recovery optimization models and algorithms \cite{CQZXH14,DCYG13,GC12,LDGL13,MG13,X15,ZLNLZ17} have been proposed to recover the missing data from partial traffic or network latency measurements. Although these approaches enjoy good performance when the data missing ratio is low, their performance suffers when the missing ratio is large, especially in the extreme case when the traffic data on several time intervals are all lost \cite{XPWXWCZQ18}. In fact, such matrix based methods may %seriously
destroy the intrinsic tensor structure of high-dimensional data and also increases the computational cost of data recovery.

 In order to improve the recovery performance of the matrix-based methods mentioned above, several tensor optimization methods have been applied to recover missing traffic data, for example, \cite{ADKM11,DH15,GRY11}. The core of the tensor methods lies in the tensor decomposition, which commonly takes two forms: CANDECOMP/PARAFAC (CP) decomposition \cite{CC70,H70} and Tucker decomposition \cite{Tucker66}. For a network with a location set $\Sigma$, let cardinality $|\Sigma|=N$. As a straightforward way of modeling \cite{ZZXC15}, traffic tensor may be formed with a third order tensor $\mathcal{G}\in \mathbb{R}^{N\times N\times T}$, corresponding respectively to the origin, destination and the total number of time intervals to consider. %Let $\Sigma$ denote the non-empty set of all origins and destinations in a network, and let
 Traffic data are typically measured over some time intervals, and the value reported is an average. Therefore, the element $g_{ijk}$ in $\mathcal{G}$ is used to represent the traffic from origin $i$ to destination $j$ averaged over the time duration $[k,k+\tau)$, where $\tau$ denotes the measurement interval, and $1\leq i,j\leq N$ and $1\leq k\leq T$. After analyzing the spatial-temporal features in the traffic data, Zhou et al \cite{ZZXC15} applied the tensor completion method to recover the traffic data from partial measurements and loss. With the help of tensor CP decomposition, an optimization model with spatial-temporal constraints was proposed, whose form can be expressed as
 \begin{equation}\label{ZMMC}
\begin{array}{ll}
\displaystyle{\rm min}_{A,B,C}&\|\mathcal{W}\ast([[A,B,C]]-\mathcal{G}\|_F^2+\lambda(\|A\|_F^2+\|B\|_F^2+\|C\|_F^2)\\
&+\alpha(\|[[FA,B,C]]\|_F^2+\|[[A,GB,C]]\|_F^2+\|[[A,B,HC]]\|_F^2),
\end{array}
\end{equation}
 where $[[\cdot,\cdot,\cdot]]$ is a shorthand notation of CP decomposition of third tensors, $F$ and $G$ are the spatial constraint matrices and $H$ is the temporal constraint matrix. % which expresses our knowledge about the traffic spatio-temporal properties.
 However, such a third order tensor model cannot fully exploit the traffic periodicity in the traffic data, so the recovery accuracy is not very high. To fully exploit the traffic features of periodicity pattern, Xie et al \cite{XWWXWZCZ18} modeled the considered traffic data as another type of third order tensor $\mathcal{Z}\in\mathbb{R}^{o\times t\times d}$, where $o$ corresponds to $N\times N$ OD pairs, and there are $d$ days to consider with each day having $t$ time intervals. It is obvious that $o=N^2$ and $T=td$. In that paper, Xie et al further proposed two sequential tensor completion algorithms to recovery internet traffic data. A public traffic trace, Abilene trace data \cite{Abilene04}, was used as an example to illustrate the model established in \cite{XWWXWZCZ18}, the related traffic data was modeled as a tensor in $\mathbb{R}^{144\times 288\times 168}$. Since its sizes of three dimensions are much more balanced, the authors found that this model is better than the one in \cite{ZZXC15} for missing data recovery. However, in the case where the balance of the above three dimensions is not satisfied (e.g., $N$ is much bigger than $12$, and $t$ and $ d$ remain unchanged), the recovery performance of the corresponding models may be greatly affected.

Existing methods for missing internet network data, whether they are matrix or tensor based methods, sometimes, even when the sampling rate is not too low, the relative error rate of data recovery is still relatively high. Therefore, although various studies have been made to recover missing internet traffic data, how to choose an appropriate tensor to represent traffic data, and how to establish a related optimization model and design an efficient algorithm to solve the established model, are still main challenges in the areas of network management and internet traffic data analysis.
On the other hand, tensor t-product methods introduced by Kilmer et al \cite{KM11,KBHH13} is a powerful tool for decomposing third-order tensors, and has been well applied in  image and video inpainting, for example, see \cite{YHHH16,ZLLZ18}. However, due to the appearance of spatio-temporal features that must be considered in internet data recovery, the models and algorithms used in \cite{YHHH16,ZLLZ18} cannot be directly applied to internet data recovery.

In this paper, we present a new third-order tensor optimization method to recover missing internet traffic data. We first use a third order tensor $\mathcal{F}\in \mathbb{R}^{t\times d\times o}$ to represent the traffic data, where the means of $t$, $d$ and $o$ are stated above, i.e., $t$ represents the numbers of the observing time intervals in each day, $d$ is the number of days considered, and $o=N^2$ where two $N$ correspond respectively to the origin and destination. Based upon the third-order tensor mentioned above, we use t-product of tensors as a useful tool to establish a tensor completion model for internet traffic data recovery.  The temporal stability and periodicity characteristics  of the considered original traffic data are used to improve the established model, and the resulting optimization model has a good separation structure, which is conducive to the design of parallel algorithms and improves efficiency.   While tensor recovery methods based upon t-product decomposition are already very successful in the fields of image, video data recovery, etc.  \cite{YHHH16,ZLLZ18},  what we propose in this paper is the first spatio-temporal tensor t-product method for recovering internet traffic data.

The paper is organized as follows. We present the notation and preliminaries of tensors in Section \ref{Prelim}, which will be used throughout the paper. In Section \ref{TMRef}, we model the traffic data as a third order tensor and formulate the traffic data recovery problem as a low-rank tensor completion model. After an equivalence transformation, a separable optimization model is presented. Furthermore, a modified version of the block coordinate gradient method for missing internet network traffic data is proposed, and the convergence of this algorithm to a stationary point is analyzed in Section \ref{Algorithm}. In Section \ref{Experiments}, we conduct extensive simulation experiments to evaluate the performance of the proposed algorithm. Our experiments are performed on two real-world traffic datasets, the first is the Abilene traffic dataset, and the second is the G\'{E}ANT traffic dataset. The simulation results demonstrate that our model and algorithm can achieve significantly better performance compared with tensor and matrix completion algorithms in the literature, even when the data missing ratio is high. Conclusions and future work are discussed in Section \ref{Conclusion}.

\section{Notation and preliminaries}\label{Prelim}
In this section, we present the notation and some basic preliminaries related to the tensor, tensor t-product and real value functions of complex variables, which will be used in this paper.
\subsection{Notations}

In this paper, the fields of real numbers and complex numbers are denoted as $\mathbb{R}$ and $\mathbb{C}$, respectively. The $n\times n$ identity matrix is denoted by $I_n$. For an $m\times n$ matrix $A$, a subset $I$ of $[m]$ and a subset $J$ of $[n]$, we use the notation $A_{IJ}$ for the $|I|\times |J|$ sub-matrix obtained by deleting all rows $i\not\in I$ and all columns $j\not\in J$, and use the notation $A_{I\cdot}~(A_{\cdot J})$ for the $|I|\times n~(m\times |J|)$ sub-matrix obtained by deleting all rows $i\not\in I$ (all columns $j\not\in J$).  And the spectral norm for a given matrix $A$ is denoted by $\|A\|_2$. %For any matrix $A\in\mathbb{C}^{m\times n}$, the Moore-Penrose inverse $A^+\in\mathbb{C}^{m\times n}$ is the unique matrix that satisfies the following four properties
%$$
%(AA^+)^*=(AA^+)^*,~~(A^+A)^*=(A^+A)^*,~~AA^+A=A~~~{\and}~~A^+AA^+=A^+.
%$$
In what follows, the nomenclatures and the notations in \cite{KB09} on tensors are partially adopted. A tensor is a multidimensional array, and the order of a tensor is the number of dimensions, also called way or mode. For example, a vector is a first-order tensor, and a matrix is a second-order tensor. For the sake of brevity, in general, tensors of order $d\geq 3$ are denoted by Euler script letters $(\mathcal{A}, \mathcal{B},\ldots)$, matrices by capital letters $(A, B,\ldots)$, vectors by bold-case lowercase letters $({\bf a}, {\bf b},\ldots)$, and scalars by lowercase letters $(a,b,\ldots)$. More specifically, a real (complex) tensor of order $d\geq 3$ is represented by $\mathcal{A}\in \mathbb{R}^{m_1\times m_2\times \cdots \times m_d}~(\mathbb{C}^{m_1\times m_2\times \cdots \times m_d})$, and its $(i_1,i_2,\ldots,i_d)$-th element is represented by $a_{i_1i_2\ldots i_d}$. For any two tensors $\mathcal{A},\mathcal{B}\in \mathbb{C}^{m_1\times m_2\times \cdots \times m_d}$, the inner product between $\mathcal{A}$ and $\mathcal{B}$ is denoted as $\langle\mathcal{A},\mathcal{B}\rangle:=\sum_{i_1,i_2,\ldots,i_d}a_{i_1i_2\ldots i_d}\bar b_{i_1i_2\ldots i_d}$ where $\bar b_{i_1i_2\ldots i_d}$ is the conjugate of $b_{i_1i_2\ldots i_d}\in \mathbb{C}$ for $i_j\in[m_j]:=\{1,2,\ldots,m_j\}$ and $j\in[d]$, and the Frobenius norm associated with the above inner product is $\|\mathcal{A}\|=\sqrt{\langle\mathcal{A},\mathcal{A}\rangle}$.

For a given $d$-th order tensor $\mathcal{A}=(a_{i_1i_2\ldots i_d})\in \mathbb{C}^{m_1\times m_2\times \cdots \times m_d}$, it has $d$ modes, namely, mode-$1$, mode-$2$, $\ldots$, mode-$d$. For $k\in[d]$, denote the mode-$k$ matricization (or unfolding) of tensor $\mathcal{A}$ to be ${\rm unfold}(\mathcal{A},k)$, then the $(i_1,i_2,\ldots,i_d)$-th entry of tensor $\mathcal{A}$ is mapped to the $(i_k,j)$-th entry of matrix ${\rm unfold}(\mathcal{A},k)\in \mathbb{C}^{m_k\times \Pi_{l\neq k}m_l}$, where
$$
j=1+\sum_{1\leq l\leq d,l\neq k} (i_l-1)J_l~~~~{\rm with}~~~\displaystyle J_l=\prod_{1\leq t\leq l-1,t\neq k}m_t.
$$
The corresponding inverse operator is denoted as ``fold'', i.e., $\mathcal{A}={\rm fold}({\rm unfold}(\mathcal{A},k),k)$.

In the third order tensor case, we use the terms horizontal, lateral, and frontal slices to specify which two indices are held constant. For a given $\mathcal{A}\in \mathbb{C}^{m_1\times m_2\times m_3}$, using Matlab notation, $\mathcal{A}(i_1,:,:)$ corresponds to the $i_1$-th horizontal slice for $i_1\in [m_1]$, $\mathcal{A}(:,i_2,:)$ corresponds to the $i_2$-th lateral slice for $i_2\in [m_2]$, and $\mathcal{A}(:,:,i_3)$ corresponds the $i_3$-th frontal slice for $i_3\in [m_3]$. More compactly, $A_{i_3}$ is used to represent %X(:,:,i). For simplicity's sake, we write
$\mathcal{A}(:,:,i_3)$. %=A_{i_3}$.
It is easy to verify that ${\rm unfold}(\mathcal{A},1)=[A_1,A_2,\ldots,A_{m_3}]$ and ${\rm unfold}(\mathcal{A},2)=[A_1^\top,A_2^\top,\ldots,A_{m_3}^\top]$ for $\mathcal{A}\in\mathbb{C}^{m_1\times m_2\times m_3}$.

\subsection{t-product and t-SVD of tensors}
For a given third order tensor $\mathcal{A}\in\mathbb{R}^{m_1\times m_2\times m_3}$  with $m_1\times m_2$ frontal slices, we define the block circulant matrix ${\rm bcirc}(\mathcal{A})\in \mathbb{R}^{m_1m_3\times m_2m_3}$ as
$$
{\rm bcirc}(\mathcal{A}):=\left[
\begin{array}{cccccc}
A_1&A_{m_3}&A_{m_3-1}&\cdots&A_3&A_2\\
A_2&A_1&A_{m_3}&\cdots&A_4&A_3\\
A_3&A_2&A_1&\cdots&A_5&A_4\\
\vdots&\vdots&\vdots&\ddots&\vdots&\vdots\\
A_{m_3-1}&A_{m_3-2}&A_{m_3-3}&\cdots&A_1&A_{m_3}\\
A_{m_3}&A_{m_3-1}&A_{m_3-2}&\cdots&A_2&A_1\\
\end{array}
\right].
$$
Notice that, in this paper, we will always assume the block circulant matrix is created from the frontal slices, and thus there should be no ambiguity with the following notation.

We anchor the ``bvec'' command to the frontal slices of the tensor, that is, ${\rm bvec}(\mathcal{A})$ takes an $m_1\times m_2\times m_3$ tensor and returns a block $m_1m_3\times m_2$ matrix ${\rm bvec}(\mathcal{A}):=[A_1^\top,A_2^\top,\ldots,A_{m_3}^\top]^\top,
$
and the corresponding  inverse operation is defined as ${\rm bvfold}({\rm bvec}(\mathcal{A})):=\mathcal{A}$. Moreover, the block diagonalization operation and its inverse operation are defined as ${\rm bdiag}(\mathcal{A}):={\rm diag}(A_{(1)},A_{(2)},\ldots,A_{(m_3)})\in \mathbb{R}^{m_1m_3\times m_2m_3}$ and ${\rm bdfold}({\rm bdiag}(\mathcal{A})):=\mathcal{A}$, respectively.

\begin{definition}\label{T-ProdDef}(\cite{KBHH13}, Definition 2.5) (t-product) Let $\mathcal{A}\in R^{m_1\times t\times m_3}$ and $\mathcal{B}\in R^{t\times m_2\times m_3}$ be two real tensors. Then the t-product $\mathcal{A}*\mathcal{B}$ is an $m_1\times m_2\times m_3$ real tensor defined by
$$
\mathcal{A}*\mathcal{B}:={\rm bvfold}({\rm bcirc}(\mathcal{A})\cdot{\rm bvec(\mathcal{B})}),
$$
where ``$\cdot$'' means standard matrix product.
\end{definition}

%Let $\mathbb{R}^n$ be an $n$-dimensional Euclidean space endowed with the Euclidean inner product $\langle u,v\rangle\equiv u^\top v$ of two vectors $u,v\in \mathbb{R}^n$, where the symbol $\top$ represents the transpose.

Write $\omega=e^{2\pi {\bf i}/m_3}$ with ${\bf i}=\sqrt{-1}$. It is clear that %the following results hold: (1) $\omega^{m_3}=1$, (2) $\omega\bar\omega=1$, where $\bar \omega$ is the conjugate complex number of $\omega$, (3) $\bar \omega^k=\omega^{-k}=\omega^{m_3-k}$ for every $k\in [m_3]$, and (4) $1+\omega+\cdots +\omega^{m_3-1}$=0. Moreover, it is easy to verify the following formula
\begin{equation}\label{lsm3}
\displaystyle\sum_{k=1}^{m_3}\omega^{(k-1)(l-1)}\bar \omega^{(k-1)(s-1)}=\left\{
\begin{array}{ll}
m_3,& l=s,\\
0,& otherwise
\end{array}
\right.
\end{equation}
for any integers $l,s\in [m_3]$, where $\bar \omega$ is the conjugate complex number of $\omega$.
Denote
$$
F_{m_3}=\displaystyle\frac{1}{\sqrt{m_3}} \left[
\begin{array}{cccccc}
1&1&1&\cdots&1&1\\
1&\omega&\omega^2&\cdots&\omega^{m_3-2}&\omega^{m_3-1}\\
1&\omega^2&\omega^4&\cdots&\omega^{2(m_3-2)}&\omega^{2(m_3-1)}\\
\vdots&\vdots&\vdots&\ddots&\vdots&\vdots\\
1&\omega^{m_3-2}&\omega^{2(m_3-2)}&\cdots&\omega^{(m_3-2)(m_3-2)}&\omega^{(m_3-2)(m_3-1)}\\
1&\omega^{m_3-1}&\omega^{2(m_3-1)}&\cdots&\omega^{(m_3-2)(m_3-1)}&\omega^{(m_3-1)(m_3-1)}\\
\end{array}
\right],
$$
which is called the normalized discrete Fourier transform (DFT) matrix, and denote the conjugate transpose of $F_{m_3}$ by  $F_{m_3}^*$. %,  i.e.,
%$$
%F_{m_3}^*=\displaystyle\frac{1}{\sqrt{m_3}} \left[
%\begin{array}{cccccc}
%1&1&1&\cdots&1&1\\
%1&\bar\omega&\bar\omega^{2}&\cdots&\bar\omega^{(m_3-2)}&\bar\omega^{(m_3-1)}\\
%1&\bar\omega^{2}&\bar\omega^{4}&\cdots&\bar\omega^{2(m_3-2)}&\bar\omega^{2(m_3-1)}\\
%\vdots&\vdots&\vdots&\ddots&\vdots&\vdots\\
%1&\bar\omega^{m_3-2}&\bar\omega^{2(m_3-2)}&\cdots&\bar\omega^{(m_3-2)(m_3-2)}&\bar\omega^{(m_3-2)(m_3-1)}\\
%1&\bar\omega^{m_3-1}&\bar\omega^{2(m_3-1)}&\cdots&\bar\omega^{(m_3-2)(m_3-1)}&\bar\omega^{(m_3-1)(m_3-1)}
%\end{array}
%\right].
%$$
Just as circulant matrices can be diagonalized by the DFT \cite{GV13}, block-circulant matrices can be block diagonalized, i.e.,
\begin{equation}\label{BDFT}
(F_{m_3}\otimes I_{m_1})\cdot{\rm bicric}(\mathcal{A})\cdot(F^*_{m_3}\otimes I_{m_2})=\tilde A,
\end{equation}
where ``$\otimes$'' denotes the Kronecker product, and $\tilde A={\rm diag}(\tilde A_1,\tilde A_2,\ldots,\tilde A_{m_3})$ with $\tilde A_k$ being
\begin{equation}\label{bar BA-k}
\tilde A_{k}=\sum_{l=1}^{m_3}\omega^{(k-1)(l-1)}A_l,~~  \ \ \forall~ k\in [m_3].
\end{equation}
\begin{remark}\label{blockcon} It should be noticed that, for any $\mathcal{A}\in \mathbb{R}^{m_1\times m_2\times m_3}$, which can be block diagonalized as (\ref{BDFT}), most of the matrices $\tilde{A}_k~(k\in [m_3])$ may be complex, even when $\mathcal{A}$ is symmetric, %, because of the participating of Fourier matrix $F_{m_3}$,
and they satisfy the relationships:
\begin{equation}\label{rcm_3-k+2}
\tilde{A}_1\in \mathbb{R}^{m_1\times m_2}~~~{\rm and}~~~ \tilde{A}_k=\overline{\tilde{A}_{m_3-k+2}},~~ {\rm for~ any~} k\in [m_3]\backslash \{1\}.
\end{equation} %Notice that most of the matrices $\tilde{A}_k~(k\in [m_3])$ may be complex even when $\mathcal{A}$ is symmetric, since $A_k=A_{m_3-k+2}$ for any $k\in [m_3]\backslash \{1\}$ may not hold in this case.

On the other hand, it is easy to see that any $m_3$ tensors $\tilde{A}_k\in \mathbb{C}^{m_1\times m_2} (k\in [m_3])$, which satisfy the above relationships, can lead to $m_3$ real tensors by the inverse operation of (\ref{bar BA-k}). This fact can be seen from the following formula
\begin{equation}\label{barAAk}
A_{k}=\frac{1}{m_3}\sum_{l=1}^{m_3}\bar\omega^{(k-1)(l-1)}\tilde A_l,~~  \ \ \forall~ k\in [m_3].
\end{equation}
In fact, since $\omega^{nm_3+i}=\omega^i=\bar \omega^{m_3-i}$ for $i$ and $n\in [m_3-1]\cup \{0\}$, we can verify that, from any given $\tilde{A}_k\in \mathbb{C}^{m_1\times m_2}$ for $k\in [m_3]$, the tensors $A_k~(k\in [m_3])$ obtained by (\ref{barAAk}) are real, if and only if (\ref{rcm_3-k+2}) holds.
 \end{remark}

From Definition \ref{T-ProdDef}, it is easy to verify that the t-product of two tensors $\mathcal{C}=\mathcal{A}*\mathcal{B}$ is equivalent to $\tilde C=\tilde A\tilde B$, where $\tilde A={\rm diag}(\tilde A_1,\tilde A_2,\ldots,\tilde A_{m_3})$, $\tilde B={\rm diag}(\tilde B_1,\tilde B_2,\ldots,\tilde B_{m_3})$ and $\tilde C={\rm diag}(\tilde C_1,\tilde C_2,\ldots,\tilde C_{m_3})$, which are defined by (\ref{bar BA-k}), respectively. Moreover, by (\ref{BDFT}), it holds that $\|{\rm bicric}(\mathcal{A})\|_F=\|\tilde A\|_F$.

The identity tensor $\mathcal{I}$ of size $m_1\times m_1\times m_3$ is a tensor whose first frontal slice is a $m_1\times m_1$ identity matrix, and all other frontal slices are zero matrices. The conjugate transpose of a tensor $\mathcal{A}\in \mathbb{R}^{m_1\times m_2\times m_3}$ is a tensor in $\mathbb{R}^{m_2\times m_1\times m_3}$, denoted by $\mathcal{A}^*$, whose each frontal slices are conjugate transposed and then the order of frontal slices are reversed. A f-diagonal tensor is a tensor whose frontal slices are all diagonal matrices. A third order tensor $\mathcal{Q}$ is orthogonal, if $\mathcal{Q}*\mathcal{Q}^*=\mathcal{Q}^**\mathcal{Q}=\mathcal{I}$.

\begin{definition}\label{T-SVD}\cite{KM11} (t-SVD) A tensor $\mathcal{A}\in \mathbb{R}^{m_1\times m_2\times m_3}$ can be factored as $\mathcal{A}=\mathcal{U}\mathcal{S}\mathcal{V}^*$ where $\mathcal{U}\in \mathbb{R}^{m_1\times m_1\times m_3}$ and $\mathcal{V}\in \mathbb{R}^{m_2\times m_2\times m_3}$ are orthogonal tensors , and $\mathcal{S}\in \mathbb{R}^{m_1\times m_2\times m_3}$ is a f-diagonal tensor.
\end{definition}
For any $\mathcal{A} \in\mathbb{R}^{m_1\times m_2\times m_3}$, its tubal rank ${\rm rank}_t(A)$ is defined as the number of nonzero singular tubes of $\mathcal{S}$, i.e., ${\rm rank}_t(A)=\sharp\{i~|~\mathcal{S}(i,i,:)=0\}={\rm max}\{r_1,\ldots,r_{m_3}\}$, where $\mathcal{S}$ is from the t-SVD of $\mathcal{A}=\mathcal{U}\mathcal{S}\mathcal{V}^*$, and $r_k={\rm rank}(\tilde A_k)$ for $k\in [m_3]$.

\begin{proposition}\cite {ZLLZ18} \label{tubrank}
For any tensors $\mathcal{F} \in\mathbb{R}^{m_1\times m_2\times m_3}$, $\mathcal{A} \in\mathbb{R}^{m_1\times m_2\times m_3}$ and $\mathcal{B} \in\mathbb{R}^{m_2\times m_4\times m_3}$, the following properties hold.

(1) If ${\rm rank}_t(\mathcal{F})=r$, then $\mathcal{F}$ can be written into a tensor product form $\mathcal{F}=\mathcal{C}*\mathcal{H}$, where $\mathcal{C} \in\mathbb{R}^{m_1\times r\times m_3}$ and $\mathcal{H} \in\mathbb{R}^{r\times m_2\times m_3}$ are two tensors of smaller sizes and they meet ${\rm rank}_t(\mathcal{C})={\rm rank}_t(\mathcal{H})=r$;

(2) ${\rm rank}_t(\mathcal{A}*\mathcal{B})\leq \min\{({\rm rank}_t(\mathcal{A}),{\rm rank}_t(\mathcal{B})\}$.
\end{proposition}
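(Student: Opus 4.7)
The plan is to push both statements to the Fourier domain using the block-diagonalization identity (\ref{BDFT}), where the t-product reduces to ordinary matrix multiplication of the block-diagonal matrices $\tilde A,\tilde B,\tilde C$ built from the frontal blocks $\tilde A_k,\tilde B_k,\tilde C_k$ via (\ref{bar BA-k}). The crucial point exploited throughout is the identity $\text{rank}_t(\mathcal{A})=\max_{k\in[m_3]}\text{rank}(\tilde A_k)$ recorded just after Definition \ref{T-SVD}, together with the multiplicative formula $\tilde C_k=\tilde A_k\tilde B_k$ coming from $\mathcal{C}=\mathcal{A}*\mathcal{B}$.

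For part (1), I would invoke the t-SVD of Definition \ref{T-SVD}, write $\mathcal{F}=\mathcal{U}*\mathcal{S}*\mathcal{V}^*$, and then truncate in the Fourier domain: since each $\tilde S_k$ is an $m_1\times m_2$ diagonal matrix of rank $r_k\leq r$, I keep the first $r$ columns of $\tilde U_k$ and the first $r$ rows/columns of $\tilde S_k$ and $\tilde V_k^*$, producing $\tilde U_k^{(r)}\in\mathbb{C}^{m_1\times r}$, $\tilde S_k^{(r)}\in\mathbb{C}^{r\times r}$ and $\tilde V_k^{(r)*}\in\mathbb{C}^{r\times m_2}$ with $\tilde U_k^{(r)}\tilde S_k^{(r)}\tilde V_k^{(r)*}=\tilde F_k$. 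Folding these blocks back via (\ref{barAAk}) and setting $\mathcal{C}:=\mathcal{U}_r$ and $\mathcal{H}:=\mathcal{S}_r*\mathcal{V}_r^*$ yields the required factorization $\mathcal{F}=\mathcal{C}*\mathcal{H}$ of the stated sizes. To see that $\mathcal{C},\mathcal{H}$ are real tensors, I would verify that the truncated blocks still obey the conjugacy relation (\ref{rcm_3-k+2}), which is inherited from the corresponding relation for the full $\tilde U_k,\tilde S_k,\tilde V_k$ since truncation respects the common rank pattern $r_k=r_{m_3-k+2}$. The rank condition $\text{rank}_t(\mathcal{C})=\text{rank}_t(\mathcal{H})=r$ then follows because the columns of each $\tilde U_k^{(r)}$ are orthonormal (hence $\text{rank}(\tilde C_k)=r$ for at least one $k$ and $\leq r$ for all $k$), and symmetrically for $\mathcal{H}$ using the nonzero diagonal entries of $\tilde S_k^{(r)}$.

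For part (2), applying (\ref{BDFT}) and the t-product rule gives $\tilde C_k=\tilde A_k\tilde B_k$ for every $k\in[m_3]$, so by the standard matrix inequality $\text{rank}(\tilde A_k\tilde B_k)\leq\min\{\text{rank}(\tilde A_k),\text{rank}(\tilde B_k)\}$. Taking the maximum over $k$ and using the elementary bound $\max_k\min\{x_k,y_k\}\leq\min\{\max_k x_k,\max_k y_k\}$, I obtain
\[
\text{rank}_t(\mathcal{A}*\mathcal{B})=\max_k\text{rank}(\tilde C_k)\leq\min\Bigl\{\max_k\text{rank}(\tilde A_k),\max_k\text{rank}(\tilde B_k)\Bigr\}=\min\{\text{rank}_t(\mathcal{A}),\text{rank}_t(\mathcal{B})\},
\]
which is the desired inequality. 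The main obstacle I anticipate is the bookkeeping in part (1), specifically checking that the truncation in the Fourier domain produces tensors with real entries after applying the inverse transform (\ref{barAAk}); everything else is a direct consequence of the Fourier-domain dictionary already established in the paper.
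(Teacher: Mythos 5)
The paper does not actually prove this proposition: it is imported by citation from \cite{ZLLZ18}, so there is no in-paper argument to compare against. Your proof is correct, and it is essentially the standard argument (the one used in the cited reference): both parts reduce, via the block-diagonalization (\ref{BDFT}), to elementary matrix facts about the Fourier-domain frontal slices $\tilde A_k$, combined with the identity ${\rm rank}_t(\mathcal{A})=\max_{k}{\rm rank}(\tilde A_k)$ recorded after Definition \ref{T-SVD}. Part (2) in particular is a clean one-line consequence of ${\rm rank}(\tilde A_k\tilde B_k)\le\min\{{\rm rank}(\tilde A_k),{\rm rank}(\tilde B_k)\}$ and the elementary bound $\max_k\min\{x_k,y_k\}\le\min\{\max_k x_k,\max_k y_k\}$.

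Two small points in part (1) deserve to be made explicit. First, the truncation identity $\tilde U_k\tilde S_k\tilde V_k^*=\tilde U_k^{(r)}\tilde S_k^{(r)}\tilde V_k^{(r)*}$ requires that the $r_k$ nonzero diagonal entries of each $\tilde S_k$ occupy the leading positions; Definition \ref{T-SVD} only says $\mathcal{S}$ is f-diagonal, so you should invoke the standard construction of the t-SVD (matrix SVDs of the $\tilde F_k$ with decreasingly ordered singular values), which is also implicit in the paper's definition of ${\rm rank}_t$. Second, your conjugacy bookkeeping for realness, while correct, is unnecessary: keeping the first $r$ columns of every $\tilde U_k$ commutes with the DFT (which acts only on the third index), so $\mathcal{C}$ is simply the subtensor $\mathcal{U}(:,1:r,:)$ of the real tensor $\mathcal{U}$, and likewise $\mathcal{S}(1:r,1:r,:)$ and $\mathcal{V}(:,1:r,:)$ are real; the t-product of real tensors is real by Definition \ref{T-ProdDef}. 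With these observations your argument is complete.
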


\subsection{Real value functions of complex variable}
 For ${\bf z}=(z_1,z_2,\ldots,z_n)^\top\in \mathbb{C}^n$, denote
$$
{\bf z}_{re}:=((z_1)_{re},(z_2)_{re},\ldots, (z_n)_{re})^\top
$$
and
$$
{\bf z}_{im}:=((z_1)_{im},(z_2)_{im},\ldots, (z_n)_{im})^\top,
$$
where $(z_i)_{re}$ and $(z_i)_{im}$ are the real and imaginary part of $z_i\in \mathbb{C}$, respectively, for $i\in[n]$. We consider  a real-valued function $f:\mathbb{C}^n\rightarrow \mathbb{R}$ defined by $f({\bf z})=g({\bf z}_{re}, {\bf z}_{im})$, where ${\bf z}={\bf z}_{re}+{\bf i}~ {\bf z}_{im}$.  The following theorem can be found in \cite{B83}.

\begin{theorem}\label{PDconj}
Let $f: \mathbb{C}^n\times \mathbb{C}^n\rightarrow \mathbb{R}$  be a function of a complex vector ${\bf z}$ and its conjugate vector $\bar {\bf z}$, and let $f$ be analytic with respect to each variable (${\bf z}$ and $\bar {\bf z}$) independently. Let $g: \mathbb{R}^n\times \mathbb{R}^n\rightarrow \mathbb{R}$ be the function of the real variables ${\bf z}_{re}$ and ${\bf z}_{im}$ such that $f({\bf z},{\bf z})=g({\bf z}_{re},{\bf z}_{im})$. Then the partial derivative $\nabla_{{\bf z}}f$ (treating $\bar{\bf z}$ as a constant in $f$) gives the same result (on substituting for $\bf z$) as $(\nabla _{{\bf z}_{re}}g-{\bf i}\nabla _{{\bf z}_{re}}g)/2$. Similarly, $\nabla_{\bar{\bf z}}f$  is equivalent to $(\nabla _{{\bf z}_{re}}g+{\bf i}\nabla _{{\bf z}_{re}}g)/2$. Moreover, either of the conditions $\nabla _{\bf z}f={\bf 0}$ or $\nabla _{\bar {\bf z}}f={\bf 0}$ is necessary and sufficient to determine a stationary point of $f$.
\end{theorem}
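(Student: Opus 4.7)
The plan is to reduce everything to the standard Wirtinger change of variables
$$
{\bf z}={\bf z}_{re}+{\bf i}\,{\bf z}_{im},\qquad \bar{\bf z}={\bf z}_{re}-{\bf i}\,{\bf z}_{im},
$$
and apply the chain rule componentwise to the identity $g({\bf z}_{re},{\bf z}_{im})=f({\bf z},\bar{\bf z})$. First I would fix an index $k\in[n]$ and treat ${\bf z}$ and $\bar{\bf z}$ formally as independent complex arguments (legitimate because $f$ was assumed analytic in each slot separately). Differentiating both sides with respect to $(z_k)_{re}$ and $(z_k)_{im}$ yields the linear system
$$
\frac{\partial g}{\partial (z_k)_{re}}=\frac{\partial f}{\partial z_k}+\frac{\partial f}{\partial \bar z_k},\qquad
\frac{\partial g}{\partial (z_k)_{im}}={\bf i}\!\left(\frac{\partial f}{\partial z_k}-\frac{\partial f}{\partial \bar z_k}\right).
$$

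Second, I would invert this $2\times 2$ system at each coordinate, which is nonsingular with determinant $-2{\bf i}$. Solving gives
$$
\frac{\partial f}{\partial z_k}=\frac{1}{2}\!\left(\frac{\partial g}{\partial (z_k)_{re}}-{\bf i}\,\frac{\partial g}{\partial (z_k)_{im}}\right),\qquad
\frac{\partial f}{\partial \bar z_k}=\frac{1}{2}\!\left(\frac{\partial g}{\partial (z_k)_{re}}+{\bf i}\,\frac{\partial g}{\partial (z_k)_{im}}\right),
$$
and stacking coordinates gives the two identities claimed in the statement (the second $\nabla_{{\bf z}_{re}}g$ appearing in the quoted statement is a typo for $\nabla_{{\bf z}_{im}}g$, which will be apparent from the derivation).

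Third, for the stationarity characterization I would argue as follows. By definition, $({\bf z}_{re},{\bf z}_{im})$ is a stationary point of $g$ iff $\nabla_{{\bf z}_{re}}g={\bf 0}$ and $\nabla_{{\bf z}_{im}}g={\bf 0}$, and this is clearly equivalent, via the invertible linear correspondence above, to both $\nabla_{\bf z}f={\bf 0}$ and $\nabla_{\bar{\bf z}}f={\bf 0}$. To upgrade this to the sharper claim that \emph{either one alone} suffices, I would use that $g$ is real-valued, so $\partial g/\partial (z_k)_{re}$ and $\partial g/\partial (z_k)_{im}$ are real; conjugating the first identity then shows $\overline{\nabla_{\bf z}f}=\nabla_{\bar{\bf z}}f$, so the vanishing of one forces the vanishing of the other, and then the real and imaginary parts of either side give $\nabla_{{\bf z}_{re}}g={\bf 0}=\nabla_{{\bf z}_{im}}g$.

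The only delicate point I anticipate is interpreting the hypothesis ``analytic with respect to ${\bf z}$ and $\bar{\bf z}$ independently'' for a real-valued $f$, since $f$ is certainly not holomorphic in ${\bf z}$ alone. The standard resolution, which I would invoke explicitly, is that the hypothesis refers to a holomorphic extension $\tilde f({\bf u},{\bf v})$ of $f$ to a function of two independent complex vectors $({\bf u},{\bf v})\in\mathbb{C}^n\times\mathbb{C}^n$ with $f({\bf z},\bar{\bf z})=\tilde f({\bf z},\bar{\bf z})$, which exists because $g$ is (real-)analytic in $({\bf z}_{re},{\bf z}_{im})$. Once this formal setup is in place, everything else is the purely algebraic chain-rule computation above.
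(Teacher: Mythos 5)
Your proof is correct. Note that the paper itself gives no proof of this theorem: it is quoted (typos and all) from Brandwood \cite{B83}, and your derivation is precisely the standard argument behind that reference --- differentiate $g({\bf z}_{re},{\bf z}_{im})=f({\bf z},\bar{\bf z})$ by the chain rule to get the $2\times 2$ system relating $(\nabla_{{\bf z}_{re}}g,\nabla_{{\bf z}_{im}}g)$ to $(\nabla_{{\bf z}}f,\nabla_{\bar{\bf z}}f)$, invert it (determinant $-2{\bf i}$), and use real-valuedness of $g$ to get $\overline{\nabla_{{\bf z}}f}=\nabla_{\bar{\bf z}}f$, so that the vanishing of either gradient alone characterizes stationarity. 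You are also right that the second occurrence of $\nabla_{{\bf z}_{re}}g$ in each displayed formula of the statement is a typo for $\nabla_{{\bf z}_{im}}g$, and your explicit handling of what ``analytic in ${\bf z}$ and $\bar{\bf z}$ independently'' means (a holomorphic extension to two independent complex arguments) is more careful than anything in the paper.
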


\section{Tensor model of internet traffic data completion and its reformulation}\label{TMRef}
\subsection{Model}\label{Mod}
Let $\mathcal{G}=(g_{i_1i_2i_3})$ be a given incomplete tensor in $\mathbb{R}^{m_1\times m_2\times m_3}$, and $\Omega$ be the index set of known entries of $\mathcal{G}$, i.e., its entries $g_{i_1i_2i_3}$ are given for $(i_1,i_2,i_3)\in \Omega$ while $g_{i_1i_2i_3}$ are missing for $(i_1,i_2,i_3)\not\in \Omega$. Here, $m_1$, $m_2$ and $m_3$ are corresponding to node $t$, $d$ and $o$ of the collected internet network traffic data in Section \ref{Introd}, respectively. We model the traffic data in an internet network by a tensor $\mathcal{F}\in\mathbb{R}^{m_1\times m_2\times m_3}$. Based on the incomplete tensor $\mathcal{G}$, in order to recover internet network traffic data (i.e., $\mathcal{F}$), we consider the following low rank tensor optimization problem
\begin{equation}\label{Tubrank-optim}
\begin{array}{rl}
\displaystyle\min_{\mathcal{F}}&{\rm rank}_t(\mathcal{F})\\
\text{s.t.}&P_{\Omega}(\mathcal{F})=P_{\Omega}(\mathcal{M}),
\end{array}
\end{equation}
where ${\rm rank}_t (\mathcal{F})$ denotes the tensor tubal rank of $\mathcal{F}$, and $P_{\Omega}$ is the linear operator to extract known elements in the subset $\Omega$ and fills the elements that are not in $\Omega$ with zero values.

The target of this paper is to estimate those elements with zero values as accurately as possible. To solve this problem, a tensor factorization method was proposed to optimize this problem by factorizing tensor $\mathcal{F}\in \mathbb{R}^{m_1\times m_2\times m_3}$ into two smaller tensors $\mathcal{X}\in \mathbb{R}^{m_1\times r\times m_3}$ and $\mathcal{Y}\in \mathbb{R}^{r\times m_2\times m_3}$, where $r$ is the beforehand estimated tubal rank of $\mathcal{F}$ and is usually much smaller than ${\min}\{m_1,m_2\}$.  According to Proposition \ref{tubrank}, we further approximate the model (\ref{Tubrank-optim}) by introducing an intermediate variable $\mathcal{W}\in \mathbb{R}^{m_1\times m_2\times m_3}$ as follows
\begin{equation}\label{ETP-optim}
\begin{array}{rl}
\displaystyle\min_{\mathcal{X},\mathcal{Y},\mathcal{W}}&\displaystyle\frac{1}{2}\|\mathcal{X}*\mathcal{Y}-\mathcal{W}\|_F^2+\frac{\mu}{2}\|\mathcal{W}\|_F^2\\%+\frac{\rho}{2}\big\{\|H\cdot{\rm unfold}(\mathcal{X}*\mathcal{Y},1)\|_F^2+\|K\cdot{\rm unfold}(\mathcal{X}*\mathcal{Y},2)\|_F^2\big\}\\
\text{s.t.}&P_{\Omega}(\mathcal{W})=P_{\Omega}(\mathcal{G})\\
&\mathcal{X}\in \mathbb{R}^{m_1\times r\times m_3},\mathcal{Y}\in \mathbb{R}^{r\times m_2\times m_3},\mathcal{W}\in \mathbb{R}^{m_1\times m_2\times m_3}.
\end{array}
\end{equation}
where $\mu$ is an regularization parameter, which allows a tunable tradeoff between fitting error and achieving tensor low-rank.

\subsection{Improvement with temporal  stability}
In real-world network, most of traffic data often have considerably large difference in start sampling time and end sampling time, but every successive time intervals the sampling data have pretty small difference. That is, traffic usually change slowly over time, which exhibit temporal
stability feature in time dimension. We use matrices $H$ and $K$ to express our knowledge about the traffic temporal properties, in which, $H$ is used to capture the stability of traffic values at two adjacent time slots, and $K$ is used to express the periodicity of traffic data, i.e.,  the similarity in internet visiting behaviors at the same time of different days, such as the similar traffic mode in working hours and sleeping hours. %Therefore, at a single measurement interval, the traffic data are close or similar to each other, which exhibit spatial correlation feature \cite{RZWQ12,TR15,XWWXWZ16,XWWXZXW15,ZZXC15,ZZXW14}. %We use matrices $A_i~(i=1,2,3)$ to express our knowledge about the traffic spatio-temporal properties, in which, $A_1$ and $A_2$ are the spatial matrices, and $A_3$ and $A_4$ are the temporal constraint matrices.

We first consider for setting $H$. The temporal constraint matrix $H$ captures temporal stability feature of the traffic tensor, i.e., the traffic data is similar at adjacent time slots. Based on time dimension (mode-$1$) unfolding matrix ${\rm unfold}(\mathcal{X}*\mathcal{Y},1)$ or ${\rm unfold}(\mathcal{W},1)$, a simple choice for the temporal constraint matrix is $H= {\rm Toeplitz}(0,1,-1)$ of the size $(m_1-1)\times m_1$, which denotes the Toeplitz matrix with central diagonal given by ones, and the first upper diagonal given by negative ones, i.e.,
\begin{equation}\label{Toeplitz}
H=\left[
\begin{array}{cccccc}
1&-1&0&\cdots&0&0\\
0&1&-1&\cdots&0&0\\
0&0&1&\cdots&0&0\\
\vdots&\vdots&\vdots&\ddots&\vdots&\vdots\\
0&0&0&\cdots&1&-1
\end{array}
\right]_{(m_1-1)\times m_1}.
\end{equation}

By using the temporal constraint matrix $H$ mentioned above, we can check that
$$
\begin{array}{l}
\left\|H\cdot{\rm unfold}(\mathcal{X}*\mathcal{Y},1)\right\|_F^2\\
\displaystyle=\sum_{1\leq i_1\leq m_1-1,1\leq i_2\leq m_2,1\leq i_3\leq m_3}
\big((\mathcal{X}*\mathcal{Y})_{i_1i_2i_3}-\left(\mathcal{X}*\mathcal{Y}\right)_{(i_1+1)i_2i_3}\big)^2,
\end{array}$$
which means that by minimizing $\|H\cdot{\rm unfold}(\mathcal{X}*\mathcal{Y},1)\|_F^2$ or $\|H\cdot{\rm unfold}(\mathcal{W},1)\|_F^2$, we seek an approximation that also has the property of having similar temporally adjacent values, i.e, the fact that traffic tensor at adjacent points in time of same day are often similar. Similarly, we use a simple Toeplitz matrix
$$
K=\left[
\begin{array}{cccccc}
1&-1&0&\cdots&0&0\\
0&1&-1&\cdots&0&0\\
0&0&1&\cdots&0&0\\
\vdots&\vdots&\vdots&\ddots&\vdots&\vdots\\
0&0&0&\cdots&1&-1
\end{array}
\right]_{(m_2-1)\times m_2}
$$
to capture the periodicity of traffic data. Therefore, by minimizing $\|H\cdot{\rm unfold}(\mathcal{X}*\mathcal{Y},1)\|_F^2+\|K\cdot{\rm unfold}(\mathcal{W},2)\|_F^2$, we may approximate the temporal stability feature of $\mathcal{X}*\mathcal{Y}$ (i.e., $\mathcal{W}$) and are expected to improve recovery accuracy.

Based on the argument above, the model (\ref{ETP-optim}) can be improved into the following optimization model
\begin{equation}\label{TETP-optim}
\begin{array}{rl}
\min&\displaystyle f_0(\mathcal{X},\mathcal{Y},\mathcal{W}):=\frac{1}{2}\|\mathcal{X}*\mathcal{Y}-\mathcal{W}\|_F^2+\frac{\mu}{2}\|\mathcal{W}\|_F^2\\
&+\displaystyle\frac{\rho_1}{2}\|H\cdot{\rm unfold}(\mathcal{X}*\mathcal{Y},1)\|_F^2+\frac{\rho_2}{2}\|K\cdot{\rm unfold}(\mathcal{W},2)\|_F^2\\
\text{s.t.}&P_{\Omega}(\mathcal{W})=P_{\Omega}(\mathcal{G})\\
&\mathcal{X}\in \mathbb{R}^{m_1\times r\times m_3},\mathcal{Y}\in \mathbb{R}^{r\times m_2\times m_3},\mathcal{W}\in \mathbb{R}^{m_1\times m_2\times m_3},
\end{array}
\end{equation}
where $\rho_1$ and $\rho_2$ are two appropriately chosen penalty parameters.
\subsection{Reformulation and simplification of model}\label{Reformu}
In this subsection, we continue to consider how to transform the model (\ref{TETP-optim}) into an equivalent form that is conducive to effective algorithm design.  For simplicity, let us write $\mathcal{Z}=\mathcal{X}*\mathcal{Y}$. %It is clear that $\mathcal{Z}\in \mathbb{R}^{n_1\times n_2\times n_3}$. Moreover,
It is obvious that
\begin{equation}\label{bcrWZ}
(F_{m_3}\otimes I_{m_1})\cdot{\rm bcric}(\mathcal{Z}-\mathcal{W})\cdot(F^*_{m_3}\otimes I_{m_2})=\tilde Z-\tilde W,
\end{equation}
where $\tilde Z={\rm diag}(\tilde Z_1,\tilde Z_2,\ldots,\tilde Z_{m_3})$ and $\tilde W={\rm diag}(\tilde W_1,\tilde W_2,\ldots,\tilde W_{m_3})$, which are defined by (\ref{bar BA-k}), respectively. Consequently, by (\ref{bcrWZ}), we have
$$
\|\mathcal{Z}-\mathcal{W}\|_F^2=\frac{1}{m_3}\|{\rm bcric(\mathcal{Z}-\mathcal{W})}\|_F^2=\frac{1}{m_3}\|\tilde Z-\tilde W\|_F^2=\frac{1}{m_3}\sum_{k=1}^{m_3}\|\tilde Z_k-\tilde {W}_k\|_F^2.
$$
Since $\mathcal{Z}=\mathcal{X}*\mathcal{Y}$, which is equivalent to $\tilde Z=\tilde X\tilde Y$, i.e., $\tilde Z_k=\tilde X_k\tilde  Y_k$ for every $k\in [m_3]$, it holds that
\begin{equation}\label{XYZ}
\|\mathcal{X}*\mathcal{Y}-\mathcal{W}\|_F^2=\frac{1}{m_3}\sum_{k=1}^{m_3}\|\tilde X_k\tilde Y_k-\tilde W_k\|_F^2.
\end{equation}
Similarly, we have
\begin{equation}\label{tildeXY}
\|\mathcal{W}\|_F^2=\frac{1}{m_3}\sum_{k=1}^{m_3}\|\tilde W_k\|_F^2.%~~{\rm and}~~\|\mathcal{Y}\|_F^2=\frac{1}{m_3}\sum_{k=1}^{m_3}\|\tilde Y_k\|_F^2.
\end{equation}

Now we consider $\|H\cdot {\rm unfold}(\mathcal{X}*\mathcal{Y},1)\|_F^2$. Since ${\rm unfold}(\mathcal{Z},1)=[Z_1,Z_2,\ldots,Z_{m_3}]$, it holds that
\begin{equation}\label{HunfZ1}
\|H\cdot {\rm unfold}(\mathcal{Z},1)\|_F^2=\|[HZ_1,HZ_2,\ldots,HZ_{m_3}]\|_F^2=\sum_{k=1}^{m_3}\|HZ_k\|_F^2.
\end{equation}
On the other hand, by (\ref{bar BA-k}), we have
\begin{equation}\label{ZbarZ}
\begin{array}{lll}
\displaystyle\sum_{k=1}^{m_3}\|H\tilde{Z}_k\|^2_F&=&\displaystyle\sum_{k=1}^{m_3}{\rm tr}((H\tilde{Z}_k)(H\tilde{Z}_k)^*)\\
&=&\displaystyle\displaystyle\sum_{k=1}^{m_3}\sum_{s,l=1}^{m_3}\omega^{(k-1)(l-1)}\bar \omega^{(k-1)(s-1)}{\rm tr}((HZ_k)(HZ_k)^\top)\\
&=&\displaystyle\displaystyle\sum_{s,l=1}^{m_3}\sum_{k=1}^{m_3}\omega^{(k-1)(l-1)}\bar \omega^{(k-1)(s-1)}{\rm tr}((HZ_k)(HZ_k)^\top).
\end{array}\end{equation}
By (\ref{lsm3}) and (\ref{ZbarZ}), we further obtain
\begin{equation}\label{ZbarZ-1}
\displaystyle\sum_{k=1}^{m_3}\|H\tilde{Z}_k\|^2_F=m_3\displaystyle\sum_{l=1}^{m_3}{\rm tr}((HZ_l)(HZ_l)^\top)=m_3\displaystyle\sum_{l=1}^{m_3}\|HZ_l\|_F^2.
\end{equation}
Consequently, by combining (\ref{HunfZ1}) and (\ref{ZbarZ-1}), it holds that
\begin{equation}\label{HunfZ2}
\|H\cdot {\rm unfold}(\mathcal{Z},1)\|_F^2=\frac{1}{m_3}\sum_{k=1}^{m_3}\|H\tilde{Z}_k\|_F^2=\frac{1}{m_3}\sum_{k=1}^{m_3}\|H\tilde X_k \tilde Y_k\|_F^2.
\end{equation}

Similarly, we have
\begin{equation}\label{KunfZ}
\|K\cdot {\rm unfold}(\mathcal{W},2)\|_F^2=\frac{1}{m_3}\sum_{k=1}^{m_3}\|\tilde{W}_kK^\top\|^2_F=\frac{1}{m_3}\sum_{k=1}^{m_3}\|\tilde W_k K^\top\|^2_F.
\end{equation}
By (\ref{XYZ}), (\ref{tildeXY}), (\ref{HunfZ2}) and (\ref{KunfZ}), we have
\begin{equation}\label{fun-XYW}
\begin{array}{l}
f_0(\mathcal{X},\mathcal{Y},\mathcal{W})\\
\displaystyle=\frac{1}{2m_3}\sum_{k=1}^{m_3}\left\{\|\tilde X_k \tilde Y_k-\tilde W_{k}\|_F^2+\rho_1\|H \tilde X_k \tilde Y_k\|^2_F+\rho_2\|\tilde W_kK^\top\|^2_F+\mu\|\tilde W_k\|^2_F\right\}.
\end{array}\end{equation}

Due to the special structure of equivalent reformulation (\ref{fun-XYW}) of the objective function in (\ref{TETP-optim}), our algorithm, which will be described in the next section, is closely related to the following two type of optimization sub-problems.
\begin{equation}\label{kXYETP-optim20}
\begin{array}{cl}
\displaystyle\min_{\tilde X_k,\tilde Y_k}&\displaystyle\sum_{k=1}^{m_3}h_k(\tilde X_k, \tilde Y_k)\\
\text{s.t.}&\tilde X_k\in \mathbb{C}^{m_1\times r},\tilde{Y}_k\in \mathbb{C}^{r\times m_2}, k\in[m_3],
\end{array}
\end{equation}
where $h_k(\tilde X_k \tilde Y_k)=\frac{1}{2}(\|\tilde X_k \tilde Y_k-\tilde W_{k}\|_F^2+\rho_1\|H \tilde X_k \tilde Y_k\|^2_F)$, and
\begin{equation}\label{WETP-optim20}
\begin{array}{cl}
\displaystyle\min_{W_k}&\displaystyle\sum_{k=1}^{m_3}g_k(W_k)\\
\text{s.t.}&P_{\Omega_k}(W_k)=P_{\Omega_k}(G_k),\\
&W_k\in \mathbb{R}^{m_1\times m_2},k\in [m_3],
\end{array}
\end{equation}
where $g_{k}(W_k)=\frac{1}{2}(\|Z_k-W_{k}\|_F^2+\rho_2\|W_k K^\top\|^2_F+\mu\|W_k\|^2_F)$ and $\Omega_k=\{(i,j)~|~(i,j,k)\in \Omega)\}$. %Here, $\tilde W_k$ and $Z_k$ $(k=1,2,\ldots, m_3)$ are known in solving (\ref{kXYETP-optim2}) and (\ref{WETP-optim2}).

Notice that the models (\ref{kXYETP-optim20}) and (\ref{WETP-optim20}) have both very good separable structure. It is clear that the optimal solution set of (\ref{TETP-optim}) is nonempty, which is denoted by $\Xi_*$, and the corresponding optimal value is denoted by $f_{0*}$, since the objective function value is bounded below with zero. Moreover, we see that $(\mathcal{X}_{*},\mathcal{Y}_{*},\mathcal{W}_{*})$ is a optimal solution of (\ref{TETP-optim}), if and only if that $(\tilde{X}_{*},\tilde{Y}_{*})$ is the optimal solution of (\ref{kXYETP-optim20}) with $\tilde{W}_{k}=\tilde{W}_{k*}$ for $k\in[m_3]$, where $\tilde{X}_{*}=(\tilde{X}_{1*},\tilde{X}_{2*},\ldots,\tilde{X}_{m_3*})$, $\tilde{Y}_{*}=(\tilde{Y}_{1*},\tilde{Y}_{2*},\ldots,\tilde{Y}_{m_3*})$ and $\tilde{W}_{*}=(\tilde{W}_{1*},\tilde{W}_{2*},\ldots,\tilde{W}_{m_3*})$ are obtain by (\ref{bar BA-k}) from $X_{k*}~(k\in [m_3])$, $Y_{k*}~(k\in [m_3])$ and $W_{k*}~(k\in [m_3])$ respectively, and $W_{*}=(W_{1*},W_{2*},\ldots,W_{m_3*})$ is the optimal solution of (\ref{WETP-optim20}) with objective function $\sum_{k=1}^{m_3}g_{k*}(W_k)=\sum_{k=1}^{m_3}\frac{1}{2}(\|Z_{k*}-W_{k}\|_F^2+\rho_2\|W_k K^\top\|^2_F+\mu\|W_k\|^2_F)$ with $Z_{k*}=\mathcal{Z}_*(:,:,k)$ and $\mathcal{Z}_*=\mathcal{X}_**\mathcal{Y}_*$. Here, $W_{*}$ and $\tilde W_{*}$ satisfies (\ref{bar BA-k}) or equivalently (\ref{barAAk}). Therefore, how to solving optimization problem (\ref{TETP-optim}) effectively comes down to solving the following two type of sub-problems
\begin{equation}\label{SubkXYETP-optim2}
\begin{array}{cl}
\displaystyle\min_{\tilde X_k,\tilde Y_k}&\displaystyle h_k(\tilde X_k, \tilde Y_k)\\
\text{s.t.}&\tilde X_k\in \mathbb{C}^{m_1\times r},\tilde{Y}_k\in \mathbb{C}^{r\times m_2},
\end{array}
\end{equation}
for $k\in[m_3]$,
and
\begin{equation}\label{SunkWETP-optim2}
\begin{array}{cl}
\displaystyle\min_{W_k}&\displaystyle g_k(W_k)\\
\text{s.t.}&P_{\Omega_k}(W_k)=P_{\Omega_k}(G_k),\\
&W_k\in \mathbb{R}^{m_1\times m_2}
\end{array}
\end{equation}
for $k\in[m_3]$. Notice that (\ref{SubkXYETP-optim2}) and (\ref{SunkWETP-optim2}) are independent, but for different $k$ they have the same form, respectively. In particular, for the subproblem (\ref{SunkWETP-optim2}), once $\Omega$ is given, $\Omega_k$ is unchanged for $k\in[m_3]$, which implies that the simple equality constraints in (\ref{SunkWETP-optim2}) can be eliminated by substituting them into the objective function. At this time, for each $k\in[m_3]$, if the complement set $\Omega_k^C$ of $\Omega_k$ is non-empty, then the objective function in the resulting unconstrained optimization is a strictly convex quadratic function with respect to the variable $(W _k)_{\Omega_k^C}$, and its coefficient matrix is a positive definite matrix independent of $Z_k$. Hence, for every $k\in [m_3]$ and known $Z_k\in \mathbb{R}^{m_1\times m_2}$, $W _k$ is unique optimal solution of (\ref{SunkWETP-optim2}), if and only if the following KKT condition holds
\begin{equation}\label{subquKKT}
\left\{
\begin{array}{ll}
(W_kK_{\mu\rho_2})_{ij}=(Z_k)_{ij},&{\rm if~} (i,j)\not\in \Omega_k,\\
(W_k)_{ij}=(G_k)_{ij},&{\rm otherwise},
\end{array}
\right.
\end{equation}
where $K_{\mu\rho_2}=(1+\mu)I+\rho_2K^\top K$.
Moreover, denote $\Omega_{ik}=\{j\in [m_2]~|~(i,j,k)\in \Omega~\}$ for every $i\in [m_1]$ and $k\in [m_3]$. Corresponding to $\Omega_{ik}$ and $\Omega^C_{ik}$, the $i$th row vector $(W_k)_{i\cdot}$ of $W_k$ is divided into two blocks, denoted as $(W_k)_{i\cdot}=[(W_k)_{i\Omega_{ik}},(W_k)_{i\Omega^C_{ik}}]$ (the elements in $(W_k)_{i\cdot}$ need to be rearranged if necessary), and accordingly, $K$ is divided into a column-block matrix, i.e., $K=[K_{ik1},K_{ik2}]$. Due to the special structure of (\ref{subquKKT}), we can get the following expression of its solution
\begin{equation}\label{Wkiom}
\left\{
\begin{array}{ll}
(W_k)_{i\Omega^C_{ik}}=\big((Z_k)_{i\Omega^C_{ik}}-\rho_2(G_k)_{i\Omega_{ik}}K^\top_{ik1}K_{ik2}\big)\big((1+\mu)I_{|\Omega^C_{ik}|}+\rho_2K^\top_{ik2}K_{ik2}\big)^{-1},\\
(W_k)_{i\Omega_{ik}}=(G_k)_{i\Omega_{ik}}
\end{array}\right.
\end{equation}
for $i\in [m_1]$ and $k\in [m_3]$.

In the next section, we will propose an algorithm for completing internet traffic data, which is based upon these separable characteristics discussed above.

\section{An algorithm for internet traffic tensor completion problem}\label{Algorithm}
\subsection{Algorithmic description}\label{AlgDes}
In this subsection, the method for internet traffic tensor completion problem in this paper is described in detail.

By a direct computation, we know that, for every $k\in [m_3]$, the partial derivatives $\nabla _{\tilde{X}_k}h_k$ and  $\nabla _{\tilde{Y}_k}h_k$ of the function $h_k$ defined in (\ref{kXYETP-optim20}) (respectively, treating the conjugate complex matrices $\overline{\tilde{X}_k}$ and $\overline{\tilde{Y}_k}$  of $\tilde{X}_k$ and $\tilde{Y}_k$ as constant in $h_k$) are
\begin{equation}\label{hkXYDer}
\left\{\begin{array}{lll}
\nabla _{\tilde X_k}h_k(\tilde X_k,\tilde Y_k)&=&(\tilde X_k \tilde Y_k-\tilde W_k)\tilde Y_k^*+\rho_1H^\top H \tilde X_k \tilde Y_k\tilde Y_k^*,\\
\nabla _{\tilde Y_k}h_k(\tilde X_k,\tilde Y_k)&=&\tilde X_k^*(\tilde X_k \tilde Y_k-\tilde W_k)+\rho_1\tilde X_k^*H^\top H \tilde X_k \tilde Y_k.
%\nabla _{\tilde W}f(\tilde X,\tilde Y,\tilde W)&=&\overline{\tilde W-\tilde X \tilde Y+\rho_2\tilde W K^\top K+\mu\tilde W}.
\end{array}\right.
\end{equation}
Consequently, by Theorem \ref{PDconj}, the KKT system of (\ref{kXYETP-optim20}) can be expressed as follows
\begin{equation}\label{KKTcondition}
\left\{\begin{array}{l}
H_{\rho_1}\tilde X_k (\tilde Y_k\tilde Y_k^*)=\tilde W_k\tilde Y_k^*,\\
(\tilde X_k ^* H_{\rho_1}\tilde X_k)\tilde Y_k=\tilde X_k ^*\tilde W_k,
%P_{\Omega^C}(\tilde WK_{\mu\rho_2}-\tilde X \tilde Y)=0,\\
%P_{\Omega}(\tilde W-\tilde G)=0,\\
\end{array}
\right.
~~~k\in [m_3],
\end{equation}
where $H_{\rho_1}=I+\rho_1H^\top H$, and $\tilde W_k~(k\in [m_3])$ have obtained before solving problem (\ref{kXYETP-optim20}).

Inspired by the application of the generalized inverse matrix in the (approximation) solution of general matrix linear equations, we propose the iterative scheme for solving the model (\ref{kXYETP-optim20}) as follows
\begin{equation}\label{updataXYW1}
\left\{
\begin{array}{lll}
\tilde X_k^{(l+1)}&=&\tilde X_k^{(l)}+(\alpha_{kl}/L^{(l)}_{k1})d_{\tilde X_k}^{(l)},\\
\tilde Y_k^{(l+1)}&=&\tilde Y_k^{(l)}+(\beta_{kl}/L^{(l)}_{k2})d_{\tilde Y_k}^{(l)},\\
%\tilde W^{(l+1)}&=&\tilde W^{(l)}+(\gamma_{l}/L_{3})d_{\tilde W}^{(l)},
\end{array}\right. ~~~~~~~k\in [m_3],
\end{equation}
where $\alpha_{kl}$ and $\beta_{kl}$ are the step-sizes in the linear search of $\tilde X_k$ and $\tilde Y_k$ at $l$th iteration, respectively, $L^{(l)}_{k1}=\|H_{\rho_1}\|_2\|\tilde Y_k^{(l)}(\tilde Y_k^{(l)})^*\|_2$, $L^{(l)}_{k2}=\|(\tilde X_k^{(l+1)})^*H_{\rho_1}\tilde X_k^{(l+1)}\|_2$, and
$$
\left\{
\begin{array}{lll}
d_{\tilde X_k}^{(l)}&=&-H_{\rho_1}^{-1}\nabla_{\tilde X_k}h_k(\tilde X_k^{(l)},\tilde Y_k^{(l)})\big(\tilde Y_k^{(l)}(\tilde Y_k^{(l)})^*\big)^{+},\\
d_{\tilde Y_k}^{(l)}&=&-\big((\tilde X_k^{(l+1)})^*H_{\rho_1}\tilde X_k^{(l+1)}\big)^{+}\nabla_{\tilde Y_k}h_k(\tilde X_k^{(l+1)},\tilde Y_k^{(l)}).\\
%d_{\tilde W}^{(l)}&=&-\nabla_{\tilde W}f(\tilde X^{(l+1)},\tilde Y^{(l+1)},\tilde W^{(l)})K_{\mu\rho_2}^{-1}.
\end{array}
\right.$$
Here, $A^+$ denotes the Moore-Penrose generalized inverse of $A\in\mathbb{C}^{m\times n}$ \cite{M20,P55}.

After obtaining  $\tilde X_k^{(l+1)}$ and $\tilde Y_k^{(l+1)}$ for all $k\in [m_3]$, we further obtain $\tilde Z_k^{(l+1)}=\tilde X_k^{(l+1)}\tilde Y_k^{(l+1)}$, and hence $Z_k^{(l+1)}$ by (\ref{barAAk}) for all $k\in [m_3]$. Then, for every $k\in [m_3]$ we obtain $W_k^{(l+1)}$ by solving the constrained quadratic programming (\ref{WETP-optim20}), or equivalently (\ref{SunkWETP-optim2}) with $Z_k=Z^{(l+1)}_k$. From the obtained $W_k^{(l+1)}~(k\in [m_3])$, the complex matrices $\tilde W_k^{(l+1)}~k\in [m_3]$ are further obtained by (\ref{bar BA-k}).

The above steps are repeated  until the required solution is found. This idea is embodied in our algorithm, which is named TCTF2R Algorithm and described as follows

\noindent\rule{\textwidth}{1pt}
\textbf{TCTF2R Algorithm} (Tensor Completion by Tensor Factorization
with Spatio-Temporal Regularization) \\
\noindent\rule{\textwidth}{0.5pt}
\textbf{Input:} The tensor data $\mathcal{G}\in \mathbb{R}^{m_1\times m_2\times m_3}$, the observed set $\Omega$, the temporal constraint matrices $H,K$, the initialized rank vector $r^0\in\mathbb{R}^{m_3}$, the regularization parameters $\rho_1,\rho_2, \mu>0$, and $\varepsilon=1e-6$.\\
\textbf{Initialize:} $\tilde X^{(0)}=(\tilde X_1^{(0)},\tilde X_2^{(0)},\ldots,\tilde X_{m_3}^{(0)})$, $\tilde Y^{(0)}=(\tilde Y_1^{(0)},\tilde Y_2^{(0)},\ldots,\tilde Y_{m_3}^{(0)})$ and $\tilde W^{(0)}=(\tilde W_1^{(0)},\tilde W_2^{(0)},\ldots,\tilde W_{m_3}^{(0)})$, where $\tilde X_k^{(0)}\in \mathbb{C}^{m_1\times r_k^0}$, $\tilde Y_k^{(0)}\in \mathbb{C}^{r_k^0\times m_2}$ and $\tilde W_k^{(0)}\in \mathbb{C}^{m_1\times m_2}$ for $k\in [m_3]$, and the relationship (\ref{rcm_3-k+2}) for $\tilde X^{(0)}$, $\tilde Y^{(0)}$ and $\tilde W^{(0)}$  hold, respectively.\\
%\textbf{DFT transform:} Convert $X_k^{(l)}$, $Y_k^{(l)}$ into $\tilde X_k^{(l)}\in \mathbb{C}^{m_1\times r^0}$, $\tilde Y_k^{(l)}\in \mathbb{C}^{r^0\times m_2}$, respectively, via (\ref{bar BA-k}) for $k\in [m_3]$.\\
\textbf{While not converge do}\\
1. For every $k\in [m_3]$, fix $\tilde Y_k^{(l)}$ and $\tilde W_k^{(l)}$ to update $\tilde X_k^{(l+1)}$ via the first expression in (\ref{updataXYW1}), and obtain $\tilde X^{(l+1)}=(\tilde X_1^{(l+1)},\tilde X_2^{(l+1)},\ldots,\tilde X_{m_3}^{(l+1)})$.\\
2. For every $k\in [m_3]$, fix $\tilde X_k^{(l+1)}$ and $\tilde W_k^{(l)}$ to update $\tilde Y_k^{(l+1)}$ via the second expression in (\ref{updataXYW1}), and obtain $\tilde Y^{(l+1)}=(\tilde Y_1^{(l+1)},\tilde Y_2^{(l+1)},\ldots,\tilde Y_{m_3}^{(l+1)})$.\\
3. For every $k\in [m_3]$, compute $\tilde Z_k^{(l+1)}=\tilde X_k^{(l+1)}\tilde Y_k^{(l+1)}$, and convert $\tilde Z^{(l+1)}=(\tilde Z_1^{(l+1)},\tilde Z_2^{(l+1)},\ldots,\tilde Z_{m_3}^{(l+1)})$ into $Z^{(l+1)}=(Z_1^{(l+1)},Z_2^{(l+1)},\ldots,Z_{m_3}^{(l+1)})$ via (\ref{barAAk}).\\
4. For every $k\in [m_3]$, update $W_k^{(l+1)}$ by solving (\ref{SunkWETP-optim2}) or (\ref{subquKKT}) with $Z_k=Z_k^{(l+1)}$, and obtain $W^{(l+1)}=(W_1^{(l+1)},W_2^{(l+1)},\ldots,W_{m_3}^{(l+1)})$.\\
5. Adjust the rank vector $r^{l+1}\in \mathbb{R}^{m_3}$, and the sizes of $\tilde X_k^{(l+1)}$ and $\tilde Y_k^{(l+1)}$ for $k\in [m_3]$.\\
6. Check the  termination criterion: $\|\tilde X^{(l+1)}-\tilde X^{(l)}\|_{\infty}\le \varepsilon$, $\|\tilde Y^{(l+1)}-\tilde Y^{(l)}\|_{\infty}\le \varepsilon$ and $\|W^{(l+1)}-W^{(l)}\|_F\le \varepsilon$,  \\
7. Convert $W_k^{(l+1)}$ into $\tilde W_k^{(l+1)}$ via (\ref{bar BA-k}) for every $k\in [m_3]$, and $l\leftarrow l+1$.\\
 \textbf{end while}\\
\textbf{Output:} $\tilde X^{(l+1)}$, $\tilde Y^{(l+1)}$, and $W^{(l+1)}=(W_1^{(l+1)},W_2^{(l+1)},\ldots,W_{m_3}^{(l+1)})$ or $\tilde W^{(l+1)}=(\tilde W_1^{(l+1)},\tilde W_2^{(l+1)},\ldots,\tilde W_{m_3}^{(l+1)})$.\\%$\mathcal{W}^{(l+1)}={\rm bvfold}(((W^{(l+1)})^\top,(W^{(l+1)})^\top,\ldots,(W^{(l+1)})^\top)^\top)$.\\
\noindent\rule{\textwidth}{1pt}

Notice that, in the implementation of the above algorithm, we only need to solve $[m_3/2]+1$ subproblems of form  (\ref{updataXYW1}). After obtaining the first $[m_3/2]+1$ pair $(\tilde X^{(l+1)}_k,\tilde Y^{(l+1)}_k)$ by solving (\ref{updataXYW1}), other pair $(\tilde X_k,\tilde Y_k)$ can be obtained by the following expression
$$(\tilde X^{(l+1)}_k,\tilde Y^{(l+1)}_k)=(\overline{\tilde{X}^{(l+1)}_{m_3-k+2}},\overline{\tilde{Y}^{(l+1)}_{m_3-k+2}}), ~~\forall~k=[m_3/2]+2,\ldots m_3.$$  %And then the desired values $(X^{(l+1)}_k,Y^{(l+1)}_k)~(k\in [m_3])$ are obtained by (\ref{barAAk}).

\begin{proposition}\label{KKProp} Let $D$ be a $(m-1)\times m$ simple Toeplitz matrix defined by (\ref{Toeplitz}).
It holds that $\lambda_{\min}(D^\top D)=0$ and $\lambda_{\max}(D^\top D)\leq 4$, where $\lambda_{\min}(\cdot)$ and $\lambda_{\max}(\cdot)$ are the smallest and largest eigenvalues of the considered matrix, respectively.
\end{proposition}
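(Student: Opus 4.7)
The plan is to treat the two bounds separately, since each follows from an elementary structural observation about $D$.

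For the lower bound $\lambda_{\min}(D^\top D)=0$, first I would note that every row of $D$ has exactly one $+1$ and one $-1$ and is zero elsewhere, so $D\mathbf{1}_m=\mathbf{0}$, where $\mathbf{1}_m\in\mathbb{R}^m$ is the all-ones vector. Hence $D^\top D\,\mathbf{1}_m=\mathbf{0}$, exhibiting $\mathbf{1}_m$ as a nonzero eigenvector of $D^\top D$ with eigenvalue $0$. Because $D^\top D$ is a Gram matrix it is positive semidefinite, so $0$ is in fact the smallest eigenvalue, not merely a lower candidate.

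For the upper bound $\lambda_{\max}(D^\top D)\le 4$, I would use the identity $\lambda_{\max}(D^\top D)=\|D\|_2^2$ and split the Toeplitz matrix as the difference of two sub-blocks of the identity. Specifically, write $D=E_1-E_2$ where $E_1=[I_{m-1}\ \mathbf{0}]$ contributes the $+1$'s on the main diagonal of $D$ and $E_2=[\mathbf{0}\ I_{m-1}]$ contributes the $-1$'s on the first super-diagonal. Each of $E_1$ and $E_2$ is a row-selection of $I_m$, so $\|E_1\|_2=\|E_2\|_2=1$, and the triangle inequality for the spectral norm yields $\|D\|_2\le\|E_1\|_2+\|E_2\|_2=2$, hence $\lambda_{\max}(D^\top D)\le 4$.

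A safety-check alternative for the upper bound, if one prefers not to invoke the triangle inequality for the spectral norm, is to form $D^\top D$ explicitly: it is symmetric tridiagonal with diagonal $(1,2,2,\dots,2,1)$ and all sub- and super-diagonal entries equal to $-1$. Gershgorin's disk theorem then confines every eigenvalue to $[0,4]$, since the two boundary rows give disks centered at $1$ of radius $1$, and every interior row gives a disk centered at $2$ of radius $2$. I do not anticipate a real obstacle in either route; the only point to be careful about is the nonzeroness of the kernel vector $\mathbf{1}_m$, which is immediate, and that Gershgorin bounds are automatically tight enough here because $4$ need not be attained.
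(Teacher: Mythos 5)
Your proof is correct, and your primary route for the upper bound differs from the paper's. The paper forms $D^\top D$ explicitly as the symmetric tridiagonal matrix with diagonal $(1,2,\dots,2,1)$ and off-diagonals $-1$, and then cites Gershgorin's circle theorem --- exactly your ``safety-check alternative.'' Your main argument instead splits $D=E_1-E_2$ into two row-selections of the identity and uses the triangle inequality for the spectral norm to get $\|D\|_2\le 2$, hence $\lambda_{\max}(D^\top D)\le 4$; this avoids computing $D^\top D$ at all and generalizes immediately to any difference of two partial isometries. Your treatment of $\lambda_{\min}(D^\top D)=0$ is also slightly more careful than the paper's: Gershgorin only confines the spectrum to $[0,4]$ and so only gives $\lambda_{\min}\ge 0$; equality requires observing that $D^\top D$ is singular, which the paper leaves implicit (it follows from $D$ having more columns than rows) and which you make explicit by exhibiting the kernel vector $\mathbf{1}_m$. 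Both routes are sound; yours is marginally more self-contained, the paper's is marginally shorter.
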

\begin{proof}
Since $D$ is a simple Toeplitz matrix, it is easy to see that
$$
D^\top D=\left[
\begin{array}{cccccc}
1&-1&0&\cdots &0&0\\
-1&2&-1&\cdots&0&0\\
0&-1&2&\ddots&0&0\\
\vdots&\vdots&\ddots&\ddots&\ddots&\vdots\\
0&0&0&-1&2&-1\\
0&0&0&\cdots&-1&1\\
\end{array}
\right]_{m\times m}.
$$
Consequently, the desired result follows from the well-known Gerschgorin's circle Theorem \cite{HJ94}.
\end{proof}

From (\ref{subquKKT}) and Proposition \ref{KKProp}, we can see that, for given $Z^{(l+1)}_k~(k\in [m_3])$  and the obtained solution $W^{(l+1)}_k~(k\in [m_3])$ of (\ref{subquKKT}) with $Z_k=Z_k^{(l+1)}$, it holds that
$$
\sum_{k=1}^{m_3}\|(W^{(l+1)}_k-Z_k^{(l+1)})_{\Omega_k^C}\|_F\leq (\mu+4\rho_2)\sum_{k=1}^{m_3}\|W^{(l+1)}_k\|_F,
$$
which shows that, we can choose small enough parameters $\mu$ and $\rho_2$ to force the difference between $W_k^{(l+1)}$ and $Z_k^{(l+1)}$ for $k\in [m_3]$, to meet the required accuracy, but the value of $\rho_2$ will affect the degree of considering the periodicity of the traffic data. In order to ensure a better recovery effect and accuracy, the values of the parameters $\mu$ and $\rho_2$ should be selected appropriately in the implementation of TCTF2R Algorithm.

\subsection{Convergence of the algorithm}\label{GradCom}
%In this subsection, we study the convergence property of
The algorithm proposed in this paper can be regarded as a modified version of the block coordinate gradient descent method in \cite{BT13}. In this subsection, we show that every accumulation point of the sequence generated by TCTF2R Algorithm satisfies the stationary point systems of (\ref{kXYETP-optim20}) and (\ref{WETP-optim20}). To this end, we first recall the following lemmas, which will be used to analyze convergence properties of TCTF2R Algorithm.

\begin{lemma} \cite{P55}\label{SVPMPI}
Let $A\in \mathbb{C}^{m\times n}$ and $B=UAV^*$, where $U\in \mathbb{C}^{m\times m}$ and $V\in \mathbb{C}^{n\times n}$ are unitary. Then $B^+=VA^+U^*$. In particular, if $USV^*$ is the singular-value decomposition (SVD) of $A$, where $S={\rm diag}(\sigma_{1},\ldots,\sigma_{t},0,\ldots,0)\in \mathbb{C}^{m\times n}$ with $\sigma_{1}\geq \ldots\geq\sigma_{t}>0$, then $A^+=VS^{-1}U^*$,
where $S^{-1}={\rm diag}(\sigma^{-1}_{1},\ldots,\sigma^{-1}_{t},0,\ldots,0)\in \mathbb{C}^{m\times n}$.
\end{lemma}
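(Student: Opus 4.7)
The plan is to prove both parts of Lemma \ref{SVPMPI} by appealing to the uniqueness of the Moore--Penrose generalized inverse, which is characterized by the four Penrose equations: for any $M\in\mathbb{C}^{p\times q}$, the matrix $X\in\mathbb{C}^{q\times p}$ equals $M^+$ if and only if
\begin{equation*}
MXM=M,\quad XMX=X,\quad (MX)^*=MX,\quad (XM)^*=XM.
\end{equation*}
Since the lemma is attributed to \cite{P55}, I would briefly recall this characterization (or cite it) and then simply verify that the candidate expressions satisfy the four conditions.

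For the first assertion, I would set $X:=VA^+U^*$ and check each Penrose identity for the pair $(B,X)$ with $B=UAV^*$. Using $U^*U=I_m$ and $V^*V=I_n$ (unitarity), the products collapse as
\begin{equation*}
BXB = (UAV^*)(VA^+U^*)(UAV^*) = U(AA^+A)V^* = UAV^* = B,
\end{equation*}
and analogously $XBX=X$. For the Hermitian conditions, $BX=U(AA^+)U^*$ and $XB=V(A^+A)V^*$; since $AA^+$ and $A^+A$ are Hermitian (being $M^+$-projections), conjugation by unitaries preserves Hermiticity, so $(BX)^*=BX$ and $(XB)^*=XB$. Uniqueness of the Moore--Penrose inverse then yields $B^+=X=VA^+U^*$.

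For the second assertion, I would observe that it is a direct consequence of the first, applied to the diagonal matrix $S$ in place of $A$. Concretely, writing $A=USV^*$ with $U,V$ unitary gives $A^+=VS^+U^*$, so it remains to identify $S^+$. For the rectangular diagonal matrix $S=\mathrm{diag}(\sigma_1,\ldots,\sigma_t,0,\ldots,0)$, I would set $S^{-1}:=\mathrm{diag}(\sigma_1^{-1},\ldots,\sigma_t^{-1},0,\ldots,0)$ and directly verify the four Penrose identities: $SS^{-1}$ and $S^{-1}S$ are both the (Hermitian, idempotent) diagonal projections onto the first $t$ coordinates of $\mathbb{C}^m$ and $\mathbb{C}^n$, respectively, and $SS^{-1}S=S$, $S^{-1}SS^{-1}=S^{-1}$ follow entry-wise. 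Hence $S^+=S^{-1}$ in the notation of the lemma, completing the proof.

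I do not expect any genuine obstacle here: the result is classical, and the only mildly delicate point is keeping track of the rectangular shapes of $S$ and $S^{-1}$ so that the four Penrose identities are dimensionally meaningful. Writing everything in block form with the first $t$ diagonal entries separated from the zero block makes this entirely routine.
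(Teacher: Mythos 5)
Your proof is correct, and the paper itself gives no proof of this lemma --- it simply cites \cite{P55}, where the result is established by exactly the characterization you use (uniqueness of the matrix satisfying the four Penrose equations). Your verification of the four identities for $VA^+U^*$ and for the rectangular diagonal case is the standard classical argument, so there is nothing to add beyond noting that you correctly treat $S^{-1}$ as an $n\times m$ matrix, whereas the paper's statement contains a small typo in its dimensions.
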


\begin{lemma}\cite{Ber08}\label{innerAB}
Let $A\in \mathbb{C}^{m\times m}$ be a Hermitian matrix with eigenvalues $\alpha_1\geq\ldots \geq \alpha_m$. Then, it holds that $
\alpha_m{\rm tr}(B)\leq {\rm tr}(AB) \leq \alpha_1{\rm tr}(B)$ for any given positive semidefinite Hermitian matrix $B\in \mathbb{C}^{m\times m}$.
\end{lemma}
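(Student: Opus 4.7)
The plan is to exploit the spectral decomposition of the Hermitian matrix $A$ together with the unitary invariance of the trace and of the positive semidefinite cone. First I would write $A=U\Lambda U^*$, where $U\in\mathbb{C}^{m\times m}$ is unitary and $\Lambda={\rm diag}(\alpha_1,\ldots,\alpha_m)$; this is available since $A$ is Hermitian. Using the cyclic property of the trace, I then rewrite ${\rm tr}(AB)={\rm tr}(U\Lambda U^*B)={\rm tr}(\Lambda\,U^*BU)$.

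Next I would set $C:=U^*BU$. Because $B$ is positive semidefinite Hermitian and $U$ is unitary, $C$ is again positive semidefinite Hermitian; in particular its diagonal entries satisfy $c_{ii}=u_i^*Bu_i\ge 0$, where $u_i$ is the $i$-th column of $U$. At the same time, ${\rm tr}(B)={\rm tr}(U^*BU)={\rm tr}(C)=\sum_{i=1}^m c_{ii}$. Since $\Lambda$ is diagonal, ${\rm tr}(\Lambda C)=\sum_{i=1}^m \alpha_i c_{ii}$, so the two sides of the target inequality both reduce to weighted sums of the same nonnegative weights $c_{ii}$.

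I would finish by comparing componentwise: for every $i$, the nonnegativity $c_{ii}\ge 0$ together with $\alpha_m\le \alpha_i\le \alpha_1$ gives $\alpha_m c_{ii}\le \alpha_i c_{ii}\le \alpha_1 c_{ii}$. Summing over $i$ produces $\alpha_m{\rm tr}(B)\le {\rm tr}(AB)\le \alpha_1{\rm tr}(B)$, as required.

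There is no genuinely hard step here; the only point that needs to be flagged carefully is the observation that the diagonal entries of a positive semidefinite matrix are nonnegative, since this is exactly what legitimates the termwise comparison against the extremal eigenvalues $\alpha_1$ and $\alpha_m$. Everything else is a direct manipulation of the spectral decomposition together with the cyclic and unitary-invariance properties of the trace.
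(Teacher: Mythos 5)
Your proof is correct and complete: the spectral decomposition $A=U\Lambda U^*$, the cyclic invariance of the trace, and the nonnegativity of the diagonal entries of the positive semidefinite matrix $U^*BU$ are exactly the ingredients needed, and the termwise comparison finishes the argument. The paper itself offers no proof of this lemma --- it is quoted directly from Bernstein's \emph{Matrix Mathematics} --- so there is nothing to compare against; your argument is the standard one and fills that gap correctly.
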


\begin{lemma}\cite{TH77}\label{ASHermitian}
Let $C=B^*AB$ with eigenvalues $\lambda_1\geq \ldots\geq\lambda_n$, where $A\in \mathbb{C}^{m\times m}$ is a Hermitian matrix with eigenvalues $\alpha_1\geq\ldots \geq \alpha_m\geq 0$, and $B\in\mathbb{C}^{m\times n}$ with singular values $\beta_1\geq \ldots\geq\beta_n\geq 0$, i.e., the eigenvalues of the positive semidefinite matrix $(B^*B)^{1/2}$. Then the following statements hold

(1) $\lambda_{i+j-1}\leq \beta_i^2\alpha_j$, for every $1\leq i\leq n$, $1\leq j\leq m$ and $i+j-1\leq n$.

(2) $\beta_i^2\alpha_j\leq\lambda_{i+j-m}$, for every $1\leq i\leq n$, $1\leq j\leq m$ and $i+j>m$.
\end{lemma}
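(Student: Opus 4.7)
The plan is to derive both inequalities from Courant--Fischer min--max characterizations of the eigenvalues of the Hermitian matrix $C = B^\ast A B$, by intersecting singular-vector subspaces of $B$ with eigenvector subspaces of $A$. Fix an orthonormal eigenbasis $u_1,\ldots,u_m$ of $A$ corresponding to $\alpha_1 \geq \cdots \geq \alpha_m$, and write a thin singular value decomposition $B = \sum_{l=1}^n \beta_l\, p_l q_l^\ast$ with orthonormal right singular vectors $q_1,\ldots,q_n$ and left singular vectors $p_1,\ldots,p_n$. For any $x = \sum_l c_l q_l$, one has $Bx = \sum_l \beta_l c_l p_l$, so controlling $x$ inside $\operatorname{span}\{q_i,\ldots,q_n\}$ or $\operatorname{span}\{q_1,\ldots,q_i\}$ bounds $\|Bx\|^2$ by $\beta_i^2\|x\|^2$ from above or from below, respectively.

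For part (1), I would apply $\lambda_k(C) \leq \max_{x \in V,\,\|x\|=1} x^\ast C x$, which is valid for any subspace $V$ with $\dim V \geq n - k + 1$, taking $k = i+j-1$. The candidate subspace is
\[
V \;=\; \operatorname{span}\{q_i,\ldots,q_n\} \;\cap\; \bigl\{x \in \mathbb{C}^n : u_l^\ast B x = 0,\ l = 1,\ldots,j-1\bigr\}.
\]
The first factor has dimension $n - i + 1$, and the kernel of $j-1$ linear functionals has dimension at least $n - (j-1)$, so by the standard inequality $\dim(V_1 \cap V_2) \geq \dim V_1 + \dim V_2 - n$ we obtain $\dim V \geq n - i - j + 2$, as required. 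For $x \in V$, membership in $\operatorname{span}\{q_i,\ldots,q_n\}$ gives $\|Bx\|^2 \leq \beta_i^2\|x\|^2$, while the orthogonality conditions force $Bx \in \operatorname{span}\{u_j,\ldots,u_m\}$, hence $(Bx)^\ast A (Bx) \leq \alpha_j \|Bx\|^2$. Combining yields $x^\ast C x \leq \beta_i^2 \alpha_j\|x\|^2$, and the bound $\lambda_{i+j-1} \leq \beta_i^2 \alpha_j$ follows.

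Part (2) runs dually: I would use $\lambda_k(C) \geq \min_{x \in W,\,\|x\|=1} x^\ast C x$ for any $W$ with $\dim W \geq k$, this time with $k = i+j-m$. The candidate is
\[
W \;=\; \operatorname{span}\{q_1,\ldots,q_i\} \;\cap\; \bigl\{x \in \mathbb{C}^n : u_l^\ast Bx = 0,\ l = j+1,\ldots,m\bigr\},
\]
whose dimension is at least $i + (n - (m - j)) - n = i + j - m$, which is positive by hypothesis. For $x \in W$, the restriction $x \in \operatorname{span}\{q_1,\ldots,q_i\}$ gives $\|Bx\|^2 \geq \beta_i^2\|x\|^2$, and the orthogonality constraints force $Bx \in \operatorname{span}\{u_1,\ldots,u_j\}$, so $(Bx)^\ast A(Bx) \geq \alpha_j\|Bx\|^2$. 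This produces $x^\ast C x \geq \beta_i^2 \alpha_j\|x\|^2$ and hence $\lambda_{i+j-m} \geq \beta_i^2 \alpha_j$.

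The main difficulty I anticipate is the dimension bookkeeping: one must carefully verify that the index constraints $i + j - 1 \leq n$ and $i + j > m$ really suffice to keep the candidate subspaces nontrivial and of the claimed dimension, and that choosing a subspace of dimension \emph{at least} (rather than exactly) the threshold is compatible with the min--max bounds via restriction to a subspace of the correct dimension. Once the subspaces are set up correctly, the remaining norm and quadratic-form estimates are routine linear-algebraic manipulations.
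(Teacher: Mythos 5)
The paper itself gives no proof of this lemma --- it is quoted verbatim from Thompson \cite{TH77} and used as a black box --- so there is no internal argument to compare against. Your Courant--Fischer proof is correct and complete, and it is essentially the standard subspace-intersection argument by which such inertia/interlacing results are established. The dimension counts work out exactly: in part (1) the subspace $V$ has $\dim V\geq (n-i+1)+(n-j+1)-n=n-(i+j-1)+1$, which is what the min--max bound $\lambda_k\leq\max_{x\in V,\|x\|=1}x^*Cx$ requires with $k=i+j-1$, and the condition $i+j-1\leq n$ is precisely what keeps this positive; dually in part (2), $\dim W\geq i+j-m=k>0$ is guaranteed by $i+j>m$, and $k\leq n$ follows from $i\leq n$, $j\leq m$. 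Two small points you should make explicit. First, the chaining steps $(Bx)^*A(Bx)\leq\alpha_j\|Bx\|^2\leq\alpha_j\beta_i^2\|x\|^2$ and its reverse in part (2) both use the hypothesis $\alpha_j\geq 0$ (i.e.\ $A\succeq 0$); without it the multiplication of the two bounds could reverse sign, so this is where the assumption $\alpha_m\geq 0$ enters. Second, when $m<n$ one cannot literally choose $n$ orthonormal left singular vectors $p_1,\ldots,p_n$ in $\mathbb{C}^m$; but the terms with $\beta_l=0$ contribute nothing to $Bx$, so the identity $\|Bx\|^2=\sum_l\beta_l^2|c_l|^2$ survives and the estimates are unaffected. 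Neither point is a gap, only bookkeeping worth recording.
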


In what follows, we focus on discussing the properties of the search directions $d_{\tilde X}^{(l)}$ and $d_{\tilde Y}^{(l)}$. For every $k\in [m_3]$, let the sequence $\{(\tilde X_k^{(l)},\tilde Y_k^{(l)})\}$ be generated by TCTF2R Algorithm. Let $U_{\tilde Y_k}^{(l)}S_{\tilde Y_k}^{(l)}(V_{\tilde Y_k}^{(l)})^*$ be the singular-value decomposition (SVD) of $\tilde Y_k^{(l)}$ with $S_{\tilde Y_k}^{(l)}={\rm diag}(\sigma_{\tilde Y_kl1},\ldots,\sigma_{\tilde Y_klt},0,\ldots,0)\in \mathbb{C}^{r\times m_2}$ and $\sigma_{\tilde Y_kl1}\geq \ldots\geq\sigma_{\tilde Y_klt}>0$, and let $U_{\tilde X_k}^{(l)}S_{\tilde X_k}^{(l)}(V_{\tilde X_k}^{(l)})^*$ be the SVD of $\tilde X_k^{(l+1)}$, where $S_{\tilde X_k}^{(l)}={\rm diag}(\sigma_{\tilde X_kl1},\ldots,\sigma_{\tilde X_kls},0,\ldots,0)\in \mathbb{C}^{m_1\times r}$ with $\sigma_{\tilde X_k}l1\geq \ldots\geq\sigma_{\tilde X_kls}>0$.

\begin{proposition}\label{DirecX}
Let $U_{\tilde Y_k}^{(l)}=[U_{\tilde Y_k1}^{(l)},U_{\tilde Y_k2}^{(l)}]$, where $U_{\tilde Y_k1}^{(l)}$ be corresponding to the positive singular-values of $\tilde Y_k^{(l)}$. Then it holds that
\begin{equation}\label{XDirection}
\langle d_{\tilde X_k}^{(l)}, \nabla_{\tilde X_k}h_k(\tilde X_k^{(l)},\tilde Y_k^{(l)})\rangle\leq \displaystyle-\frac{1}{(1+4\rho_1)\sigma^2_{\tilde Y_kl1}}\|\nabla_{\tilde X_k}h_k(\tilde X_k^{(l)},\tilde Y_k^{(l)})U_{\tilde Y_k1}^{(l)}\|_F^2.
\end{equation}
\end{proposition}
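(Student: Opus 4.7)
The plan is to expand $\langle d_{\tilde X_k}^{(l)}, \nabla_{\tilde X_k}h_k(\tilde X_k^{(l)},\tilde Y_k^{(l)})\rangle$ as a trace, use the SVD of $\tilde Y_k^{(l)}$ to make the Moore--Penrose inverse explicit, and then apply two Loewner-order bounds: one for the pseudoinverse of $\tilde Y_k^{(l)}(\tilde Y_k^{(l)})^*$ restricted to its range, and one for $H_{\rho_1}^{-1}$. Write $G:=\nabla_{\tilde X_k}h_k(\tilde X_k^{(l)},\tilde Y_k^{(l)})$ and $Q:=(\tilde Y_k^{(l)}(\tilde Y_k^{(l)})^*)^{+}$ for brevity. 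Using the Frobenius inner product $\langle A,B\rangle=\mathrm{tr}(AB^*)$ defined in Section~\ref{Prelim} and the cyclic property of the trace, I would rewrite
\begin{equation*}
\langle d_{\tilde X_k}^{(l)}, G\rangle=-\mathrm{tr}\bigl(H_{\rho_1}^{-1}GQG^{*}\bigr)=-\mathrm{tr}\bigl(H_{\rho_1}^{-1}\cdot GQG^{*}\bigr),
\end{equation*}
noting that both $H_{\rho_1}^{-1}$ and $GQG^{*}$ are Hermitian positive semidefinite (the former because $\rho_1\ge 0$ and $H^\top H\succeq 0$, the latter because $Q\succeq 0$).

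Next I would use Lemma~\ref{SVPMPI} together with the given SVD $\tilde Y_k^{(l)}=U_{\tilde Y_k}^{(l)}S_{\tilde Y_k}^{(l)}(V_{\tilde Y_k}^{(l)})^{*}$ to write
\begin{equation*}
Q=U_{\tilde Y_k1}^{(l)}\,\mathrm{diag}\bigl(\sigma_{\tilde Y_kl1}^{-2},\ldots,\sigma_{\tilde Y_klt}^{-2}\bigr)\,(U_{\tilde Y_k1}^{(l)})^{*},
\end{equation*}
so that on its range $Q\succeq \sigma_{\tilde Y_kl1}^{-2}\,U_{\tilde Y_k1}^{(l)}(U_{\tilde Y_k1}^{(l)})^{*}$. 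Sandwiching by $G$ on the left and $G^{*}$ on the right preserves the Loewner order, yielding
\begin{equation*}
GQG^{*}\succeq \frac{1}{\sigma_{\tilde Y_kl1}^{2}}\,(GU_{\tilde Y_k1}^{(l)})(GU_{\tilde Y_k1}^{(l)})^{*}.
\end{equation*}
Since $H_{\rho_1}^{-1}$ is positive semidefinite, Lemma~\ref{innerAB} (applied to the difference, which is still positive semidefinite) gives
\begin{equation*}
\mathrm{tr}\bigl(H_{\rho_1}^{-1}GQG^{*}\bigr)\ge \frac{1}{\sigma_{\tilde Y_kl1}^{2}}\,\mathrm{tr}\!\Bigl(H_{\rho_1}^{-1}(GU_{\tilde Y_k1}^{(l)})(GU_{\tilde Y_k1}^{(l)})^{*}\Bigr).
\end{equation*}

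Finally, I would invoke Proposition~\ref{KKProp} for the Toeplitz matrix $H$: the eigenvalues of $H^\top H$ lie in $[0,4]$, so those of $H_{\rho_1}=I+\rho_1 H^\top H$ lie in $[1,1+4\rho_1]$, hence $H_{\rho_1}^{-1}\succeq (1+4\rho_1)^{-1}I_{m_1}$. Using this and another application of Lemma~\ref{innerAB} on the trace gives
\begin{equation*}
\mathrm{tr}\!\Bigl(H_{\rho_1}^{-1}(GU_{\tilde Y_k1}^{(l)})(GU_{\tilde Y_k1}^{(l)})^{*}\Bigr)\ge \frac{1}{1+4\rho_1}\,\|GU_{\tilde Y_k1}^{(l)}\|_F^{2}.
\end{equation*}
Chaining the three inequalities and putting the minus sign back in front then produces (\ref{XDirection}).

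The routine bookkeeping (cyclic trace, SVD substitution, eigenvalue estimates for a tridiagonal Toeplitz matrix via Ger\v{s}gorin) is straightforward; the step I expect to require the most care is the passage from $Q\succeq \sigma_{\tilde Y_kl1}^{-2}U_{\tilde Y_k1}^{(l)}(U_{\tilde Y_k1}^{(l)})^{*}$ to an inequality involving $GQG^{*}$, because $Q$ is only positive definite on the range of $U_{\tilde Y_k1}^{(l)}$, not on all of $\mathbb{C}^{r}$. I would make the restriction explicit by decomposing $U_{\tilde Y_k}^{(l)}=[U_{\tilde Y_k1}^{(l)},U_{\tilde Y_k2}^{(l)}]$ so that $(U_{\tilde Y_k2}^{(l)})^{*}$ lies in $\ker Q$, which ensures that only the $U_{\tilde Y_k1}^{(l)}$-block contributes and so the factor $\|GU_{\tilde Y_k1}^{(l)}\|_F^{2}$ (and not $\|G\|_F^{2}$) is the correct quantity on the right-hand side.
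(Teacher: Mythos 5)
Your proposal is correct and follows essentially the same route as the paper's proof: both make the Moore--Penrose inverse explicit via Lemma~\ref{SVPMPI} and the SVD of $\tilde Y_k^{(l)}$ so that only the $U_{\tilde Y_k1}^{(l)}$-block survives in the trace, and then bound $(\tilde S_1^{(l)})^{-1}$ below by $\sigma_{\tilde Y_kl1}^{-2}I$ and $H_{\rho_1}^{-1}$ below by $(1+4\rho_1)^{-1}I$ using Proposition~\ref{KKProp} and Lemma~\ref{innerAB}. The Loewner-order phrasing is just a repackaging of the paper's direct block-matrix computation.
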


\begin{proof}
For writing concisely, the subscript letters $\tilde Y_k$ and $k$ in this lemma have been removed in our proof. Since $U^{(l)}S^{(l)}(V^{(l)})^*$ is the SVD of $\tilde Y^{(l)}$, we have
$\tilde Y^{(l)}(\tilde Y^{(l)})^*=U^{(l)}\tilde S^{(l)}(U^{(l)})^*$, where $\tilde S^{(l)}=S^{(l)}(S^{(l)})^\top={\rm diag}(\tilde S_1^{(l)},O)$, $\tilde S_1^{(l)}={\rm diag}(\sigma^2_{l1},\ldots,\sigma^2_{lt})\in \mathbb{C}^{t\times t}$ and $O\in \mathbb{C}^{(r-t)\times (r-t)}$ is a zero matrix. Consequently, by Lemma \ref{SVPMPI}, it holds that
\begin{equation}\label{tildeY}
(\tilde Y^{(l)}(\tilde Y^{(l)})^*)^+=U^{(l)}(\tilde S^{(l)})^{+}(U^{(l)})^*,
\end{equation}
where $(\tilde S^{(l)})^{+}={\rm diag}((\tilde S_1^{(l)})^{-1},O)$ and $(\tilde S_1^{(l)})^{-1}={\rm diag}(\sigma^{-2}_{l1},\ldots,\sigma^{-2}_{lt})$. For simplicity, we write $R_{U1}^{(l)}=\nabla_{\tilde X}h(\tilde X^{(l)},\tilde Y^{(l)})U^{(l)}_1$ and $R_{U2}^{(l)}=\nabla_{\tilde X}h(\tilde X^{(l)},\tilde Y^{(l)})U^{(l)}_2$. Then, by (\ref{tildeY}),
we know
$$
\begin{array}{lll}
d_{\tilde X}^{(l)}&=&-H_{\rho_1}^{-1}\nabla_{\tilde X}h(\tilde X^{(l)},\tilde Y^{(l)})\big(\tilde Y^{(l)}(\tilde Y^{(l)})^*\big)^{+}\\
&=&-H_{\rho_1}^{-1}[R_{U1}^{(l)},R_{U2}^{(l)}](\tilde S^{(l)})^{+}(U^{(l)})^*,
\end{array}
$$
which implies
\begin{equation}\label{IndXf}
\begin{array}{lll}
\langle d_{\tilde X}^{(l)}, \nabla_{\tilde X}h(\tilde X^{(l)},\tilde Y^{(l)})\rangle&=&-{\rm tr}\left(H_{\rho_1}^{-1}[R_{U1}^{(l)},R_{U2}^{(l)}](\tilde S^{(l)})^{+}[R_{U1}^{(l)},R_{U2}^{(l)}]^*\right)\\
&=&-{\rm tr}\left(H_{\rho_1}^{-1}R_{U1}^{(l)}(\tilde S_1^{(l)})^{-1}(R_{U1}^{(l)})^*\right).
\end{array}
\end{equation}
It is clear that $H_{\rho_1}^{-1}$ and $(\tilde S_1^{(l)})^{-1}$ are positive definite. Consequently, by (\ref{IndXf}), Proposition \ref{KKProp} and Lemma \ref{innerAB}, it holds that
$$
\langle d_{X}^{(l)}, \nabla_{\tilde X}h(\tilde X^{(l)},\tilde Y^{(l)})\rangle\leq -\frac{1}{(1+4\rho_1)\sigma^2_{l1}}{\rm tr}((R_{U1}^{(l)})^*R_{U1}^{(l)})=-\frac{1}{(1+4\rho_1)\sigma^2_{l1}}\|R_{U1}^{(l)}\|_F^2.
$$
From this, we obtain the desired result (\ref{XDirection}), since $R_{U1}^{(l)}=\nabla_{\tilde X}h(\tilde X^{(l)},\tilde Y^{(l)})U^{(l)}_1$. We complete the proof.
\end{proof}

\begin{proposition}\label{DirecY}
 Let $V_{\tilde X_k}^{(l)}=[V_{\tilde X_k1}^{(l)},V_{\tilde X_k2}^{(l)}]$, where $V_{\tilde X_k1}^{(l)}$ is corresponding to the positive singular-values of $\tilde X_k^{(l+1)}$. Then it holds that
\begin{equation}\label{XDirection}
\langle d_{\tilde Y_k}^{(l)},\nabla_{\tilde Y_k}h_k(\tilde X_k^{(l+1)},\tilde Y_k^{(l)})\rangle\leq \displaystyle-\frac{1}{(1+4\rho_1)\sigma^2_{\tilde X_kl1}}\|(V_{\tilde X_k1}^{(l)})^*\nabla_{\tilde Y_k}h_k(\tilde X_k^{(l+1)},\tilde Y_k^{(l)})\|_F^2.
\end{equation}
\end{proposition}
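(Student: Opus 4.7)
The plan is to mirror the proof of Proposition \ref{DirecX}, but with the extra twist that here $H_{\rho_1}$ sits \emph{between} the two copies of the SVD factors of $\tilde X_k^{(l+1)}$, so the middle matrix that needs to be inverted is no longer just a diagonal of singular values. For notational ease, I would drop the subscript $\tilde X_k$ and the iteration index where it is unambiguous, writing the reduced SVD of $\tilde X_k^{(l+1)}$ as $\tilde X_k^{(l+1)} = U_1 S_1 V_1^*$, where $S_1 = {\rm diag}(\sigma_{\tilde X_k l1},\ldots,\sigma_{\tilde X_k ls})$ is $s\times s$ with strictly positive entries, $U_1 = U_{\tilde X_k 1}^{(l)}$, and $V_1 = V_{\tilde X_k 1}^{(l)}$.

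First, I would compute the Hermitian positive semidefinite middle matrix
\[
C := (\tilde X_k^{(l+1)})^* H_{\rho_1}\tilde X_k^{(l+1)} = V_1 T V_1^*, \qquad T := S_1 U_1^* H_{\rho_1} U_1 S_1 \in \mathbb{C}^{s\times s},
\]
which is Hermitian positive definite since $S_1$ is invertible and $H_{\rho_1} = I + \rho_1 H^\top H$ is positive definite. Embedding $C$ as a unitary conjugation via the full $V_{\tilde X_k}^{(l)}$ and applying Lemma \ref{SVPMPI} to the block-diagonal Hermitian matrix with blocks $T$ and $0$, I would get
\[
C^+ = V_1 T^{-1} V_1^*.
\]
Substituting into the definition of $d_{\tilde Y_k}^{(l)}$ and writing $R_{V1}^{(l)} := V_1^*\nabla_{\tilde Y_k}h_k(\tilde X_k^{(l+1)},\tilde Y_k^{(l)})$, the cyclic property of the trace yields
\[
\langle d_{\tilde Y_k}^{(l)},\nabla_{\tilde Y_k}h_k(\tilde X_k^{(l+1)},\tilde Y_k^{(l)})\rangle = -{\rm tr}\bigl(T^{-1} R_{V1}^{(l)}(R_{V1}^{(l)})^*\bigr),
\]
which is the analogue of equation (\ref{IndXf}) from the previous proof.

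The one step that is genuinely new is bounding $\lambda_{\max}(T)$ from above, and this is where Lemma \ref{ASHermitian} enters. Applying part (1) of that lemma with $B = \tilde X_k^{(l+1)}$, $A = H_{\rho_1}$, and $i=j=1$, the largest eigenvalue of the Hermitian matrix $C = B^* A B$ satisfies $\lambda_1 \le \sigma_{\tilde X_k l1}^2\,\|H_{\rho_1}\|_2$. Since $T$ and $C$ have identical nonzero eigenvalues, and since Proposition \ref{KKProp} gives $\|H_{\rho_1}\|_2 \le 1 + 4\rho_1$, we obtain $\lambda_{\max}(T) \le (1+4\rho_1)\sigma_{\tilde X_k l1}^2$, and hence $\lambda_{\min}(T^{-1}) \ge \bigl((1+4\rho_1)\sigma_{\tilde X_k l1}^2\bigr)^{-1}$.

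Finally, I would apply Lemma \ref{innerAB} with the Hermitian positive definite matrix $T^{-1}$ and the positive semidefinite matrix $R_{V1}^{(l)}(R_{V1}^{(l)})^*$ to conclude
\[
{\rm tr}\bigl(T^{-1} R_{V1}^{(l)}(R_{V1}^{(l)})^*\bigr) \ge \frac{1}{(1+4\rho_1)\sigma_{\tilde X_k l1}^2}\,\|R_{V1}^{(l)}\|_F^2,
\]
which, after negating and substituting back the definition of $R_{V1}^{(l)}$, is exactly the claimed inequality. The main obstacle is really just step (ii), the eigenvalue bound on the sandwich matrix $T$; everything else is a direct transcription of the argument used for $d_{\tilde X_k}^{(l)}$ with $U_{\tilde Y_k 1}^{(l)}$ replaced by $V_{\tilde X_k 1}^{(l)}$ and the pseudoinverse identity taken in its left-multiplied form.
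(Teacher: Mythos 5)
Your proposal is correct and follows essentially the same route as the paper's own proof: the same unitary factorization of $(\tilde X_k^{(l+1)})^*H_{\rho_1}\tilde X_k^{(l+1)}$ via the SVD of $\tilde X_k^{(l+1)}$ (your $T$ is the paper's $\Sigma_1^{(l)}$), the same use of Lemma \ref{SVPMPI} for the pseudoinverse, the same eigenvalue bound $\lambda_{\max}(T)\leq(1+4\rho_1)\sigma_{\tilde X_kl1}^2$ obtained from Lemma \ref{ASHermitian} together with Proposition \ref{KKProp}, and the same final application of Lemma \ref{innerAB} to the trace. The only cosmetic difference is that you work with the reduced SVD and transfer the eigenvalue bound from $C$ to $T$ via equality of nonzero spectra, whereas the paper keeps the full SVD and extracts the invertible diagonal block explicitly.
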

\begin{proof}
For writing concisely, the subscript letters $\tilde X_k$ and $k$ in this lemma have been removed in our proof. Since $U^{(l)}S^{(l)}(V^{(l)})^*$ is the SVD of $\tilde X^{(l+1)}$, we have
$$
(\tilde X^{(l+1)})^*H_{\rho_1}\tilde X^{(l+1)}=V^{(l)}\Sigma^{(l)} (V^{(l)})^*
$$
with $\Sigma^{(l)}=(S^{(l)})^\top (U^{(l)})^*H_{\rho_1}U^{(l)}S^{(l)}$, which implies, together Lemma \ref{SVPMPI}, that
\begin{equation}\label{XIHH}
\big((\tilde X^{(l+1)})^*H_{\rho_1}\tilde X^{(l+1)}\big)^+=V^{(l)}(\Sigma^{(l)})^+ (V^{(l)})^*.
\end{equation}
Write $S^{(l)}={\rm diag}(S_2^{(l)},O)\in \mathbb{C}^{r\times r}$ with $S_2^{(l)}={\rm diag}(\sigma_{l1},\ldots,\sigma_{ls})\in \mathbb{C}^{s\times s}$ and $O\in \mathbb{C}^{(r-s)\times (r-s)}$ being a zero matrix, $U^{(l)}=[U^{(l)}_1,U^{(l)}_2]$ and $V^{(l)}=[V^{(l)}_1,V^{(l)}_2]$, where $U_1^{(l)}$ and $V_1^{(l)}$ are corresponding to $S_2^{(l)}$. Consequently, we have $\Sigma^{(l)}={\rm diag}(\Sigma^{(l)}_1,O)$, which implies
\begin{equation}\label{Sigmal}
(\Sigma^{(l)})^+={\rm diag}((\Sigma^{(l)}_1)^+,O)={\rm diag}((\Sigma^{(l)}_1)^{-1},O),
\end{equation}
 where $\Sigma^{(l)}_1=(S_2^{(l)})^\top (U_1^{(l)})^*H_{\rho_1}U_1^{(l)}S_2^{(l)}$ is invertible, which implies, together with Proposition \ref{KKProp} and Lemma \ref{ASHermitian}, that
 \begin{equation}\label{lammax}
 \lambda_{\min}((\Sigma^{(l)}_1)^{-1})=(\lambda_{\max}(\Sigma^{(l)}_1))^{-1}\geq \frac{1}{(1+4\rho_1)\sigma_{l1}^2}.
 \end{equation}
From the definition of $d_{\tilde Y}^{(l)}$, (\ref{XIHH}), (\ref{Sigmal}) and (\ref{lammax}), we have
% \begin{equation}\label{lamddin}
 $$
 \begin{array}{lll}
 \langle d_{\tilde Y}^{(l)},\nabla_{\tilde Y}h(\tilde X^{(l+1)},\tilde Y^{(l)})\rangle&=&-{\rm tr}\big((R_{V1}^{(l)})^*(\Sigma^{(l)}_1)^{-1}R_{V1}^{(l)}\big)\\
 &\leq &-\lambda_{\min}((\Sigma^{(l)}_1)^{-1})\|R_{V1}^{(l)}\|_F^2\\
 &\leq&-\displaystyle\frac{1}{(1+4\rho_1)\sigma_{l1}^2}\|R_{V1}^{(l)}\|_F^2,
\end{array}
$$%\end{equation}
where $R_{V1}^{(l)}=(V_1^{(l)})^*\nabla_{\tilde Y}h(\tilde X^{(l+1)},\tilde Y^{(l)})$, and the first inequality comes from Lemma \ref{innerAB}. We obtain the desired result and complete the proof.
\end{proof}

\begin{remark}
From Propositions \ref{DirecX} and \ref{DirecY}, we see that, $d_{\tilde X_k}^{(l)}$ is a descent direction of $h_k$ at $(\tilde X_k^{(l)},\tilde Y_k^{(l)})$ provided $\nabla_{\tilde X_k}h_k(\tilde X_k^{(l)},\tilde Y_k^{(l)})U_{\tilde Y_k1}^{(l)}\neq0$, and $d_{\tilde Y}^{(l)}$ is a descent direction of $h_k$ at $(\tilde X_k^{(l+1)},\tilde Y_k^{(l)})$ provided  $(V_{\tilde X_k1}^{(l)})^*\nabla_{\tilde Y}h_k(\tilde X_k^{(l+1)},\tilde Y_k^{(l)})\neq0$. In particular, by Proposition \ref{DirecX}, we know that, if $\tilde Y^{(l)}_k$ is of full row rank, then $U^{(l)}_{\tilde Y_k1}=U_{\tilde Y_k}^{(l)}$ is unitary. In this case, $d_{\tilde X_k}^{(l)}$ is a descent direction of $h_k$ at $(\tilde X_k^{(l)},\tilde Y_k^{(l)})$, provided $\nabla_{\tilde X_k}h_k(\tilde X_k^{(l)},\tilde Y_k^{(l)})\neq0$, which is similar to the conclusion that the negative gradient direction is the descent direction under normal circumstances. By Proposition \ref{DirecY}, we know that, a similar result also holds for $d_{\tilde Y_k}^{(l)}$, i.e., if $\tilde X_k^{(l+1)}$ is of full column rank, then  $d_{\tilde Y_k}^{(l)}$ is a descent direction of $h_k$ at $(\tilde X_k^{(l+1)},\tilde Y_k^{(l)})$, provided $\nabla_{\tilde Y_k}h_k(\tilde X_k^{(l+1)},\tilde Y_k^{(l)})\neq0$. \end{remark}

\begin{remark}
From Propositions \ref{DirecX} and \ref{DirecY}, we know that
\begin{equation}\label{XXYY1}
\begin{array}{l}
\langle \tilde X_k^{(l+1)}-\tilde X_k^{(l)},\nabla_{\tilde X_k}h_k(\tilde X_k^{(l)},\tilde Y_k^{(l)})\rangle\\
~~~~~~\leq \displaystyle-\frac{\alpha_{kl}}{L^{(l)}_{k1}(1+4\rho_1)\sigma^2_{\tilde Y_kl1}}\|\nabla_{\tilde X_k}h_k(\tilde X_k^{(l)},\tilde Y_k^{(l)})U_{Y_k1}^{(l)}\|_F^2.
\end{array}\end{equation}
and \begin{equation}\label{XXYY2}
\begin{array}{l}
\langle \tilde Y_k^{(l+1)}-\tilde Y_k^{(l)},\nabla_{\tilde Y_k}h_k(\tilde X_k^{(l+1)},\tilde Y_k^{(l)})\rangle\\
~~~~~~\leq \displaystyle-\frac{\beta_{kl}}{L^{(l)}_{k2}(1+4\rho_1)\sigma^2_{\tilde X_kl1}}\|(V_{\tilde X_k1}^{(l)})^*\nabla_{\tilde Y_k}h_k(\tilde X_k^{(l+1)},\tilde Y_k^{(l)})\|_F^2.
\end{array}\end{equation}
\end{remark}

\begin{proposition}\label{XX2YY}
Let $U_{\tilde Y_k}^{(l)}=[U_{\tilde Y_k1}^{(l)},U_{\tilde Y_k2}^{(l)}]$ and $V_{\tilde X_k}^{(l)}=[V_{\tilde X_k1}^{(l)},V_{\tilde X_k2}^{(l)}]$, where $U_{\tilde Y_k1}^{(l)}$ and $V_{\tilde X_k1}^{(l)}$ are corresponding to the positive singular-values of $\tilde Y_k^{(l)}$ and $\tilde X_k^{(l+1)}$, respectively. Then it holds that
\begin{equation}\label{XXYYZZ}
\left\{\begin{array}{l}
\|\tilde X_k^{(l+1)}-\tilde X_k^{(l)}\|_F^2\leq \displaystyle\frac{\alpha^2_{kl}}{(L^{(l)}_{k1})^2\sigma^4_{\tilde Y_klt}}\|\nabla_{\tilde X_k}h_k(\tilde X_k^{(l)},\tilde Y_k^{(l)})U_{Y_k1}^{(l)}\|_F^2,\\
\|\tilde Y_k^{(l+1)}-\tilde Y_k^{(l)}\|_F^2\leq \displaystyle\frac{\beta^2_{l}}{(L^{(l)}_{k2})^2\sigma^4_{\tilde X_kls}}\|(V_{\tilde X_k1}^{(l)})^*\nabla_{\tilde Y_k}h_k(\tilde X_k^{(l+1)},\tilde Y_k^{(l)})\|_F^2.
%\|\tilde W^{(l+1)}-\tilde W^{(l)}\|_F^2\leq \displaystyle\frac{\gamma^2_{l}}{L_{3}^2(1+\mu)^2}\|\nabla_{\tilde W}f(\tilde Z^{(l2)})\|_F^2.
\end{array}
\right.
\end{equation}
\end{proposition}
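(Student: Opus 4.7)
The plan is to substitute the update rules from (\ref{updataXYW1}) directly into the left-hand sides, so that
\[
\|\tilde X_k^{(l+1)}-\tilde X_k^{(l)}\|_F^2 = \frac{\alpha_{kl}^2}{(L_{k1}^{(l)})^2}\|d_{\tilde X_k}^{(l)}\|_F^2,\qquad \|\tilde Y_k^{(l+1)}-\tilde Y_k^{(l)}\|_F^2 = \frac{\beta_{kl}^2}{(L_{k2}^{(l)})^2}\|d_{\tilde Y_k}^{(l)}\|_F^2,
\]
and then produce the claimed inequalities by bounding $\|d_{\tilde X_k}^{(l)}\|_F^2$ and $\|d_{\tilde Y_k}^{(l)}\|_F^2$ using the SVD data already introduced in the proofs of Propositions \ref{DirecX} and \ref{DirecY}.

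For the $\tilde X$-bound, I would reuse the expression derived in Proposition \ref{DirecX}: from (\ref{tildeY}) the direction becomes $d_{\tilde X_k}^{(l)} = -H_{\rho_1}^{-1}[R_{U_{\tilde Y_k1}}^{(l)},O](\tilde S^{(l)})^{+}(U_{\tilde Y_k}^{(l)})^*$, where $R_{U_{\tilde Y_k1}}^{(l)} = \nabla_{\tilde X_k}h_k(\tilde X_k^{(l)},\tilde Y_k^{(l)})U_{\tilde Y_k1}^{(l)}$. Because $U_{\tilde Y_k}^{(l)}$ is unitary, right-multiplication by $(U_{\tilde Y_k}^{(l)})^*$ preserves the Frobenius norm, reducing the bound to $\|H_{\rho_1}^{-1}R_{U_{\tilde Y_k1}}^{(l)}\mathrm{diag}(\sigma_{\tilde Y_kl1}^{-2},\ldots,\sigma_{\tilde Y_klt}^{-2})\|_F^2$. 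Applying the submultiplicative inequality $\|MAN\|_F\le\|M\|_2\|A\|_F\|N\|_2$, together with the fact that $\|H_{\rho_1}^{-1}\|_2\le 1$ (which follows from Proposition \ref{KKProp}, since $\lambda_{\min}(H^\top H)=0$ implies $\lambda_{\min}(H_{\rho_1})\ge 1$), and noting that the spectral norm of the diagonal matrix is $\sigma_{\tilde Y_klt}^{-2}$, yields $\|d_{\tilde X_k}^{(l)}\|_F^2\le\sigma_{\tilde Y_klt}^{-4}\|R_{U_{\tilde Y_k1}}^{(l)}\|_F^2$, which gives the first inequality in (\ref{XXYYZZ}).

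The $\tilde Y$-bound is the trickier half and is where I expect the main obstacle. Starting from (\ref{XIHH})--(\ref{Sigmal}), I would write $d_{\tilde Y_k}^{(l)} = -V_{\tilde X_k1}^{(l)}(\Sigma_1^{(l)})^{-1}R_{V_{\tilde X_k1}}^{(l)}$, where $R_{V_{\tilde X_k1}}^{(l)}=(V_{\tilde X_k1}^{(l)})^*\nabla_{\tilde Y_k}h_k(\tilde X_k^{(l+1)},\tilde Y_k^{(l)})$. Since $V_{\tilde X_k1}^{(l)}$ has orthonormal columns, $\|d_{\tilde Y_k}^{(l)}\|_F\le\|(\Sigma_1^{(l)})^{-1}\|_2\|R_{V_{\tilde X_k1}}^{(l)}\|_F$. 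The obstacle is bounding $\|(\Sigma_1^{(l)})^{-1}\|_2$ cleanly: here I would invoke Lemma \ref{ASHermitian}(2) on $\Sigma_1^{(l)}=(S_2^{(l)})^\top(U_{\tilde X_k1}^{(l)})^*H_{\rho_1}U_{\tilde X_k1}^{(l)}S_2^{(l)}$, taking $B=S_2^{(l)}$ (whose smallest singular value is $\sigma_{\tilde X_kls}$) and $A=(U_{\tilde X_k1}^{(l)})^*H_{\rho_1}U_{\tilde X_k1}^{(l)}$, whose smallest eigenvalue is at least $\lambda_{\min}(H_{\rho_1})\ge 1$ because $U_{\tilde X_k1}^{(l)}$ has orthonormal columns. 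This gives $\lambda_{\min}(\Sigma_1^{(l)})\ge\sigma_{\tilde X_kls}^2$, hence $\|(\Sigma_1^{(l)})^{-1}\|_2\le\sigma_{\tilde X_kls}^{-2}$, and squaring yields the second inequality in (\ref{XXYYZZ}).

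The remaining work is purely bookkeeping of the scalar factor $\beta_{kl}^2/(L_{k2}^{(l)})^2$ carried from the update rule. No new technical machinery beyond Proposition \ref{KKProp} and Lemmas \ref{SVPMPI} and \ref{ASHermitian} is needed, and the calculation parallels, step by step, the descent-direction estimates already established in Propositions \ref{DirecX} and \ref{DirecY}.
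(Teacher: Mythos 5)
Your proof is correct and follows essentially the same route as the paper's: substitute the update rule (\ref{updataXYW1}), express the Moore--Penrose inverses through the SVDs of $\tilde Y_k^{(l)}$ and $\tilde X_k^{(l+1)}$, and bound the result by the smallest positive singular values together with $\lambda_{\min}(H_{\rho_1})\geq 1$. The paper compresses all of this into writing the squared norm as a trace and then appealing to Lemma \ref{ASHermitian}, so your version is simply a more explicit account of the same argument.
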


\begin{proof}
Since $$\tilde X_k^{(l+1)}-\tilde X_k^{(l)}=-(\alpha_{kl}/L_{k1}^{(l)})H_{\rho_1}^{-1}\nabla_{\tilde X_k}h_k(\tilde X_k^{(l)},\tilde Y_k^{(l)})(\tilde Y_k^{(l)}(\tilde Y_k^{(l)})^*)^+,$$ it holds that
$$
\begin{array}{l}
\|\tilde X_k^{(l+1)}-\tilde X_k^{(l)}\|_F^2\\
=\frac{\alpha_{kl}^2}{(L_{k1}^{(l)})^2}\langle H_{\rho_1}^{-1}\nabla_{\tilde X_k}h_k(\tilde X_k^{(l)},\tilde Y_k^{(l)})(\tilde Y_k^{(l)}(\tilde Y_k^{(l)})^*)^+,H_{\rho_1}^{-1}\nabla_{\tilde X_k}h_k(\tilde X_k^{(l)},\tilde Y_k^{(l)})(\tilde Y_k^{(l)}(\tilde Y_k^{(l)})^*)^+\rangle\\
=\frac{\alpha_{kl}^2}{(L_{k1}^{(l)})^2}{\rm tr}((\tilde Y_k^{(l)}(\tilde Y_k^{(l)})^*)^+(\nabla_{\tilde X_k}h_k(\tilde X_k^{(l)},\tilde Y_k^{(l)}))^* H_{\rho_1}^{-2}\nabla_{\tilde X_k}h_k(\tilde X_k^{(l)},\tilde Y_k^{(l)})(\tilde Y_k^{(l)}(\tilde Y_k^{(l)})^*)^+).
\end{array}
$$
Consequently, by Lemma \ref{ASHermitian}, the first inequality in (\ref{XXYYZZ}) follows. The second  inequality can be proved similarly.
\end{proof}

\begin{proposition}\label{inWWk}
For every $k\in [m_3]$ and $Z_k^{(l+1)}$, let $W_k^{(l+1)}$ be the optimal solution of (\ref{SunkWETP-optim2}) with objective function being $g_{k}(W_k)=\frac{1}{2}(\|Z_k^{(l+1)}-W_{k}\|_F^2+\rho_2\|W_k K^\top\|^2_F+\mu\|W_k\|^2_F)$. Then it holds that
$$
f_0(\mathcal{X}^{(l+1)},\mathcal{Y}^{(l+1)},\mathcal{W}^{(l+1)})\leq f_0(\mathcal{X}^{(l+1)},\mathcal{Y}^{(l+1)},\mathcal{W}^{(l)}),
$$
where $\mathcal{W}^{(l+1)}={\rm bvfold}\big(((W_1^{(l+1)})^\top,(W_2^{(l+1)})^\top,\ldots,(W_{m_3}^{(l+1)})^\top)^\top\big)$.
\end{proposition}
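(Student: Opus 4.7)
The plan is to exploit the fact, already built into the model's reformulation, that $f_0$ decomposes across the frontal slices of $\mathcal{W}$ and that the subproblem solved in step~4 of TCTF2R is exactly the per-slice minimization. Once this separation is made explicit, the inequality becomes a direct consequence of the optimality of each $W_k^{(l+1)}$ against the feasible competitor $W_k^{(l)}$.

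First I would express $f_0$ directly in terms of the frontal slices $W_k$ of $\mathcal{W}$. Using $\|\mathcal{A}\|_F^2=\sum_{k=1}^{m_3}\|A_k\|_F^2$ applied to $\mathcal{X}^{(l+1)}*\mathcal{Y}^{(l+1)}-\mathcal{W}$ and to $\mathcal{W}$, together with the mode-2 analogue of (\ref{HunfZ1}),
\[
\|K\cdot{\rm unfold}(\mathcal{W},2)\|_F^2=\sum_{k=1}^{m_3}\|W_kK^\top\|_F^2,
\]
I would rewrite
\[
f_0(\mathcal{X}^{(l+1)},\mathcal{Y}^{(l+1)},\mathcal{W})=\Phi(\mathcal{X}^{(l+1)},\mathcal{Y}^{(l+1)})+\sum_{k=1}^{m_3}g_k(W_k),
\]
where $\Phi(\mathcal{X},\mathcal{Y})=\frac{\rho_1}{2}\|H\cdot{\rm unfold}(\mathcal{X}*\mathcal{Y},1)\|_F^2$ is independent of $\mathcal{W}$, and $g_k$ is precisely the objective of the subproblem (\ref{SunkWETP-optim2}) at the current right-hand side $Z_k=Z_k^{(l+1)}$.

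Next, subtracting the values at $\mathcal{W}=\mathcal{W}^{(l+1)}$ and $\mathcal{W}=\mathcal{W}^{(l)}$ cancels $\Phi$, so the claim reduces to
\[
\sum_{k=1}^{m_3}\bigl[g_k(W_k^{(l+1)})-g_k(W_k^{(l)})\bigr]\leq 0.
\]
By construction in step~4 of TCTF2R, equivalently by the closed form (\ref{Wkiom}), each $W_k^{(l+1)}$ is the unique minimizer of (\ref{SunkWETP-optim2}) with $Z_k=Z_k^{(l+1)}$, and the same step enforces $P_{\Omega_k}(W_k^{(l+1)})=P_{\Omega_k}(G_k)$ at every iteration. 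Inductively, then, $W_k^{(l)}$ is feasible for the very subproblem that $W_k^{(l+1)}$ solves, so optimality of $W_k^{(l+1)}$ yields $g_k(W_k^{(l+1)})\leq g_k(W_k^{(l)})$ slice by slice, and summing over $k$ gives the desired monotonicity.

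The only subtle point, and what I expect to be the main obstacle, is the bookkeeping between the real representation $(W_k)$ used in the subproblems and the Fourier-domain representation $(\tilde W_k)$ that appears in the equivalent form (\ref{fun-XYW}) of $f_0$. One must check that updating $W_k^{(l+1)}$ in the original real space and then lifting to $\tilde W_k^{(l+1)}$ through (\ref{bar BA-k}) produces the same value of $f_0$ whether it is computed directly or via its Fourier-domain form; this is exactly the content of identities (\ref{XYZ})--(\ref{KunfZ}), so the decomposition above is legitimate and no further machinery is required.
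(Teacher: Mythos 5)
Your proposal is correct and follows essentially the same route as the paper: both decompose $f_0$ into the $\mathcal{W}$-independent term $\frac{\rho_1}{2}\sum_{k}\|HZ_k^{(l+1)}\|_F^2$ plus $\sum_k g_k(W_k)$, and then invoke the per-slice optimality $g_k(W_k^{(l+1)})\leq g_k(W_k^{(l)})$, with $W_k^{(l)}$ feasible because the constraint $P_{\Omega_k}(W_k)=P_{\Omega_k}(G_k)$ is enforced at every iteration. Your additional remarks on the real/Fourier bookkeeping are sound but not needed beyond what the identities already established in Section 3.3 provide.
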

\begin{proof}
Since $g_{k}(W^{(l+1)}_k)\leq g_{k}(W^{(l)}_k)$, we have
$$
\begin{array}{lll}
f_0(\mathcal{X}^{(l+1)},\mathcal{Y}^{(l+1)},\mathcal{W}^{(l+1)})&=&\displaystyle\sum_{k=1}^{m_3}g_{k}(W^{(l+1)}_k)+\frac{\rho_1}{2}\sum_{k=1}^{m_3}\|HZ_k^{(l+1)}\|_F^2\\
&\leq&\displaystyle\sum_{k=1}^{m_3}g_{k}(W^{(l)}_k)+\frac{\rho_1}{2}\sum_{k=1}^{m_3}\|HZ_k^{(l+1)}\|_F^2\\
&=&\displaystyle f_0(\mathcal{X}^{(l+1)},\mathcal{Y}^{(l+1)},\mathcal{W}^{(l)}).
\end{array}$$
We obtain the desired result and complete the proof.
\end{proof}
\begin{theorem}\label{GlobConvergM}
 For every $k\in [m_3]$, suppose that $\nabla_{\tilde X_k}h_k(\tilde X_k^{(l)},\tilde Y_k^{(l)})U_{\tilde Y_k1}^{(l)}\neq0$ and $(V_{\tilde X_k1}^{(l)})^*\nabla_{\tilde Y_k}h_k(\tilde X_k^{(l+1)},\tilde Y_k^{(l)})\neq0$ for $l=0,1,2,\ldots$, where $U^{(l)}_{\tilde Y_k1}$ and $V_{\tilde X_k1}^{(l)}$ are defined in Proposition \ref{XX2YY}. Then, the sequences  $\{\tilde{X}_k^{(l)}\tilde{Y}_k^{(l)}\}$ and $\{\tilde{W}_k^{(l)}\}$ are both bounded. Moreover, for every accumulation point $(\hat{W}_{k\star},\tilde{W}_{k\star})$ of $\{(\tilde X_k^{(l)}\tilde Y_k^{(l)},\tilde W_k^{(l)})\}_{l=1}^\infty$, there exists $\tilde X_{k\star}\in \mathbb{R}^{m_1\times r}$ and $\tilde Y_{k\star}\in \mathbb{R}^{r\times m_2}$ such that $\hat{W}_{k\star}=\tilde X_{k\star}\tilde Y_{k\star}$ and $\{f_0(\mathcal{X}^{(l)},\mathcal{Y}^{(l)},\mathcal{W}^{(l)})\}$  monotonically converges to $\tilde f_{\star}:=f(\mathcal{X}_{\star},\mathcal{Y}_{\star},\mathcal{W}_{\star})$ on the whole sequence.
\end{theorem}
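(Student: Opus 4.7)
The plan is to establish the theorem in three stages: monotone decrease of the objective, boundedness of the relevant subsequences, and finally the factorization and identification of the limit value.

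First, I would prove that $\{f_0(\mathcal{X}^{(l)},\mathcal{Y}^{(l)},\mathcal{W}^{(l)})\}$ is non-increasing by analyzing each of the three block updates separately in the Fourier domain. For the $\tilde X_k$ step, the partial function $\tilde X_k\mapsto h_k(\tilde X_k,\tilde Y_k^{(l)})$ is a quadratic whose Wirtinger gradient in $\tilde X_k$ has Lipschitz constant $L^{(l)}_{k1}=\|H_{\rho_1}\|_2\|\tilde Y_k^{(l)}(\tilde Y_k^{(l)})^*\|_2$ (which is exactly the scaling used in the update). Invoking the descent lemma and combining it with Proposition \ref{DirecX} (which lower-bounds the decrease via $-\langle d_{\tilde X_k}^{(l)},\nabla_{\tilde X_k}h_k\rangle$) and Proposition \ref{XX2YY} (which upper-bounds $\|\tilde X_k^{(l+1)}-\tilde X_k^{(l)}\|_F^2$), I get
\[
h_k(\tilde X_k^{(l+1)},\tilde Y_k^{(l)})\le h_k(\tilde X_k^{(l)},\tilde Y_k^{(l)})-c_{kl}^{X}\,\|\nabla_{\tilde X_k}h_k(\tilde X_k^{(l)},\tilde Y_k^{(l)})U_{\tilde Y_k1}^{(l)}\|_F^2
\]
for a suitable positive constant $c_{kl}^X$, provided $\alpha_{kl}$ is small enough relative to the ratio of singular values of $\tilde Y_k^{(l)}$ appearing in Propositions \ref{DirecX} and \ref{XX2YY}. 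A symmetric argument handles the $\tilde Y_k$ update. Finally, Proposition \ref{inWWk} gives the decrease from the $W$-update. Summing over $k\in[m_3]$ and using identity (\ref{fun-XYW}) yields $f_0(\cdot^{(l+1)})\le f_0(\cdot^{(l)})$, and since $f_0\ge 0$, the sequence $\{f_0(\cdot^{(l)})\}$ converges monotonically to some $\tilde f_\star\ge 0$.

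Second, I would deduce boundedness from this decrease. Because $\frac{\mu}{2}\|\mathcal{W}^{(l)}\|_F^2\le f_0(\cdot^{(l)})\le f_0(\cdot^{(0)})$, the sequence $\{\mathcal{W}^{(l)}\}$ is bounded; by identity (\ref{tildeXY}), so is each $\{\tilde W_k^{(l)}\}$. The bound $\frac{1}{2}\|\mathcal{X}^{(l)}*\mathcal{Y}^{(l)}-\mathcal{W}^{(l)}\|_F^2\le f_0(\cdot^{(l)})\le f_0(\cdot^{(0)})$ combined with the triangle inequality controls $\|\mathcal{X}^{(l)}*\mathcal{Y}^{(l)}\|_F$, and identity (\ref{XYZ}) then gives the boundedness of $\{\tilde X_k^{(l)}\tilde Y_k^{(l)}\}$ for each $k$.

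Third, by the Bolzano--Weierstrass theorem, from the bounded sequence $\{(\tilde X_k^{(l)}\tilde Y_k^{(l)},\tilde W_k^{(l)})\}$ I can extract a convergent subsequence with limit $(\hat W_{k\star},\tilde W_{k\star})$. Since each product $\tilde X_k^{(l)}\tilde Y_k^{(l)}$ has rank at most $r$ and the set $\{A\in\mathbb{C}^{m_1\times m_2}:\mathrm{rank}(A)\le r\}$ is closed, $\mathrm{rank}(\hat W_{k\star})\le r$. Performing an SVD of $\hat W_{k\star}$ and truncating/padding to inner dimension $r$ produces $\tilde X_{k\star}\in\mathbb{C}^{m_1\times r}$ and $\tilde Y_{k\star}\in\mathbb{C}^{r\times m_2}$ with $\hat W_{k\star}=\tilde X_{k\star}\tilde Y_{k\star}$. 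Since $f_0$ (equivalently its Fourier-domain form (\ref{fun-XYW})) is a polynomial, hence continuous, the value at the limit tuple $(\mathcal{X}_\star,\mathcal{Y}_\star,\mathcal{W}_\star)$ obtained along this subsequence equals $\tilde f_\star$; combined with the already-established monotone convergence of the whole sequence, this shows $f_0(\cdot^{(l)})\to f_0(\mathcal{X}_\star,\mathcal{Y}_\star,\mathcal{W}_\star)=\tilde f_\star$ on the whole sequence.

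The chief obstacle will be the first stage: formulating the descent lemma rigorously in the complex Wirtinger setting and, more delicately, identifying admissible ranges for the line-search parameters $\alpha_{kl},\beta_{kl}$ when the iterates $\tilde Y_k^{(l)}$ or $\tilde X_k^{(l+1)}$ are rank-deficient so that $\sigma_{\tilde Y_k lt}$ or $\sigma_{\tilde X_k ls}$ can be small. Here the use of the Moore--Penrose inverse in the search directions, together with the projected-gradient interpretation arising from Propositions \ref{DirecX}, \ref{DirecY}, and \ref{XX2YY}, is essential to keep the decrease controlled by exactly the quantities that appear in the non-zero hypotheses of the theorem.
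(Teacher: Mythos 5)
Your proposal follows essentially the same route as the paper's proof: monotone decrease of $f_0$ from the descent property of the block updates plus Proposition \ref{inWWk}, boundedness of $\{\tilde W_k^{(l)}\}$ via the coercive term $\frac{\mu}{2}\|\mathcal{W}\|_F^2$ and hence of $\{\tilde X_k^{(l)}\tilde Y_k^{(l)}\}$ via the fitting term, then Bolzano--Weierstrass, closedness of the rank-$\le r$ set to factor $\hat W_{k\star}$, and continuity of $f_0$ (which depends on $\mathcal{X},\mathcal{Y}$ only through the product) to identify the limit value. The only difference is that you make the sufficient-decrease step explicit via the descent lemma and step-size conditions, where the paper simply invokes the descent-direction property (deferring the quantitative estimate to its later Proposition \ref{Prop4}); this is a refinement rather than a different argument.
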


\begin{proof} It is obvious that $\{f_0^{(l)}:=f_0(\mathcal{X}^{(l)},\mathcal{Y}^{(l)},\mathcal{W}^{(l)})\}$ is bounded below with zero. Moreover, from the given condition, we know that $d_{\tilde X_k}^{(l)}$ and $d_{\tilde Y}^{(l)}$ are descent directions of $h_k$ at $(\tilde X_k^{(l)},\tilde Y_k^{(l)})$ and  $(\tilde X_k^{(l+1)},\tilde Y_k^{(l)})$, respectively. Hence, it holds that
$$
h_k(\tilde X_k^{(l+1)},\tilde Y_k^{(l+1)})\leq h_k(\tilde X_k^{(l+1)},\tilde Y_k^{(l)})\leq h_k(\tilde X_k^{(l)},\tilde Y_k^{(l)}),
$$
which implies
$$
f_0(\mathcal{X}^{(l+1)},\mathcal{Y}^{(l+1)},\mathcal{W}^{(l)})\leq f_0(\mathcal{X}^{(l)},\mathcal{Y}^{(l)},\mathcal{W}^{(l)}).
$$
By this and Proposition \ref{inWWk}, we know that
$$
f_0(\mathcal{X}^{(l+1)},\mathcal{Y}^{(l+1)},\mathcal{W}^{(l+1)})\leq f_0(\mathcal{X}^{(l)},\mathcal{Y}^{(l)},\mathcal{W}^{(l)}),
$$
that is, the sequence $\{f_0^{(l)}\}$ decreases monotonically. On the other hand, form the structure of $f_0$, it is obvious that the function $f_0$ is coercive with respect to  $\mathcal{W}$, which implies, together with the fact that $\{f_0^{(l)}\}$ is non-increasing, that $\{\tilde W_k^{(l)}\}$ is bounded for every $k\in [m_3]$, and hence $\{\tilde X_k^{(l)}\tilde Y_k^{(l)}\}$ is also bounded. Consequently, there exist subsequence $\{\tilde X_k^{(l_j)}\tilde Y_k^{(l_j)}\}$ of $\{\tilde X_k^{(l)}\tilde Y_k^{(l)}\}$ and subsequence $\{\tilde W_k^{(l_j)}\}$ of $\{\tilde W_k^{(l)}\}$, such that
$$
\tilde X_k^{(l_j)}\tilde Y_k^{(l_j)}\rightarrow \hat{W}_{k\star}~~{\rm and}~~\tilde W_k^{(l_j)}\rightarrow \tilde{W}_{k\star}, ~~~{\rm for }~~k=1,2,\ldots,m_3,
$$
as $l_j\rightarrow \infty$. It is clear that ${\rm rank}(\tilde X_k^{(l_j)})\leq r$ and ${\rm rank}(\tilde Y_k^{(l_j)})\leq r$, which implies ${\rm rank}(\hat{W}_{k\star})\leq r$. Hence, there exist $\tilde{X}_{k\star}\in \mathbb{R}^{m_1\times r}$ and $\tilde{Y}_{k\star}\in \mathbb{R}^{r\times m_2}$ such that $\hat{W}_{k\star}=\tilde{X}_{k\star}\tilde{Y}_{k\star}$. Therefore, exist there $(\mathcal{X}_{\star},\mathcal{Y}_{\star},\mathcal{W}_{\star})$ such that $\{f_0^{(l)}\}\rightarrow  f_{0\star}:=f_0(\mathcal{X}_{\star},\mathcal{Y}_{\star},\mathcal{W}_{\star})$ on the whole sequence. We obtain the desired result and complete the proof.
\end{proof}

In what follows, we study the convergence property of TCTF2R Algorithm. We always assume that for every $k\in [m_3]$, it holds that $\nabla_{\tilde X_k}h_k(\tilde X_k^{(l)},\tilde Y_k^{(l)})U^{(l)}_{\tilde Y_k1}\neq0$ and $(V_{\tilde X_k1}^{(l)})^*\nabla_{\tilde Y_k}h_k(\tilde X_k^{(l+1)},\tilde Y_k^{(l)})\neq0$ for any $l=0,1,\ldots$, where $U^{(l)}_{\tilde Y_k1}$ and $V_{\tilde X_k1}^{(l)}$ are defined in Propositions \ref{DirecX} and \ref{DirecY}, respectively. From (\ref{hkXYDer}), it is easy to see that, for every $k\in [m_3]$ and $(\tilde X_k,\tilde Y_k)\in \mathbb{C}^{m_1\times r}\times \mathbb{C}^{r\times m_2}$, the following inequalities hold.
\begin{equation}\label{XYWLipch}
\left\{\begin{array}{lll}
\|\nabla_{\tilde X_k} h_k(\tilde X_k,\tilde Y_k)-\nabla_{\tilde X_k} h_k(\tilde X_k+\triangle\tilde X,\tilde Y_k)\|_F\leq L_{1}(\tilde Y_k)\|\triangle\tilde X\|_F,~\forall~\triangle\tilde X\in \mathbb{C}^{m_1\times r},\\
\|\nabla_{\tilde Y_k} h_k(\tilde X_k,\tilde Y_k)-\nabla_{\tilde Y_k} h_k(\tilde X_k,\tilde Y_k+\triangle\tilde Y)\|_F\leq L_{2}(\tilde X_k)\|\triangle\tilde Y\|_F,~\forall~\triangle\tilde Y\in \mathbb{C}^{r\times m_1},\\
%\|\nabla_{\tilde W} f(\tilde Z)-\nabla_{\tilde W} f(\tilde Z+(O,O,\triangle\tilde W))\|_F\leq L_{3}\|\triangle\tilde W\|_F,~\forall~\triangle\tilde W\in \mathbb{C}^{m_1\times m_2},
\end{array}\right.
\end{equation}
where $L_{1}(\tilde Y_k)=\|H_{\rho_1}\|_2\|\tilde Y_k\tilde Y_k^*\|_2$ and $L_{2}(\tilde X_k)=\|\tilde X_k^*H_{\rho_1}\tilde X_k\|_2$.

For the sake of completeness, we first recall the following lemma, which is a block matrix version of the well-known descent lemma \cite{Bert99} and can be proved similarly, since (\ref{XYWLipch}) hold.

\begin{lemma}\label{lipsh}
For every $k\in [m_3]$ and $\tilde W_k\in C^{m_1\times m_2}$, let $h_k: \mathbb{C}^{m_1\times r}\times \mathbb{C}^{r\times m_2}\rightarrow \mathbb{R}$ be defined in (\ref{kXYETP-optim20}). For any given $(\tilde X_k,\tilde Y_k)\in \mathbb{C}^{m_1\times r}\times \mathbb{C}^{r\times m_2}$, we have
$$
\left\{\begin{array}{lll}
h_k(\tilde X_k+\triangle\tilde X_k,\tilde Y_k)\leq h_k(\tilde X_k,\tilde Y_k)+\langle \nabla_{\tilde X_k} h_k(\tilde X_k,\tilde Y_k),\triangle\tilde X_k\rangle+\frac{L_{1}(\tilde Y_k)}{2}\|\triangle\tilde X_k\|_F^2,\\
h_k(\tilde X_k,\tilde Y_k+\triangle\tilde Y_k)\leq h_k(\tilde X_k,\tilde Y_k)+\langle \nabla_{\tilde Y_k} h_k(\tilde X_k,\tilde Y_k),\triangle\tilde Y_k\rangle+\frac{L_{2}(\tilde X_k)}{2}\|\triangle\tilde Y_k\|_F^2
\end{array}\right.$$
for any $(\triangle\tilde X_k, \triangle\tilde Y_k)\in \mathbb{C}^{m_1\times r}\times\mathbb{C}^{r\times m_2}$, where $L_{1}(\tilde Y_k)$, and $L_{2}(\tilde X_k)$ are defined in (\ref{XYWLipch}).
\end{lemma}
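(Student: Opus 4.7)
The plan is to prove each of the two inequalities by the classical descent-lemma argument, with the only non-routine ingredient being the proper handling of the Wirtinger gradients from Theorem \ref{PDconj}. I will describe the argument for the first inequality in full; the second is verbatim with the roles of $\tilde X_k$ and $\tilde Y_k$ interchanged and $L_1(\tilde Y_k)$ replaced by $L_2(\tilde X_k)$.

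First I would fix $\tilde Y_k$ and view $h_k(\cdot,\tilde Y_k)$ as a real-valued function of its argument via the decomposition $\tilde X_k = \tilde X_{k,re} + {\bf i}\,\tilde X_{k,im}$. Define $\phi:[0,1]\to\mathbb{R}$ by $\phi(t):=h_k(\tilde X_k + t\triangle\tilde X_k,\tilde Y_k)$. Since $h_k$ is polynomial in its matrix entries, $\phi$ is smooth, and the fundamental theorem of calculus gives
\begin{equation*}
h_k(\tilde X_k + \triangle\tilde X_k,\tilde Y_k) - h_k(\tilde X_k,\tilde Y_k) \;=\; \phi'(0) + \int_0^1 \bigl(\phi'(t)-\phi'(0)\bigr)\, dt.
\end{equation*}
By Theorem \ref{PDconj} combined with the chain rule, $\phi'(t)$ equals the real Frobenius pairing $\langle\nabla_{\tilde X_k}h_k(\tilde X_k+t\triangle\tilde X_k,\tilde Y_k),\triangle\tilde X_k\rangle$, where $\langle A,B\rangle$ denotes $\mathrm{Re}\,\mathrm{tr}(A^*B)$. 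In particular $\phi'(0)=\langle\nabla_{\tilde X_k}h_k(\tilde X_k,\tilde Y_k),\triangle\tilde X_k\rangle$, which is precisely the linear term in the claimed inequality.

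For the remainder, apply Cauchy--Schwarz and the Lipschitz estimate (\ref{XYWLipch}):
\begin{equation*}
|\phi'(t)-\phi'(0)| \;\le\; \bigl\|\nabla_{\tilde X_k}h_k(\tilde X_k+t\triangle\tilde X_k,\tilde Y_k)-\nabla_{\tilde X_k}h_k(\tilde X_k,\tilde Y_k)\bigr\|_F\,\|\triangle\tilde X_k\|_F \;\le\; L_1(\tilde Y_k)\,t\,\|\triangle\tilde X_k\|_F^2.
\end{equation*}
Integrating over $[0,1]$ yields the remainder bound $\frac{L_1(\tilde Y_k)}{2}\|\triangle\tilde X_k\|_F^2$, and substitution into the FTC identity completes the proof of the first inequality. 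The second is obtained by applying the same argument to $t\mapsto h_k(\tilde X_k,\tilde Y_k+t\triangle\tilde Y_k)$ with the Lipschitz constant $L_2(\tilde X_k)$.

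The main obstacle is purely notational: one must verify that the Wirtinger gradient $\nabla_{\tilde X_k}h_k$, defined by treating $\overline{\tilde X_k}$ as constant as in Theorem \ref{PDconj}, pairs with the real perturbation $\triangle\tilde X_k$ under the Frobenius inner product to yield the real directional derivative of $h_k$ viewed as a function of $(\tilde X_{k,re},\tilde X_{k,im})$. Once this identification is made, the remainder of the argument is literally the classical descent lemma of \cite{Bert99} applied to each block coordinate separately, which is exactly why the paper remarks that the result ``can be proved similarly.''
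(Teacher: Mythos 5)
Your proof is correct and is exactly the route the paper intends: the paper gives no explicit argument for this lemma, merely noting it is the classical descent lemma of \cite{Bert99} applied blockwise, which follows from the Lipschitz estimates (\ref{XYWLipch}) — precisely the fundamental-theorem-of-calculus argument you write out. Your careful identification of the real directional derivative with the pairing $\mathrm{Re}\,\mathrm{tr}(A^{*}B)$ against the Wirtinger gradient of (\ref{hkXYDer}) is a welcome clarification of a point the paper leaves implicit.
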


\begin{proposition}\label{Prop4}
Let $\{(\tilde X_k^{(l)},\tilde Y_k^{(l)},\tilde W_k^{(l)})\}$ be a sequence generated by TCTF2R Algorithm. Then, for every $k\in [m_3]$, it holds that
\begin{equation}\label{ineqZZl}
\begin{array}{l}
h_k(\tilde X_k^{(l)},\tilde Y_k^{(l)})-h_k(\tilde X_k^{(l+1)},\tilde Y_k^{(l+1)})\\
\geq \kappa_{kl1}\left(\|\nabla_{\tilde X_k}h_k(\tilde X_k^{(l)},\tilde Y_k^{(l)})U_{Y_k1}^{(l)}\|_F^2+\|(V_{\tilde X_k1}^{(l)})^*\nabla_{\tilde Y_k}h_k(\tilde X_k^{(l+1)},\tilde Y_k^{(l)})\|_F^2\right),
\end{array}
\end{equation}
where
$$
\begin{array}{lll}
\kappa_{kl1}&=&\min\displaystyle\left\{\frac{\alpha_{kl}}{L_{k1}^{(l)}}\left(\frac{1}{(1+4\rho_1)\sigma^2_{\tilde Y_kl1}}-\frac{\alpha_{kl}}{2\sigma^4_{\tilde Y_klt}}\right),\frac{\beta_{kl}}{L_{k2}^{(l)}}\left(\frac{1}{(1+4\rho_1)\sigma^2_{\tilde X_kl1}}-\frac{\beta_{kl}}{2\sigma_{\tilde X_kls}^4}\right)\right\}.
\end{array}$$
\end{proposition}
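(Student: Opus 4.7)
The plan is to prove the sufficient-decrease inequality by splitting the two-step decrease and applying the block-wise descent lemma to each coordinate update in turn, combining the resulting bounds with the descent-direction estimates already established.

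First I would write
\begin{equation*}
h_k(\tilde X_k^{(l)},\tilde Y_k^{(l)})-h_k(\tilde X_k^{(l+1)},\tilde Y_k^{(l+1)})
=\bigl[h_k(\tilde X_k^{(l)},\tilde Y_k^{(l)})-h_k(\tilde X_k^{(l+1)},\tilde Y_k^{(l)})\bigr]
+\bigl[h_k(\tilde X_k^{(l+1)},\tilde Y_k^{(l)})-h_k(\tilde X_k^{(l+1)},\tilde Y_k^{(l+1)})\bigr]
\end{equation*}
and handle the two bracketed terms separately. For the first bracket, I would apply the first inequality of Lemma \ref{lipsh} at $(\tilde X_k^{(l)},\tilde Y_k^{(l)})$ with increment $\triangle\tilde X_k=\tilde X_k^{(l+1)}-\tilde X_k^{(l)}$, noting that the Lipschitz constant $L_1(\tilde Y_k^{(l)})$ coincides with $L_{k1}^{(l)}$ by definition in (\ref{updataXYW1}). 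For the second bracket, I would apply the second inequality of Lemma \ref{lipsh} at $(\tilde X_k^{(l+1)},\tilde Y_k^{(l)})$ with $\triangle\tilde Y_k=\tilde Y_k^{(l+1)}-\tilde Y_k^{(l)}$, again matching $L_2(\tilde X_k^{(l+1)})=L_{k2}^{(l)}$.

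Next I would substitute the descent-direction bounds into the linear terms: the inner product $\langle \nabla_{\tilde X_k}h_k(\tilde X_k^{(l)},\tilde Y_k^{(l)}),\tilde X_k^{(l+1)}-\tilde X_k^{(l)}\rangle$ is controlled by (\ref{XXYY1}) from Proposition \ref{DirecX}, and similarly $\langle \nabla_{\tilde Y_k}h_k(\tilde X_k^{(l+1)},\tilde Y_k^{(l)}),\tilde Y_k^{(l+1)}-\tilde Y_k^{(l)}\rangle$ is controlled by (\ref{XXYY2}) from Proposition \ref{DirecY}. For the quadratic step-size terms, I would use the two inequalities of Proposition \ref{XX2YY} to bound $\|\tilde X_k^{(l+1)}-\tilde X_k^{(l)}\|_F^2$ and $\|\tilde Y_k^{(l+1)}-\tilde Y_k^{(l)}\|_F^2$ by the same projected-gradient norms that appear on the right-hand side of the target inequality. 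After these substitutions the $X$-bracket is bounded above by
\begin{equation*}
-\frac{\alpha_{kl}}{L_{k1}^{(l)}}\Bigl(\frac{1}{(1+4\rho_1)\sigma^2_{\tilde Y_kl1}}-\frac{\alpha_{kl}}{2\sigma^4_{\tilde Y_klt}}\Bigr)\bigl\|\nabla_{\tilde X_k}h_k(\tilde X_k^{(l)},\tilde Y_k^{(l)})U_{Y_k1}^{(l)}\bigr\|_F^2,
\end{equation*}
and the $Y$-bracket has the symmetric analogue with $\alpha_{kl},L_{k1}^{(l)},\sigma_{\tilde Y_k l\cdot}$ replaced by $\beta_{kl},L_{k2}^{(l)},\sigma_{\tilde X_k l\cdot}$ and with $\nabla_{\tilde Y_k}h_k(\tilde X_k^{(l+1)},\tilde Y_k^{(l)})$ in place of the $X$-gradient.

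Finally, I would add the two bracket bounds, lower-bound each scalar coefficient by $\kappa_{kl1}$ using its defining minimum, and factor it out to obtain (\ref{ineqZZl}). The main obstacle I anticipate is bookkeeping: making sure that the Lipschitz constants from Lemma \ref{lipsh}, the coefficients from Propositions \ref{DirecX}--\ref{DirecY}, and the step-size powers from Proposition \ref{XX2YY} combine cleanly so that the quadratic term from the descent lemma exactly cancels against the $\alpha_{kl}/(2\sigma^4_{\tilde Y_klt})$ and $\beta_{kl}/(2\sigma^4_{\tilde X_kls})$ contributions inside the minimum. No new analytical idea is needed beyond the three previously established propositions; this is a careful assembly argument.
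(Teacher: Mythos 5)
Your proposal is correct and follows essentially the same route as the paper's proof: split the decrease into the $X$-update and $Y$-update brackets, apply the descent lemma (Lemma \ref{lipsh}) to each, bound the linear terms via (\ref{XXYY1})--(\ref{XXYY2}) and the quadratic terms via Proposition \ref{XX2YY}, then sum and take the minimum of the two coefficients. The bookkeeping you flag does work out exactly as you anticipate, since $\frac{L_{k1}^{(l)}}{2}\cdot\frac{\alpha_{kl}^2}{(L_{k1}^{(l)})^2\sigma^4_{\tilde Y_klt}}=\frac{\alpha_{kl}}{L_{k1}^{(l)}}\cdot\frac{\alpha_{kl}}{2\sigma^4_{\tilde Y_klt}}$.
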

\begin{proof}
By Lemma \ref{lipsh}, we have
$$
\begin{array}{l}
h_k(\tilde X_k^{(l)},\tilde Y_k^{(l)})-h_k(\tilde X_k^{(l+1)},\tilde Y_k^{(l)})\\
\geq -\langle \nabla_{\tilde X_k}h_k(\tilde X_k^{(l)},\tilde Y_k^{(l)}), \tilde X_k^{(l+1)}-\tilde X_k^{(l)}\rangle-\frac{L_{k1}^{(l)}}{2}\|\tilde X_k^{(l+1)}-\tilde X_k^{(l)}\|_F^2.
\end{array}
$$
Consequently, by (\ref{XXYY1}) and the first express in (\ref{XXYYZZ}), we obtain
$$
\begin{array}{l}
h_k(\tilde X_k^{(l)},\tilde Y_k^{(l)})-h_k(\tilde X_k^{(l+1)},\tilde Y_k^{(l)})\geq \displaystyle \kappa_{X_kl}\|\nabla_{\tilde X_k}h_k(\tilde X_k^{(l)},\tilde Y_k^{(l)})U_{Y_k1}^{(l)}\|_F^2,
\end{array}
$$
where $\kappa_{X_kl}=\frac{\alpha_{kl}}{L_{k1}^{(l)}}\left(\frac{1}{(1+4\rho_1)\sigma^2_{\tilde Y_kl1}}-\frac{\alpha_{kl}}{2\sigma^4_{\tilde Y_klt}}\right)$.
Similarly, we have
$$
\begin{array}{l}
h_k(\tilde X_k^{(l+1)},\tilde Y_k^{(l)})-h_k(\tilde X_k^{(l+1)},\tilde Y_k^{(l+1)})\geq \displaystyle\kappa_{Y_kl}\|(V_{\tilde X_k1}^{(l)})^*\nabla_{\tilde Y_k}h_k(\tilde X_k^{(l+1)},\tilde Y_k^{(l)})\|_F^2,
\end{array}
$$
where $\kappa_{Y_kl}=\frac{\beta_{kl}}{L_{k2}^{(l)}}\left(\frac{1}{(1+4\rho_1)\sigma^2_{\tilde X_kl1}}-\frac{\beta_{kl}}{2\sigma_{\tilde X_kls}^4}\right)$.
By summing the two inequalities above, we obtain the desired result and complete the proof.
\end{proof}

\begin{proposition}\label{Prop5}
Let $\{\tilde Q_k^{(l)}:=(\tilde X_k^{(l)},\tilde Y_k^{(l)},\tilde W_k^{(l)})\}$ be a sequence generated by Algorithm 1. Suppose that for every $k\in [m_3]$, the corresponding level set
$${\rm Lev}(f_k,\tilde Q_k^{(0)}):=\left\{\tilde Q_k\in \mathbb{C}^{m_1\times r}\times \mathbb{C}^{r\times m_2}\times \mathbb{C}^{m_1\times m_2}~|~f_k(\tilde Q_k)\leq f_k(\tilde Q_k^{(0)})\right\}$$ is bounded, where
$$
f_k(\tilde X_k,\tilde Y_k,\tilde W_k)=\frac{1}{2}\big(\|\tilde X_k\tilde Y_k-\tilde W_k\|_F^2+\rho_1\|H\tilde X_k\tilde Y_k\|^2_F+\rho_2\|\tilde W_k K^\top\|^2_F+\mu\|\tilde W_k\|^2_F\big).
$$
Then, for $k\in [m_3]$, it holds that
\begin{equation}\label{ineqZZl}
\begin{array}{l}
h_k(\tilde X_k^{(l)},\tilde Y_k^{(l)})-h_k(\tilde X_k^{(l+1)},\tilde Y_k^{(l+1)})\\
\geq (\kappa_{kl1}/\kappa_{kl2})\left(\|\nabla_{\tilde X_k}h_k(\tilde X_k^{(l)},\tilde Y_k^{(l)})U_{Y_k1}^{(l)}\|_F^2+\|(V_{\tilde X_k1}^{(l)})^*\nabla_{\tilde Y_k}h_k(\tilde X_k^{(l)},\tilde Y_k^{(l)})\|_F^2\right),
\end{array}
\end{equation}
for $l=0,1,\ldots$, where $\kappa_{kl1}$ is defined in Proposition \ref{Prop4}, and
$$
\kappa_{kl2}={\max}\left\{2,1+\frac{2L^2\alpha^2_{kl}}{(L^{(l)}_{k1})^2\sigma^4_{\tilde Y_klt}}\right\},
$$
with $L=\max \{\max \{L_{k1}(\tilde Y), L_{k2}(\tilde X)\}~|~(\tilde X,\tilde Y,\tilde W)\in {\rm Lev}(f_k,\tilde Q_k^{(0)})\}$.
\end{proposition}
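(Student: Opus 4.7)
The plan is to reduce Proposition \ref{Prop5} to Proposition \ref{Prop4} by swapping the point at which the $\tilde Y_k$-gradient is evaluated. Proposition \ref{Prop4} already gives the decrease of $h_k$ in terms of $\|(V_{\tilde X_k1}^{(l)})^*\nabla_{\tilde Y_k}h_k(\tilde X_k^{(l+1)},\tilde Y_k^{(l)})\|_F^2$, whereas here we need the bound in terms of the gradient at the old point $(\tilde X_k^{(l)},\tilde Y_k^{(l)})$. So the goal is to control the discrepancy between these two gradients by the movement $\tilde X_k^{(l+1)}-\tilde X_k^{(l)}$, which is itself controlled by $\|\nabla_{\tilde X_k}h_k(\tilde X_k^{(l)},\tilde Y_k^{(l)})U_{Y_k1}^{(l)}\|_F^2$ thanks to Proposition \ref{XX2YY}.

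First I would establish a cross-variable Lipschitz estimate: using the explicit formula $\nabla_{\tilde Y_k}h_k(\tilde X_k,\tilde Y_k)=\tilde X_k^*H_{\rho_1}\tilde X_k\tilde Y_k-\tilde X_k^*\tilde W_k$ from (\ref{hkXYDer}), expand
\[
\nabla_{\tilde Y_k}h_k(\tilde X_k^{(l+1)},\tilde Y_k^{(l)})-\nabla_{\tilde Y_k}h_k(\tilde X_k^{(l)},\tilde Y_k^{(l)})
\]
in terms of $\Delta\tilde X:=\tilde X_k^{(l+1)}-\tilde X_k^{(l)}$. Because $\{f_0^{(l)}\}$ is non-increasing (by Theorem \ref{GlobConvergM}) the whole sequence $\{(\tilde X_k^{(l)},\tilde Y_k^{(l)},\tilde W_k^{(l)})\}$ stays in ${\rm Lev}(f_k,\tilde Q_k^{(0)})$, which is assumed bounded; hence by the definition of $L$ in the statement, we obtain
\[
\|\nabla_{\tilde Y_k}h_k(\tilde X_k^{(l+1)},\tilde Y_k^{(l)})-\nabla_{\tilde Y_k}h_k(\tilde X_k^{(l)},\tilde Y_k^{(l)})\|_F\le L\,\|\tilde X_k^{(l+1)}-\tilde X_k^{(l)}\|_F.
\]
The bound on $\|\Delta\tilde X\|_F^2$ coming from the first estimate in (\ref{XXYYZZ}) then upgrades this to
\[
\|\nabla_{\tilde Y_k}h_k(\tilde X_k^{(l+1)},\tilde Y_k^{(l)})-\nabla_{\tilde Y_k}h_k(\tilde X_k^{(l)},\tilde Y_k^{(l)})\|_F^2\le \tfrac{L^2\alpha_{kl}^2}{(L_{k1}^{(l)})^2\sigma^4_{\tilde Y_klt}}\|\nabla_{\tilde X_k}h_k(\tilde X_k^{(l)},\tilde Y_k^{(l)})U_{Y_k1}^{(l)}\|_F^2.
\]

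Second, the triangle inequality combined with $(a+b)^2\le 2a^2+2b^2$ and the fact that $V_{\tilde X_k1}^{(l)}$ has orthonormal columns yields
\[
\|(V_{\tilde X_k1}^{(l)})^*\nabla_{\tilde Y_k}h_k(\tilde X_k^{(l)},\tilde Y_k^{(l)})\|_F^2\le 2\|(V_{\tilde X_k1}^{(l)})^*\nabla_{\tilde Y_k}h_k(\tilde X_k^{(l+1)},\tilde Y_k^{(l)})\|_F^2+2\|\nabla_{\tilde Y_k}h_k(\tilde X_k^{(l+1)},\tilde Y_k^{(l)})-\nabla_{\tilde Y_k}h_k(\tilde X_k^{(l)},\tilde Y_k^{(l)})\|_F^2.
\]
Plugging in the preceding estimate, and adding $\|\nabla_{\tilde X_k}h_k(\tilde X_k^{(l)},\tilde Y_k^{(l)})U_{Y_k1}^{(l)}\|_F^2$ to both sides, we get
\[
A+B'\le \kappa_{kl2}\,(A+B),
\]
where $A:=\|\nabla_{\tilde X_k}h_k(\tilde X_k^{(l)},\tilde Y_k^{(l)})U_{Y_k1}^{(l)}\|_F^2$, $B:=\|(V_{\tilde X_k1}^{(l)})^*\nabla_{\tilde Y_k}h_k(\tilde X_k^{(l+1)},\tilde Y_k^{(l)})\|_F^2$, $B':=\|(V_{\tilde X_k1}^{(l)})^*\nabla_{\tilde Y_k}h_k(\tilde X_k^{(l)},\tilde Y_k^{(l)})\|_F^2$, and the coefficient $\max\{2,1+2L^2\alpha_{kl}^2/((L_{k1}^{(l)})^2\sigma^4_{\tilde Y_klt})\}$ on the right is exactly $\kappa_{kl2}$.

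Finally, Proposition \ref{Prop4} gives $h_k(\tilde X_k^{(l)},\tilde Y_k^{(l)})-h_k(\tilde X_k^{(l+1)},\tilde Y_k^{(l+1)})\ge \kappa_{kl1}(A+B)$; dividing the inequality $A+B'\le \kappa_{kl2}(A+B)$ by $\kappa_{kl2}$ and combining produces the claimed lower bound \[
h_k(\tilde X_k^{(l)},\tilde Y_k^{(l)})-h_k(\tilde X_k^{(l+1)},\tilde Y_k^{(l+1)})\ge (\kappa_{kl1}/\kappa_{kl2})(A+B').
\] The main obstacle is the first step: (\ref{XYWLipch}) only provides same-variable Lipschitz constants, so the cross-variable constant $L$ has to be extracted from the explicit expression of $\nabla_{\tilde Y_k}h_k$, with uniformity over the iterations justified by the invariance of the sequence in the bounded level set via the monotone decrease of $f_0^{(l)}$.
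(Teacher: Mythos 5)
Your proposal is correct and follows essentially the same route as the paper's own proof: bound the change in the $\tilde Y_k$-gradient caused by the $\tilde X_k$-update via the Lipschitz constant $L$ on the bounded level set and the first estimate of Proposition \ref{XX2YY}, apply the triangle inequality with $(a+b)^2\le 2a^2+2b^2$ to get $A+B'\le\kappa_{kl2}(A+B)$, and then invoke Proposition \ref{Prop4}. Your explicit attention to the cross-variable Lipschitz step (which the paper dismisses as ``obvious'') is a welcome clarification but not a different argument.
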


\begin{proof}
It is obvious that
$$
\|\nabla h_k(\tilde X_k^{(l)},\tilde Y_k^{(l)})-\nabla h_k(\tilde X_k^{(l+1)},\tilde Y_k^{(l)})\|_F^2\leq L^2\|\tilde X_k^{(l+1)}-\tilde X_k^{(l)}\|_F^2,
$$
which implies, together with the first inequality in (\ref{XXYYZZ}),  that
\begin{equation}\label{DDXXf}
\|\nabla h_k(\tilde X_k^{(l)},\tilde Y_k^{(l)})-\nabla h_k(\tilde X_k^{(l+1)},\tilde Y_k^{(l)})\|_F^2\leq \displaystyle\frac{L^2\alpha^2_{kl}}{(L^{(l)}_{k1})^2\sigma^4_{\tilde Y_klt}}\|\nabla_{\tilde X_k}h_k(\tilde X_k^{(l)},\tilde Y_k^{(l)})U_{Y_k1}^{(l)}\|_F^2.
\end{equation}
Consequently, by using a similar way to that in \cite{BT13}, we have
$$
\begin{array}{l}
\|(V_{\tilde X_k1}^{(l)})^*\nabla_{\tilde Y_k}h_k(\tilde X_k^{(l)},\tilde Y_k^{(l)})\|_F^2\\
~~~\leq\left(\|(V_{\tilde X_k1}^{(l)})^*\big(\nabla_{\tilde Y_k}h_k(\tilde X_k^{(l)},\tilde Y_k^{(l)})-\nabla_{\tilde Y_k}h_k(\tilde X_k^{(l+1)},\tilde Y_k^{(l)})\big)\|\right.\\
~~~~~~~+\left.\|(V_{\tilde X_k1}^{(l)})^*\nabla_{\tilde Y_k}h_k(\tilde X_k^{(l+1)},\tilde Y_k^{(l)})\|\right)^2\\
~~~\leq 2\left(\|\nabla h_k(\tilde X_k^{(l)},\tilde Y_k^{(l)})-\nabla h_k(\tilde X_k^{(l+1)},\tilde Y_k^{(l)})\|_F^2\right.\\
~~~~~~~+\left.\|(V_{\tilde X1}^{(l)})^*\nabla_{\tilde Y_k}h_k(\tilde X_k^{(l+1)},\tilde Y_k^{(l)})\|_F^2\right)\\
~~~\leq\displaystyle\frac{2L^2\alpha^2_{kl}}{(L^{(l)}_{k1})^2\sigma^4_{\tilde Y_klt}}\|\nabla_{\tilde X_k}h_k(\tilde X_k^{(l)},\tilde Y_k^{(l)})U_{Y_k1}^{(l)}\|_F^2+2\|(V_{\tilde X1}^{(l)})^*\nabla_{\tilde Y_k}h_k(\tilde X_k^{(l+1)},\tilde Y_k^{(l)})\|_F^2,
\end{array}
$$
where the last inequality is due to (\ref{DDXXf}). Therefore, we further obtain
$$
\begin{array}{l}
\|\nabla_{\tilde X_k}h_k(\tilde X_k^{(l)},\tilde Y_k^{(l)})U_{Y_k1}^{(l)}\|_F^2+\|(V_{\tilde X_k1}^{(l)})^*\nabla_{\tilde Y_k}h_k(\tilde X_k^{(l)},\tilde Y_k^{(l)})\|_F^2\\
\leq \kappa_{kl2}\left(\|\nabla_{\tilde X_k}h_k(\tilde X_k^{(l)},\tilde Y_k^{(l)})U_{Y_k1}^{(l)}\|_F^2+\|(V_{\tilde X1}^{(l)})^*\nabla_{\tilde Y_k}h_k(\tilde X_k^{(l+1)},\tilde Y_k^{(l)})\|_F^2\right),
\end{array}
$$
which implies, together with Proposition \ref{Prop4},  that we have obtained (\ref{ineqZZl}) and complete the proof.
\end{proof}

\begin{theorem}\label{ThKKT}
Let $\{(\tilde X_k^{(l)},\tilde X_k^{(l)},\tilde W_k^{(l)}\}$ be a sequence generated by TCTF2R Algorithm. Suppose that, for $k\in [m_3]$,  there exits real number $c_k>0$ such that $\kappa_{kl1}/\kappa_{kl2}\geq c_k$ for any $l=0,1,2,\ldots$. Suppose that ${\rm Lev}(f_k,\tilde Q_k^{(0)})$ is bounded for every $k\in [m_3]$. For every convergence subsequence $\{(\tilde X_k^{(l_j)}\tilde Y_k^{(l_j)},\tilde W_k^{(l_j)})\}_{j=1}^\infty$ of $\{(\tilde X_k^{(l)}\tilde Y_k^{(l)},\tilde W_k^{(l)})\}_{l=1}^\infty$ with its limit being $(\hat {W}_{k\star},\tilde{W}_{k\star})$, if the matrix $\hat {W}_{k\star}$ can be decomposed into $\hat {W}_{k\star}=\tilde X_{k\star}\tilde Y_{k\star}$ satisfying $ \tilde X_k^{(l_j)}\rightarrow\tilde X_{k\star},~ \tilde Y_k^{(l_j)}\rightarrow\tilde Y_{k\star}$ as $j\rightarrow \infty$, then it holds that
\begin{equation}\label{XYkKKTstar}
\left\{
\begin{array}{l}
\nabla_{\tilde X}h_k(\tilde X_{k\star},\tilde Y_{k\star},\tilde W_{k\star})U_{\tilde Y_{k\star}1}=0,\\
(V_{\tilde X_{k\star}1})^*\nabla_{\tilde Y_k}h_k(\tilde X_{k\star},\tilde Y_{k\star},\tilde W_{k\star})=0
%\nabla_{\tilde W}f(\tilde X_\star,\tilde Y_\star,\tilde W_\star)=0.
\end{array}
\right.
\end{equation}
for every $k\in [m_3]$, and
\begin{equation}\label{WKKTcon}
\left\{\begin{array}{ll}
(W_{k\star}K_{\mu \rho_2}-\mathcal{X}_\star*\mathcal{Y}_\star(:,:,k))_{ij}=0,~~{\rm for~}k\in [m_3],~(i,j)\in \Omega^C_k,\\
P_{\Omega}(\mathcal{W}_\star)=P_{\Omega}(\mathcal{G}).
\end{array}
\right.
\end{equation}
where $W_{k\star}$ is the $k$-th frontal slice of $\mathcal{W}_{\star}$ and is obtained by (\ref{barAAk}) from $\tilde W_{l\star} (l\in [m_3])$, and
$\mathcal{X}_\star\in \mathbb{R}^{m_1\times r}$ is a tensor whose $k$-th frontal slice is the matrix $X_{k\star}~(k\in [m_3])$ which is obtained from $\tilde X_{k\star}~(k\in [m_3])$ through the formula (\ref{barAAk}), and $\mathcal{Y}_\star\in \mathbb{R}^{r\times m_2}$ is similar.
\end{theorem}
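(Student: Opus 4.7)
The plan is to combine the descent estimate of Proposition~\ref{Prop5} with the convergence of $\{f_0^{(l)}\}$ from Theorem~\ref{GlobConvergM} to obtain summability of the projected gradients, and then pass to the limit along the hypothesized subsequence. Writing each iteration as $\mathcal{W}^{(l)}\to\mathcal{W}^{(l+1)}$ (non-increasing by Proposition~\ref{inWWk}) followed by $(\tilde X,\tilde Y)^{(l)}\to(\tilde X,\tilde Y)^{(l+1)}$ (controlled by Proposition~\ref{Prop5} with $\tilde W^{(l)}$ frozen), I obtain
\[ f_0^{(l)} - f_0^{(l+1)} \ge \frac{c_k}{m_3}\bigl(\|\nabla_{\tilde X_k}h_k(\tilde X_k^{(l)},\tilde Y_k^{(l)})U_{\tilde Y_k1}^{(l)}\|_F^2 + \|(V_{\tilde X_k1}^{(l)})^*\nabla_{\tilde Y_k}h_k(\tilde X_k^{(l)},\tilde Y_k^{(l)})\|_F^2\bigr) \]
for each $k\in[m_3]$, using the uniform bound $\kappa_{kl1}/\kappa_{kl2}\ge c_k>0$. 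Since $\{f_0^{(l)}\}$ converges, telescoping gives summability of the right-hand side in $l$, so both projected-gradient terms tend to zero as $l\to\infty$.

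For (\ref{XYkKKTstar}), along the hypothesized subsequence $l_j$ we have $\tilde X_k^{(l_j)}\to\tilde X_{k\star}$, $\tilde Y_k^{(l_j)}\to\tilde Y_{k\star}$ and $\tilde W_k^{(l_j)}\to\tilde W_{k\star}$. Continuity of the gradient expressions (\ref{hkXYDer}) delivers convergence of the two gradients. Since the columns of $U_{\tilde Y_k1}^{(l)}$ and $V_{\tilde X_k1}^{(l)}$ are orthonormal they lie in a compact set, so a further subsequence extraction yields $U_{\tilde Y_k1}^{(l_{j_i})}\to U_\star$ and $V_{\tilde X_k1}^{(l_{j_i})}\to V_\star$. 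The limit projectors $U_\star U_\star^*$ and $V_\star V_\star^*$ coincide with the orthogonal projectors onto the ranges of $\tilde Y_{k\star}(\tilde Y_{k\star})^*$ and $(\tilde X_{k\star})^*\tilde X_{k\star}$, so $U_\star$ and $V_\star$ are admissible realizations of $U_{\tilde Y_{k\star}1}$ and $V_{\tilde X_{k\star}1}$. Passing to the limit in the vanishing projected-gradient identities yields (\ref{XYkKKTstar}).

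For (\ref{WKKTcon}), at every iteration $W_k^{(l+1)}$ is the exact minimizer of the strictly convex quadratic program (\ref{SunkWETP-optim2}), so the KKT identities (\ref{subquKKT}) hold with $Z_k=Z_k^{(l+1)}$. The on-support equations $(W_k^{(l+1)})_{ij}=(G_k)_{ij}$ pass directly to the limit, yielding $P_\Omega(\mathcal{W}_\star)=P_\Omega(\mathcal{G})$. For the off-support equations, strong convexity of $g_k$ (its Hessian $K_{\mu\rho_2}$ is positive definite) combined with the summability $\sum_l(f_0^{(l)}-f_0^{(l+1)})<\infty$ yields $W_k^{(l+1)}-W_k^{(l)}\to 0$, so $W_k^{(l_j+1)}\to W_{k\star}$ and similarly $Z_k^{(l_j+1)}=\tilde X_k^{(l_j+1)}\tilde Y_k^{(l_j+1)}\to\tilde X_{k\star}\tilde Y_{k\star}$ (after a further subsequence). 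Taking the limit in (\ref{subquKKT}), or equivalently in the closed form (\ref{Wkiom}) which depends continuously on $Z_k^{(l+1)}$, then gives (\ref{WKKTcon}).

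The main technical obstacle is the SVD-continuity step, since singular vectors can fail to converge when singular values coalesce. The cleanest fix is to rewrite $\|\nabla_{\tilde X_k}h_k\cdot U_{\tilde Y_k1}^{(l)}\|_F^2=\tr\bigl(\nabla_{\tilde X_k}h_k\,P_{\tilde Y_k}^{(l)}(\nabla_{\tilde X_k}h_k)^*\bigr)$, where $P_{\tilde Y_k}^{(l)}:=U_{\tilde Y_k1}^{(l)}(U_{\tilde Y_k1}^{(l)})^*$ is the orthogonal projector onto the range of $\tilde Y_k^{(l)}(\tilde Y_k^{(l)})^*$; this projector is continuous in $\tilde Y_k^{(l)}$ whenever the tubal rank is stable across the limit, a mild assumption consistent with the rank-adjustment step of TCTF2R.
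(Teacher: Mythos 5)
Your proposal follows essentially the same route as the paper's own proof: it telescopes the descent estimate of Proposition~\ref{Prop5} against the monotone convergence of $\{f_0^{(l)}\}$ from Theorem~\ref{GlobConvergM} to drive the projected gradients to zero, passes to the limit along the hypothesized subsequence, and obtains (\ref{WKKTcon}) from the exact KKT system (\ref{subquKKT}) satisfied at every iteration. The only difference is that you explicitly handle the convergence of the singular-vector factors $U_{\tilde Y_k1}^{(l)}$ and $V_{\tilde X_k1}^{(l)}$ via compactness and the projector reformulation --- a point the paper's proof passes over silently --- so your write-up is, if anything, more careful at that step.
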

\begin{proof}
By Theorem \ref{GlobConvergM}, we have $\{f_0(\mathcal{X}^{(l)},\mathcal{Y}^{(l)},\mathcal{W}^{(l)})\}$  monotonically converges to $\tilde f_{\star}:=f(\mathcal{X}_{\star},\mathcal{Y}_{\star},\mathcal{W}_{\star})$ on the whole sequence. Therefore, it holds that $$f_0(\mathcal{X}^{(l)},\mathcal{Y}^{(l)},\mathcal{W}^{(l)})-f_0(\mathcal{X}^{(l+1)},\mathcal{Y}^{(l+1)},\mathcal{W}^{(l)})\rightarrow 0$$ as $l\rightarrow\infty$. Moreover, for every $k\in [m_3]$, it holds that
\begin{equation}\label{XYlimit}
\begin{array}{l}
f_0(\mathcal{X}^{(l)},\mathcal{Y}^{(l)},\mathcal{W}^{(l)})-f_0(\mathcal{X}^{(l+1)},\mathcal{Y}^{(l+1)},\mathcal{W}^{(l)})\\
\geq \displaystyle\frac{1}{m_3}\big(h_k(\tilde X_k^{(l)},\tilde Y_k^{(l)})-h_k(\tilde X_k^{(l+1)},\tilde Y_k^{(l+1)})\big).
\end{array}
\end{equation}
 Then, by letting $j\rightarrow \infty$ in (\ref{XYlimit}), the desired result (\ref{XYkKKTstar}) follows from Proposition \ref{Prop5}. Moreover, from the design of TCTF2R Algorithm, we see that $ Z^{(l_j)}_k\rightarrow \mathcal{X}_\star*\mathcal{Y}_\star(:,:,k)$ as $j\rightarrow \infty$, hence (\ref{WKKTcon}) follows from the KKT condition of (\ref{WETP-optim20}).
\end{proof}

\begin{remark}
Since model (\ref{TETP-optim}), and hence (\ref{kXYETP-optim20}) is nonconvex, we are only able to establish convergence to a stationary point under a suitable assumption. In addition, it is obvious that ${\rm rank}(\hat W_{k\star})\leq r$ for $k\in [m_3]$. It is worth noting that, due to the non-uniqueness of the singular value decomposition of the matrix, it is usually difficult to find specific $\tilde X_{k\star}$ and $\tilde Y_{k\star}$ satisfying  $\hat W_{k\star}=\tilde X_{k\star}\tilde Y_{k\star}$ and $ \tilde X_k^{(l_j)}\rightarrow\tilde X_{k\star},~ \tilde Y_k^{(l_j)}\rightarrow\tilde Y_{k\star}$ as $j\rightarrow \infty$. However, from the proof of Theorem \ref{ThKKT}, we see that, for given arbitrarily small $\varepsilon>0$, under the given condition, there exists a positive integer $\bar j$ such that
\begin{equation}\label{AKKTstar}
\left\{
\begin{array}{l}
\|\nabla_{\tilde X_k}h_k(\tilde X_{k\star},\tilde Y_{k\star},\tilde W_{k\star})U_{\tilde Y_{k\star}1}\|_F\leq \varepsilon,\\
\|(V_{\tilde X_{k\star}1})^*\nabla_{\tilde Y_k}h_k(\tilde X_{k\star},\tilde Y_{k\star},\tilde W_{k\star})\|_F\leq \varepsilon,\\
(W_{k\star}K_{\mu\rho_2}-\mathcal{X}_\star*\mathcal{Y}_\star(:,:,k))_{ij}=0,~~\forall~(i,j)\in \Omega^C_k,\\
P_{\Omega}(\mathcal{W}_\star)=P_{\Omega}(\mathcal{G})
\end{array}
\right.
\end{equation}
for every $k\in [m_3]$ and $j\geq \bar j$, which is regarded as an approximation KKT systems of (\ref{kXYETP-optim20}) and (\ref{WETP-optim20}). In particular, when ${\rm rank }(\hat W_{k\star})=r$, by the well-known perturbation analysis theory of matrix singular-values, we know that, the $(\tilde{X}_k^{(l)},\tilde{Y}_k^{(l)})$ mentioned above satisfies ${\rm rank}(\tilde{X}_k^{(l)})={\rm rank}(\tilde{Y}_k^{(l)})=r$ for $l\geq \bar l$, whenever $\tilde{X}_k^{(l)}\tilde{Y}_k^{(l)}\rightarrow \hat W_{k\star}$, which implies $U_{\tilde Y_k1}^{(l)}=U_{\tilde Y_k}^{(l)}$ and $V_{\tilde X_k1}^{l}=V_{\tilde X_k}^{(l)}$ are unitary matrices for $l\geq \bar l$. In this case, the considered systems (\ref{XYkKKTstar}) and (\ref{AKKTstar})  reduce the corresponding classic (approximation) KKT systems of (\ref{kXYETP-optim20}), respectively.
\end{remark}

\section{Experiments}\label{Experiments}
In this section, numerical experiments are presented to verify the performance of our algorithm. We use the normalized mean absolute error (NMAE) in the missing values as a metric of the recovered data.
The NMAE is defined as follows
$$
\mbox{NMAE}=\frac{\sum_{(i,j)\notin \Omega} |X_{ij}-\hat{X}_{ij}|}{\sum_{(i,j)\notin \Omega} |X_{ij}|},
$$
where $X$ and $\hat{X}$ are original data and estimated data, respectively. We evaluate the performance on random missing pattern, i.e., each entry of the given traffic matrix (TM) data is uniformly and randomly missing, and reconstructing the TM 10 times. The results presented show the mean NMAE.

\begin{figure}[h]
\centering
  \includegraphics[width = 1.0\textwidth]{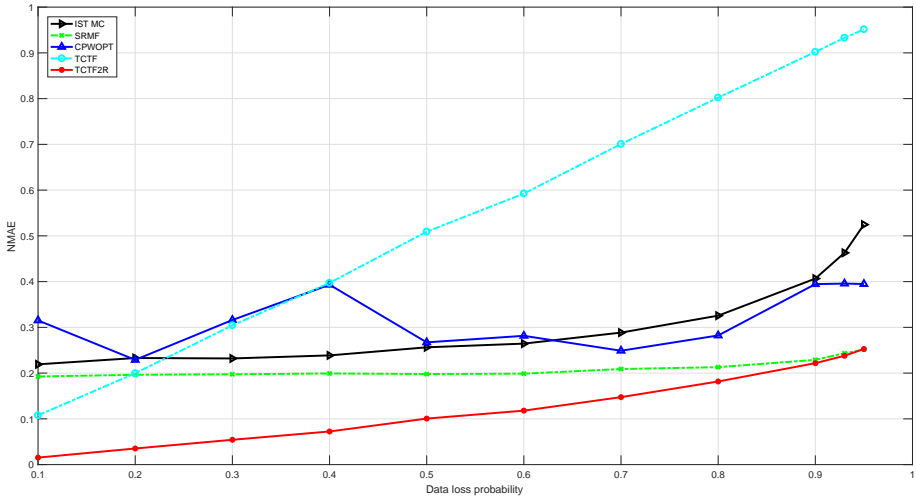}
  \caption{NMAE under random missing from Abilene Data}
  \label{A.NMAE}
\end{figure}

\begin{figure}[h]
\centering
  \includegraphics[width = 1.0\textwidth]{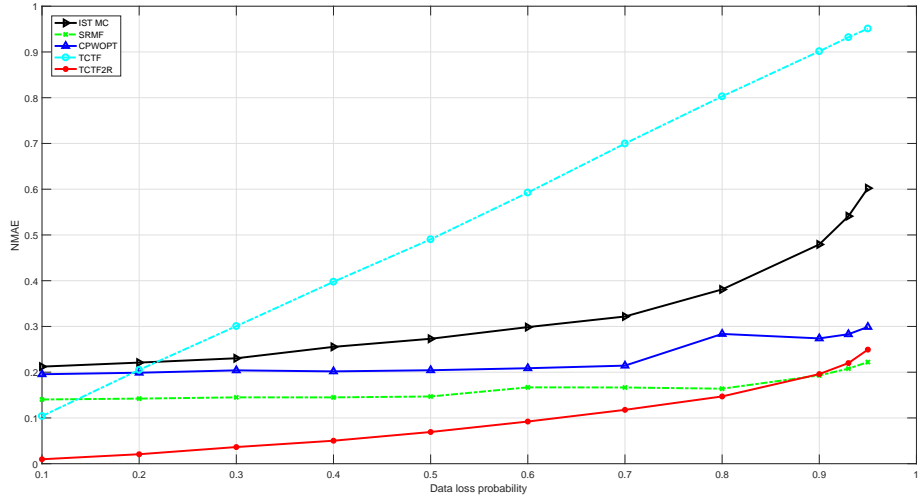}
  \caption{NMAE under random missing from G\'{E}ANT Data}
  \label{G.NMAE}
\end{figure}

To verify the performance of the proposed method, we compare the NMAE of several data completion methods.
The first is Matrix completion (IST MC) by the nuclear norm minimization \cite{CR09,Majumdar20}, which does not consider the spatio-temporal structure of the internet traffic matrix. The second is Sparsity regularized matrix factorization (SRMF) method \cite{RZWQ12,ZRWQ09}, which is a low-rank matrix completion approach with a spatio-temporal regularization. The third approach is CP-WOPT, which is a CP tensor completion method with a spatio-temporal regularization \cite{ADKM11,ZZXC15}. The last one is the normal TCTF method \cite{ZLLZ18}.
In all experiments, we set $\mbox{tol=1e-6}$. The platform is Matlab 2016b under Windows 10 on a PC of a 2.71GHz CPU and 32GB memory.

\begin{figure}
$$
\begin{array}{c}
\includegraphics[width=1.0\textwidth]{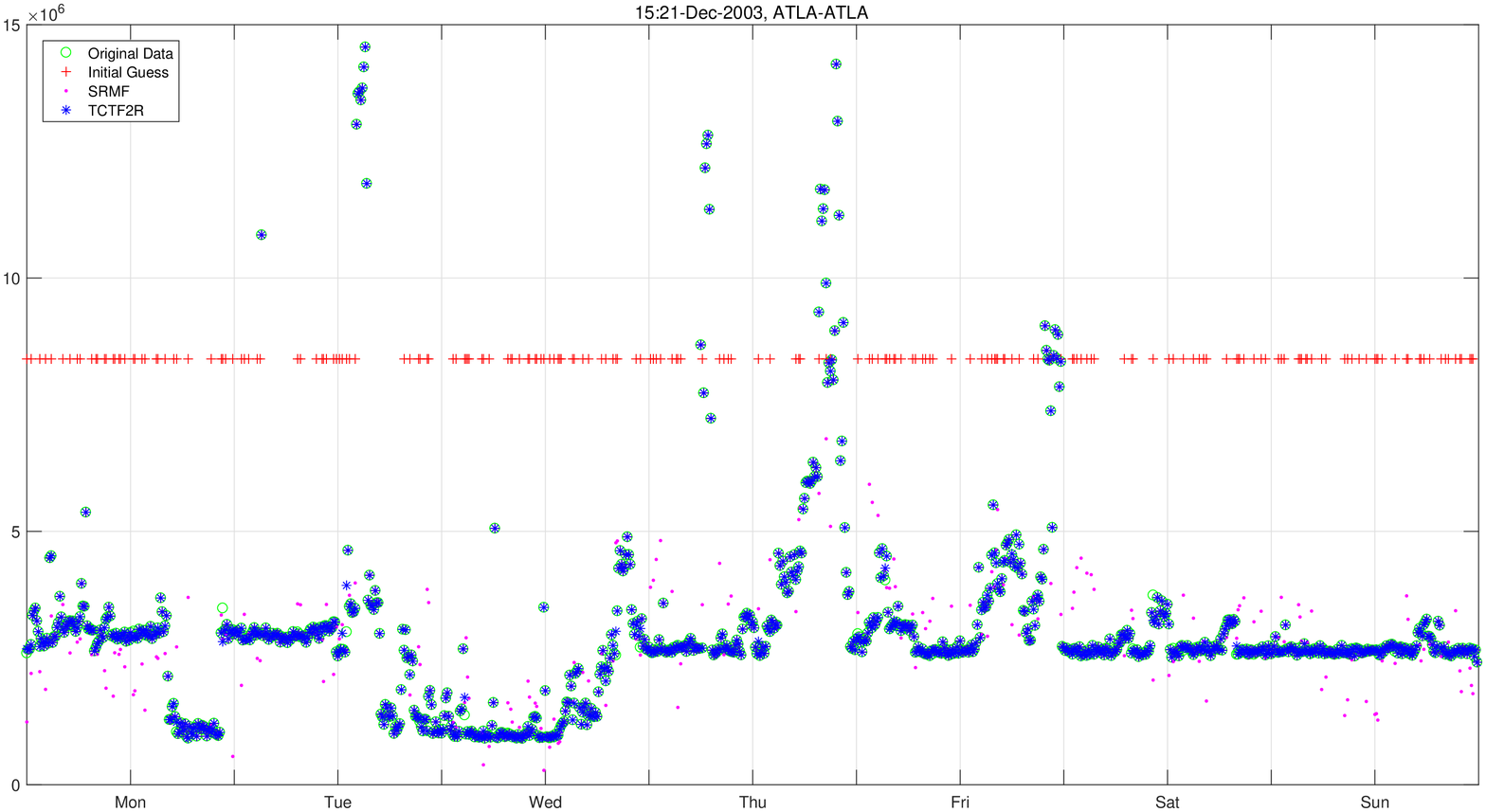}\\
(i)\mbox{The data loss probability is 20\%} \\
\includegraphics[width=1.0\textwidth]{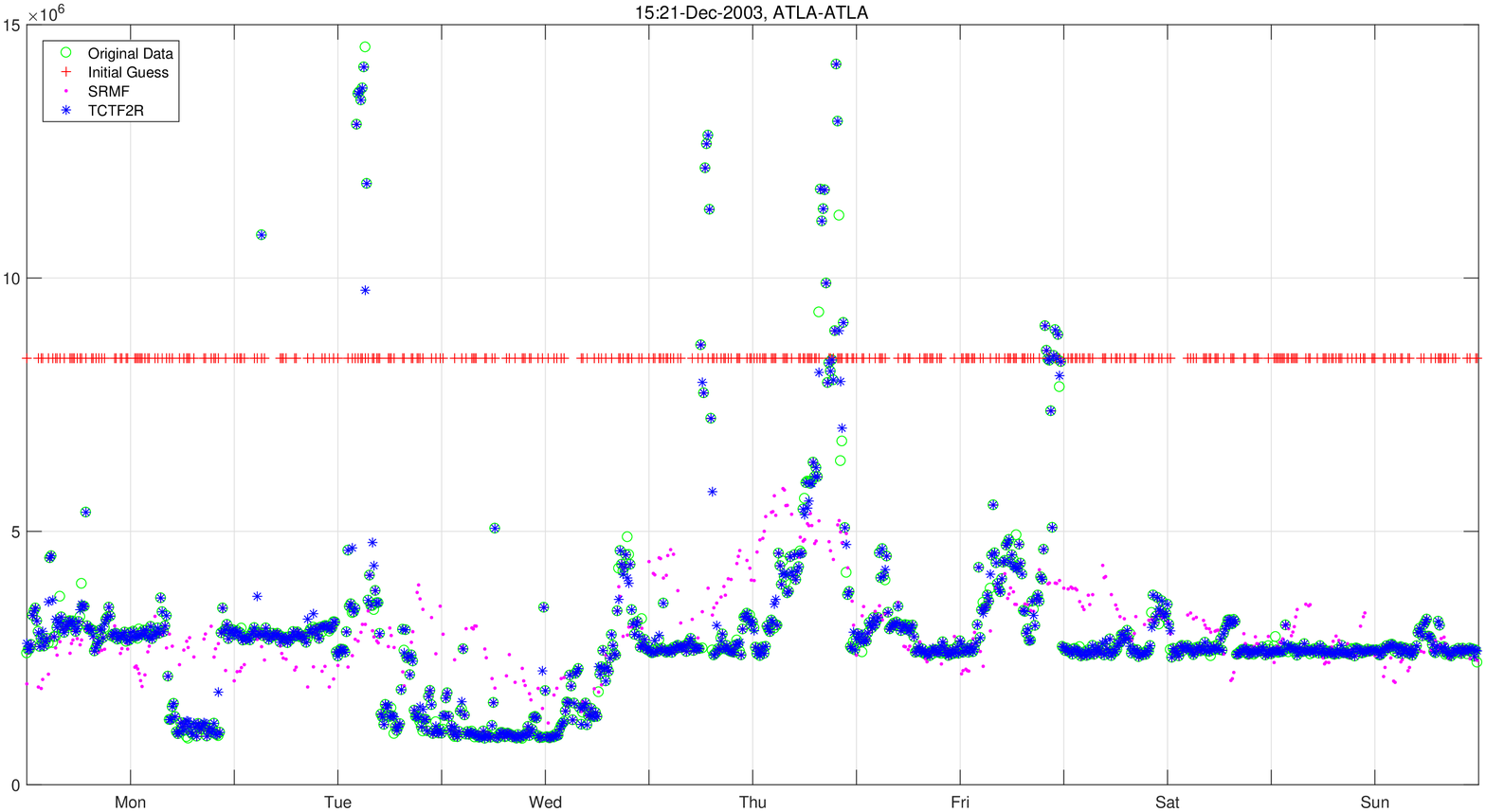}\\
(ii) \mbox{The data loss probability is 40\%}
%\includegraphics[width=1.0\textwidth]{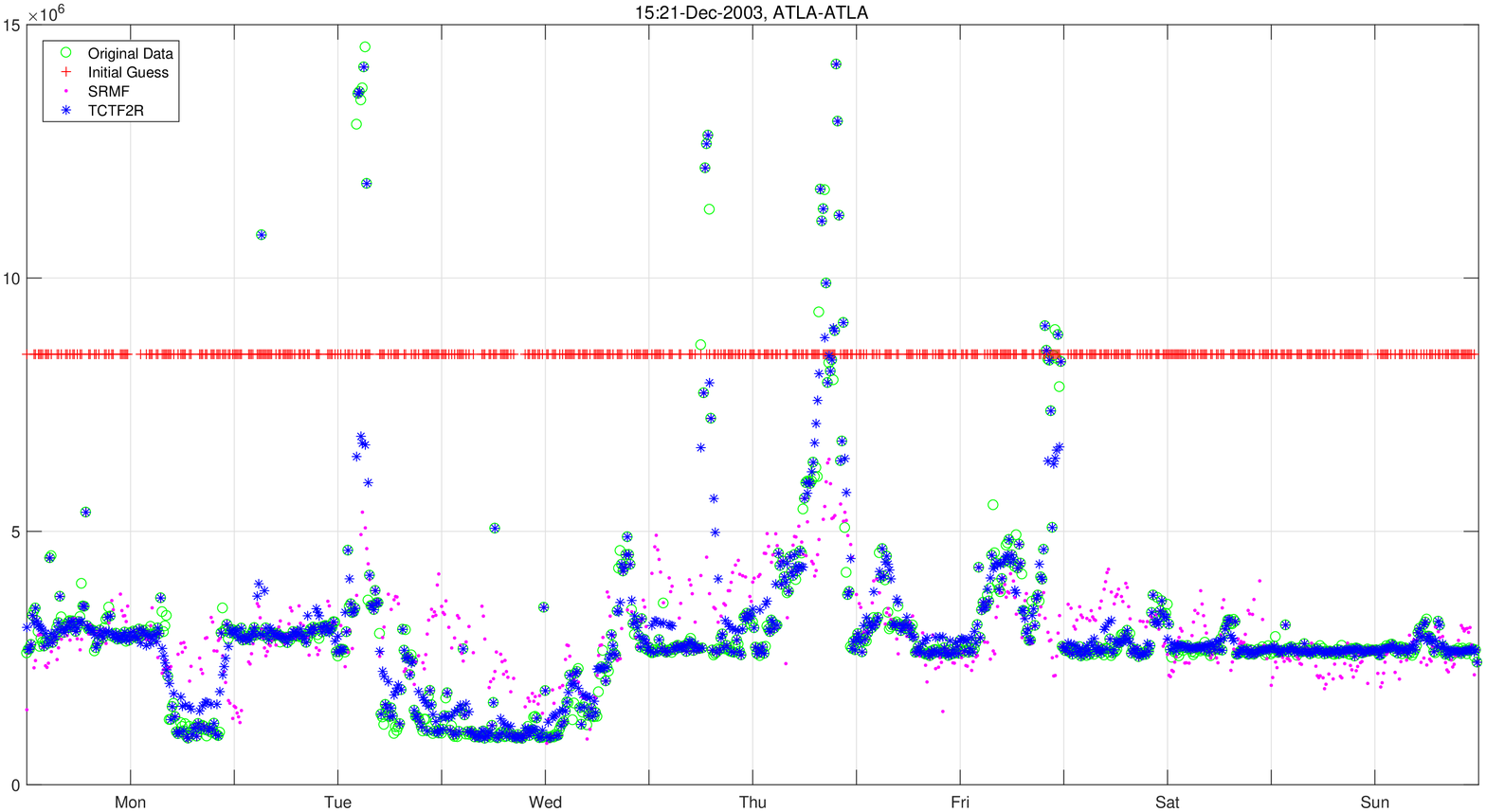}\\
%(iii) \\
%\includegraphics[width=1.0\textwidth]{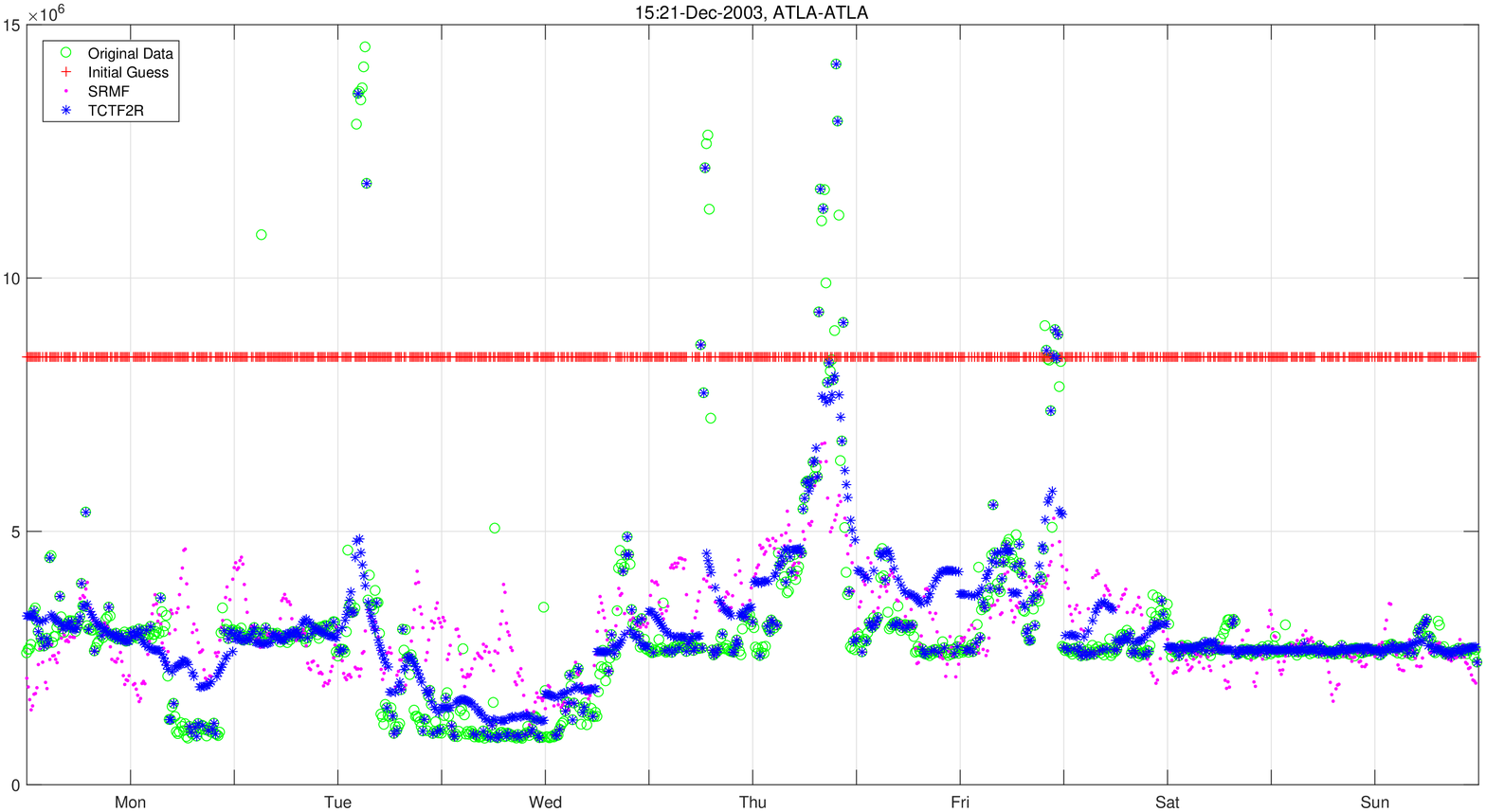}\\
%(iv)
\end{array}
$$
\caption{Recovery of the ATLA-ATLA OD pair: SRMF v.s. TCTF2R}.
\end{figure}

\begin{figure}
$$
\begin{array}{c}
%\includegraphics[width=1.0\textwidth]{SRMF-TCTF-sr0.8.eps}\\
%(i) \\
%\includegraphics[width=1.0\textwidth]{SRMF-TCTF-sr0.6.eps}\\
%(ii) \\
\includegraphics[width=1.0\textwidth]{SRMF-TCTF-sr0.4.eps}\\
(iii)\mbox{The data loss probability is 60\%}\\
\includegraphics[width=1.0\textwidth]{SRMF-TCTF-sr0.2.eps}\\
(iv) \mbox{The data loss probability is 80\%}
\end{array}
$$
\caption{Recovery of the ATLA-ATLA OD pair: SRMF v.s. TCTF2R}.
\end{figure}

Our experiments are tested on two real-world traffic dataset. First, we consider the widely used Abilene dataset \cite{Abilene04}, in which there are 11 routers and hence 121 OD pairs. For each OD pair, a count of network traffic flow is recorded for every 10 minutes in a week from Dec 15, 2003 to Dec 21, 2003. Thus, there are $7\times 24\times 60/10 = 1008$ numbers for each OD pair. In this way, we obtain an internet traffic matrix $X$ with size 121-by-1008. The second real-world dataset is the G\'{E}ANT traffic dataset \cite{UQLB06} in which there are 23 routers and hence 529 OD pairs. For each OD pair, a count of network traffic flow is recorded for every 15 minutes in a day. Hence there are $24\times60/15 = 96$ data in a day for an OD pair. We choose one week traffic data collected from March 26, 2005 to April 2, 2005. In this way, we obtain an internet traffic matrix $X$ with size 529-by-672.

 Firstly, we investigate the performance of the tested approaches on the Abilene data with the data loss probability ranging from 10\% to 95\%. We compute the recovered internet traffic data and calculate the corresponding NMAEs, which are illustrated in Figure 1. As we can see, the proposed method (TCTF2R) significantly outperforms the normal TCTF method under variety of missing rates. Similarly, spatio-temporal regularized SRMF method performs better than non-regularized IST MC method. This phenomenon implies
that the spatio-temporal structure in the internet traffic matrix are valuable and has
been used to improve the recovery accuracy. Further, when the data loss probability is less than 90\%, the
proposed TCTF2R performs better than all other approaches. When 90\% data have been lost, the normalized mean absolute error (NMAE) of TCTF2R method is about 0.22, which is slightly better than that of SRMF method. SRMF is just behind TCTF2R, and achieves robust performance over the whole loss range. Further, we apply these approaches on the G\'{E}ANT traffic dataset, and similar results could be observed from Figure 2.

In order to check the real recovery accuracy, we illustrate the recovered data for the ATLA-ATLA OD pair of Abilene data, which is the tubal vector $(1,1,:)$ of the internet traffic tensor with size $11\times11\times1008$. As shown in Figures 3-4, TCTF2R method can recover the missing data in high accuracy, which is much better than SRMF method. When the data lost probability is less than 40\%, TCTF2R method can recover the data in very high accuracy. But for the SRMF method, there are lots of points outside the green circle, which means that SRMF method can not accurately recover the original data. As we can see from the Fig. 4, with the increase of data loss rate, the accuracy of recovery is further reduced. However, compared with SRMF method, the TCTF2R method still has better performance. According to these fact, we illustrate the significance of the proposed tensor completion method for the recovery of internet traffic data.

\section{Conclusion}\label{Conclusion}
The internet data recovery model has its own special structure. How to decompose high-order tensors according to the detachable structure of the model has become the key issue of designing efficient algorithms for internet network traffic data recovery. In this paper, t-product and t-SVD of tensors and the closely related tensor tubal rank were used to establish a low-rank optimization model for internet traffic data recovery. Furthermore, using rapid discrete Fourier transform, the established model was transformed into a structured model with separability, and the temporal stability and periodicity features in internet traffic data were fully reflected in this model. Based upon this, with the help of the generalized inverse matrix, an easy-to-operate and relatively effective algorithm was proposed. Some convergence properties of the proposed algorithm was analysed. The numerical simulations on widely used real-world internet datasets, e.g., Abilene dataset and G\'{E}ANT traffic dataset, showed the excellent performance of the proposed method, especially when the data loss rate is less than 80\%, the recovery effect is more outstanding, and the relative error rate decreases rapidly as the data loss rate decreases.
Due to the limitation of our computing platform, the algorithm proposed in this paper is not implemented in parallel. However, the detachable separation characteristics of the model in this paper provided the possibility of designing more efficient parallel algorithms for solving internet network traffic data recovery in the future.

\section*{Acknowledgment}
C. Ling and G. H. Yu were supported in part by National Natural Science Foundation of China (Nos. 11971138 and 11661007) and Natural Science Foundation of Zhejiang Province (Nos. LY19A010019 and LD19A010002).


\begin{thebibliography}{99}

\bibitem{Abilene04} The Abilene Observatory Data Collections. Accessed: May 2004.
[Online]. Available: http://abilene.internet2.edu/observatory/data-collections.html.

\bibitem{ADKM11} E. Acar, D. M. Dunlavy, T. G. Kolda and M. M\o rup, ``Scalable tensor factorizations for incomplete data'', {\sl Chemometrics and Intelligent Laboratory Systems \bf 106} (2011), 41-56.

\bibitem {ACRUW06} D. Alderson, H. Chang, M. Roughan, S. Uhlig and W. Willinger, ``The many facets of internet topology and traffic'', {\sl   Networks and Heterogeneous Media \bf 1(4)} (2006), 569-600.

\bibitem{AMDJ16} M. T. Asif, N. Mitrovic, J. Dauwels and P. Jaillet, ``Matrix and tensor based methods for missing data estimation in large traffic networks'', {\sl IEEE Transactions on Intelligent Transportation Systems \bf 17} (2016), 1816-1825.

\bibitem{BT13} A. Beck and L. Tetruashvili, ``On the convergence of block corrdinate descent type methods'', {\sl SIAM Journal on Optimization \bf 23} (2013), 2037-2060.

\bibitem{Ber08} Dennis. S. Bernstein, ``Matrix Mathematics: theory, facts, and formulas'', 2th edition, Princeton University Press, Princeton and Oxford, 2008.

\bibitem {Bert99} D. P. Bertsekas, ``Nonlinear Programming'', 2th edition, Athena Scientific, Belmont, MA, 1999.


\bibitem{B83} D. H. Brandwood, ``A complex gradient operator and its application in adaptive array theory'', {\sl IEE Proceedings H: Microwaves, Optics and Antennas \bf 130(1)} (1983), 11-16.

\bibitem{CCS10} J. F. Cai, E. J. Cand\`{e}s and Z. Shen, ``A singular value thresholding algorithm for matrix completion'', {\sl SIAM Journal on Optimization \bf 20(4)} (2010), 1956-1982.

\bibitem{CR09} E. J. Cand\`{e}s and B. Recht, ``Exact matrix completion via convex optimization'', {\sl Foundations of Computational Mathematics \bf 9(6)} (2009), 717-772.

\bibitem{CC70} J. D. Carroll and J. J. Chang, ``Analysis of individual differences in multidimensional scaling via an $n$-way generalization of `Eckart-Young' decomposition'', {\sl Psychometrika \bf 35 (3)} (1970), 283-319.

\bibitem {Cah98} R. S. Cahn, ``Wide Area Network Design'', San Mateo, CA: Morgan Kaufman, 1998.


\bibitem{CSZZC19} B. Chen, T. Sun, Z. Zhou, Y. Zeng, L. Cao, ``Nonnegative tensor completion via low-rank Tucker decomposition: model and algorithm'', {\sl IEEE Access \bf 7} (2019) 95903-95914.


\bibitem{CQZXH14} Y. C. Chen, L. Qiu, Y. Zhang, G. Xue and Z. Hu, ``Robust network compressive sensing'', {\sl Proceedings of the 20th Annual International Conference on Mobile Computing and Networking} (2014), 545-556.

\bibitem {DH15} C. Da Silva and F. J. Herrmann, ``Optimization on the hierarchical Tucker manifold-applications to tensor completion'', {\sl Linear Algebra and its Applications \bf481} (2015), 131-173.

\bibitem{Do06} D. L. Donoho, ``Compressed sensing'', {\sl IEEE Transactions on Information Theory \bf52} (2006), 1289-1306.

\bibitem{DCYG13} R. Du, C. Chen, B. Yang and X. Guan, ``VANET based traffic estimation: A matrix completion approach'', {\sl IEEE Global Communications Conference} (2013), 30-35.


\bibitem{GRY11} S. Gandy, B. Recht and I. Yamada, ``Tensor completion and low-$n$-rank tensor recovery via convex optimization'', {\sl Inverse Problems \bf 27(2)} (2011), 025010.

\bibitem{GV13} G. H. Golub and C. F. Van Loan, ``Matrix Computations'', 4th edition, Johns Hopkins University Press, Baltimore, 2013.

\bibitem{GC12} G. G\"{u}rsun and M. Crovella, ``On traffic matrix completion in the internet'', {\sl Proceedings of the ACM SIGCOMM Internet Measurement Conference} (2012), 399-412.


\bibitem{H70} R. A. Harshman, ``Foundations of the PARAFAC procedure: Models and conditions for an `explanatory' multi-modal factor analysis'', {\sl  UCLA Work Papers Phonetics \bf 16 (1)} (1970), 1-84.

\bibitem{HJ94} Roger A. Horn, Charles R. Johnson, ``Topics in Matrix Analysis'', Cambridge University Press, 1994.

\bibitem{KM11} M. E. Kilmer and C. D. Martin, ``Factorization strategies for third-order tensors'', {\sl Linear Algebra and its Applications \bf 435} (2011), 641-658



\bibitem{KBHH13} M. E. Kilmer, K. Braman, N. Hao, and R. C. Hoover. ``Third-order tensors as operators on matrices: a theoretical and computational framework with applications in imaging'', {\sl SIAM Journal on Matrix Analysis and Applications \bf 34} (2013), 148-172.

\bibitem{KB09} T. G. Kolda and B. W. Bader, ``Tensor decompositions and applications'', {\sl SIAM Review. \bf 51} (2009), 455-500.

\bibitem{LPCDKT03} A. Lakhina, K. Papagiannaki, M. Crovella, C. Diot, E. D. Kolaczyk and N. Taft, ``Structural analysis of network traffic flows'', {\sl ACM SIGMETRICS Performance Evaluation Review}  (2004), 61-72.


\bibitem{LDGL13} Y. Liao, W. Du, P. Geurts and G. Leduc, ``DMFSGD: A decentralized matrix factorization algorithm for network distance prediction'', {\sl IEEE/ACM Transactions on Networking \bf 21 (5)} (2013) 1511-1524

\bibitem{Majumdar20} A. Majumdar, ``Matrix Completion via Thresholding", MATLAB Central File Exchange. Retrieved April 2, 2020.


\bibitem{MG13} M. Mardani and G. B. Giannakis, ``Robust network traffic estimation via sparsity and low rank'', {\sl IEEE International Conference on Acoustics, Speech and Signal Processing} (2013), 4529-4533.

\bibitem {M20} E. Moore, ``On the reciprocal of the general algebraic matrix'', {\sl Bulletin of the American Mathematical Society \bf26} (1920), 394-395.

\bibitem{NJG13} L. Nie, D. Jiang, and L. Guo, ``A power laws-based reconstruction approach to end-to-end network traffic'', {\sl Journal of Network and Computer Applications \bf 36(2)} (2013) 898-907.

\bibitem{P55} R. Penrose, ``A generalized inverse for matrices'', {\sl Mathematical Proceedings of the Cambridge Philosophical Society \bf 51} (1955) 406-413.

\bibitem{RTZ03} M. Roughan, M. Thorup and Y. Zhang, ``Traffic engineering with estimated traffic matrices'', {\sl Proc. ACM IMC 2003}, 248-258.

\bibitem{RZWQ12} M. Roughan, Y. Zhang, W. Willinger and L. Qiu, ``Spatio-temporal compressive sensing and Internet traffic matrices (extended version)'', {\sl IEEE/ACM Transactions on Networking \bf 20} (2012), 662-676.

\bibitem{SLH19} K. Shang, Y. F. Li and Z. H. Huang, ``Iterative $p$-shrinkage thresholding algorithm for low Tucker rank tensor recovery'', {\sl Information Sciences \bf482} (2019), 374-391.



\bibitem{TWSJR16} H. Tan, Y. Wu, B. Shen, P. J. Jin and B. Ran, ``Short-term traffic prediction method based on dynamic tensor completion'', {\sl IEEE Transactions on Intelligent Transportation Systems \bf 17} (2016), 2123-2133.

\bibitem{TYFWR13} H. Tan, Z. Yang, G. Feng, W. Wang and B. Ran, ``Correlation analysis for tensor-based traffic data imputation method'', {\sl Procedia-Social and Behavioral Sciences \bf 96} (2013), 2611-2620.


\bibitem{TH77} R. C. Thompson, ``Inertial properties of eigenvalues II'', {\sl Journal of Mathematical Analysis and Applications \bf 58} (1977), 572-577.

\bibitem{Tucker66} L. Tucker, ``Some mathematical notes on three-mode factor analysis'', {\sl Psychometrika \bf 31(3)} (1966), 279-311.

\bibitem{TR13} P. Tune and M. Roughan, ``Internet traffic matrices: A Primer'', in: H. Haddadi and O. Bonaventure, eds., {\sl Recent Advances in Networking \bf 1} (2013), 1-56.

\bibitem {TR15} P. Tune and M. Roughan, ``Spatiotemporal traffic matrix synthesis'', {\sl Proceedings of the 2015 ACM Conference on
Special Interest Group on Data Communication} (2015), 579-592.

\bibitem{UQLB06} S. Uhlig, B. Quoitin, J. Lepropre and S. Balon, ``Providing public intradomain traffic matrices to the research community'', {\sl ACM SIGCOMM Computer Communication Review \bf 36} (2006), 83-86.



\bibitem{V96} Y. Vardi, ``Network tomography: Estimating source-destination traffic intensities from link data'', {\sl Journal of the American Statistical Association, \bf 91(433)} (1996), 365-377.


\bibitem{XLWXWZ18} K. Xie, X. Li, X. Wang, G. Xie, J. Wen and D. Zhang, ``Graph based tensor recovery for accurate Internet anomaly detection'', {\sl IEEE INFOCOM 2018-The 37th Annual IEEE International Conference on Computer Communications} 2018, 1502-1510.


\bibitem{XPWXWCZQ18} K. Xie, C. Peng, X. Wang, G. Xie, J. Wen, J. Cao, D. Zhang and Z. Qin, ``Accurate recovery of internet traffic data under variable rate measurements'', {\sl IEEE/ACM Transactions on Networking \bf 26} (2018) 1137-1150.

\bibitem{XTWXWZ19} K. Xie, J. Tian, X. Wang, G. Xie, J. Wen and D. Zhang, ``Efficiently inferring top-$k$ elephant flows based on discrete tensor completion'', {\sl IEEE INFOCOM 2019 - The 38th Annual IEEE International Conference on Computer Communications} 2019, 2170-2178.

\bibitem{XWWCXOWCZ19} K. Xie, X. Wang, X. Wang, Y. Chen, G. Xie, Y. Ouyang, J. Wen, J. Cao and D. Zhang, ``Accurate recovery of missing network measurement data with localized tensor completion'', {\sl IEEE/ACM Transactions on Networking \bf 27(6)} 2019, 2222-2235.


\bibitem{X15} K. Xie, L. Wang, X. Wang, G. Xie et al., ``Sequential and adaptive sampling for matrix completion in network monitoring systems'', {\sl Proc. IEEE INFOCOM} (2015), 2443-2451.

\bibitem{XWWXWZCZ18} K. Xie, L. Wang, X. Wang, G. Xie, J. Wen, G. Zhang, J. Cao and D. Zhang, ``Accurate recovery of internet traffic data: A sequential tensor completion approach'', {\sl IEEE/ACM Transactions on Networking \bf 26} (2018), 793-805.

 \bibitem {WYZ12} Z. Wen, W. Yin, and Y. Zhang, ``Solving a low-rank factorization model for matrix completion by a nonlinear successive over-relaxation algorithm'', {\sl Mathematical Programming Computation \bf 4(4)} (2012), 333-361.

\bibitem{YHHH16} L. Yang, Z. H. Huang, S. L. Hu and J. Y. Han, ``An iterative algorithm for third-order tensor multi-rank minimization'', {\sl Computional Optimization and  Applications \bf 63} (2016), 169-202.


\bibitem{ZRLD05} Y. Zhang, M. Roughan, C. Lund and D. L. Donoho, ``Estimating point-to-point and point-to-multipoint traffic matrices: An information theoretic approach'', {\sl IEEE/ACM Transactions on Networking \bf 13(5)} 2005, 947-960.

\bibitem{ZRWQ09} Y. Zhang, M. Roughan, W. Willinger and L. Qiu, ``Spatio-temporal compressive sensing and internet traffic matrices'', {\sl ACM SIGCOMM Computer Communication Review - SIGCOMM '09 \bf 39} (2009), 267-278.

\bibitem{ZZXC15} H. Zhou, D. Zhang, K. Xie and Y. Chen, ``Spatio-temporal tensor completion for imputing missing internet traffic data'', {\sl IEEE 34th International Performance Computing and Communications Conference (IPCCC)} (2015).

\bibitem {ZZXW14} H. Zhou, D. F. Zhang, K. Xu and X. Y. Wang, ``Data reconstruction in internet traffic matrix'', {\sl Communications, China \bf 11} (2014), 1-12.

\bibitem{ZLLZ18} P. Zhou, C. Y. Lu, Z. C. Lin and C. Zhang, ``Tensor factorization for low-rank tensor completion'', {\sl IEEE Transactions on Image Processing \bf 3} (2018), 1152-1163.

\bibitem{ZLNLZ17} R. Zhu, B. Liu, D. Niu, Z. Li and H. V. Zhao, ``Network latency estimation for personal devices: A matrix completion approach'', {\sl IEEE/ACM Transactions on Networking \bf 25(2)} (2017), 724-737.
\end{thebibliography}
\end{document}